 \newcommand{\D}{\displaystyle}
\newtheorem{Theorem}{Theorem}[section]
\newtheorem{Lemma}{Lemma}[section]
\newtheorem{Proposition}{Proposition}[section]
\newtheorem{Remark}{Remark}[section]
\numberwithin{equation}{section} \numberwithin{figure}{section}
\def\D{\displaystyle}
\begin{document}

\title{Enhancing Population Persistence by a Protection Zone in a
Reaction-Diffusion Model with Strong Allee Effect}
\author{Yu Jin\footnote{
Department of Mathematics,
University of Nebraska-Lincoln,
Lincoln, NE, 68588, USA. Email: yjin6@unl.edu}, \,\,
Rui Peng\footnote{School of Mathematics and Statistics, Jiangsu Normal University, Xuzhou, 221116, Jiangsu, China.
Email: pengrui\_seu@163.com},\,\,
Jinfeng Wang\footnote{School of Mathematical Sciences and Y.Y. Tseng Functional Analysis Research Center, Harbin
Normal University, Harbin 150001, Heilongjiang, China. Email: jinfengwangmath@163.com
}
}
\date{}
\maketitle

\begin{abstract}
Protecting endangered species has been an important issue in ecology. We derive a reaction-diffusion model for a population in a one-dimensional bounded habitat, where the population is subjected to a strong Allee effect in its natural domain but obeys a logistic growth in a protection zone. We establish the conditions for population persistence and extinction via the principal eigenvalue of an associated eigenvalue problem and investigate the dependence of this principal eigenvalue on the location (i.e., the starting point and the length) of the protection zone. The results are used to design the optimal protection zone under different boundary conditions, that is, to suggest the starting point and length of the protection zone with respect to different population growth rate in the protection zone, in order for the population to persist in a long term.

\end{abstract}

{\bf Key words.} A reaction-diffusion model; strong Allee effect; protection zone; population persistence;
principal eigenvalue.

{\bf MSC2020:} 34B09; 35B40; 35K55;  92D25.

\textcolor{red}{}

\section{Introduction and our model}

\subsection{Introduction}

Endangered species or native species may be subjected to density decline due to various reasons such as climate change, habitat change, pollution,  predation, harvest, invasion of alien species, etc. Strategies or management plans have been designed in order to  maintain the species  and their habitats at a certain level so that the populations can continue to grow and influence the habitats and the associated ecosystems in a healthy manner. Establishing protection zones such as natural reserves has been considered as an effective method to
protect endangered species from extinction, or otherwise to slow
down the speed of its extinction; see e.g.,
\cite{Holt1977,Holt1984,Ami2005,Jerry2010,Loisel,Halpern}.

The Allee effect is a ubiquitous phenomenon in biology showing that the
population growth rate can be significantly small or even negative
when the population density or size is very low, due to e.g., the
difficulty in finding mates or defending against predators, and that
the population growth rate is also negative when the population density
or size exceeds a large number due to  the environment limit; see e.g.,
\cite{Allee1931,Courchamp,Gascoigne}.
In particular, it is referred to a strong Allee effect if the growth rate is negatively related to the population size or a weak Allee effect if the growth rate is positively related to the population size, when the population size is small.
This phenomenon has been observed, for instance,
for the west Atlantic cod (Ganus morhus)
(see e.g., \cite{Deroos}), Vancouver Island marmot (Marmota
vancouverensis) (see e.g., \cite{Brashares}), and
 marine species such as blue crab, oyster (see e.g., \cite{Gascoigne}).
The Allee effect has been increasingly studied in population
models in recent decades. The dynamics of a single species with Allee effect has been explored in e.g., \cite{Sunhan2020,Sun2020,Dupeng2019,Wangshi2020,Keitt,Wangshi2019-1,Wangshi2019-2,Shishivaji2006}; the dynamics of predator-prey models with Allee effect on the prey has been investigated in e.g., \cite{Cui2014,wang2016,Wangshiwei2011,Wangshiwei20111}.

 When a species is subjected to significant predation, harvest, or competition, establishing  protection zones, where its population can enter or exit freely but its predators, competitors, or human beings are prohibited from entering, allows the species to freely grow and be preserved within the protection zones. Population models have been developed to predict and evaluate population responses to management actions such as protection zones that affect growth, survival, and spread.
  Reaction-diffusion predator-prey models and competition models with protection zones for the prey or for the weak competitor were first investigated in a sequence of works by Du et al, in \cite{Dushi2006} for
 a Holling type II predator-prey system, in \cite{Duliang2008,GaoLiang} for the competition system, in \cite{Dupengwang2009} for a Leslie type predator-prey system, and in \cite{Dushi20062}
 for  predator-prey
systems with protection coefficients. Results there showed that the prey or the weak competitor can successfully survive with the other species if the size of the protection zone is large enough while situations may be complicated and the persistence of the species may depend on demographic and dispersal characteristics of all involved species as well as the habitat features if the size of the protection zone is small.
The effect of cross-diffusion  on the stationary problem (i.e., the problem for the steady state) was studied for  predator-prey systems with  protection zones in \cite{Oeda2011,Li2017} and for
a Lotka-Voltera competition system with a protection
zone in \cite{Wang2013}.
Predator-prey systems with protection zones for the prey were also studied in \cite{he2016,Tripathi2015} (with Beddington-DeAngelis type functional response), in \cite{Chang2013} (with a prey refuge where a constant amount of preys are protected from being predated),
and in \cite{Cui2014} (with
a protection zone and strong Allee effect for the prey). For a single species living in a habitat where a protection zone is set up in a natural domain where the population is subjected
to extra removal due to external reasons, its dynamics was studied in e.g., \cite{Fan2001,Zou2011} (in a temporally constant environment), in \cite{Dieudu2016} (in
random environments), and in \cite{Cui2017} (with spatially
heterogeneous harvesting quota). In particular,
when a single species was subjected to strong
Allee effect outside of the protection zones, reaction-diffusion models were applied to investigate the role of the length
and the structure (one single connected patch or separated patches)
of the protection zones on species spreading in \cite{Dupeng2019}, as well as
the effect of protection zones on the
long-term behaviors and spread of a species with free boundary conditions in
\cite{Sunhan2020,Sun2020}.

In this work, we are interested in the effect of the protection zone on the long-term dynamics of an endangered species with spatial dispersal and how the protection zone can enhance population persistence of the species. Our main purpose is to suggest strategies for the placement of an optimal protection zone in order for the population to persist when the initial population density is small.
Note that it may be very difficult for an endangered species to recover when its density is sufficiently low. We assume that the population is subjected to a strong Allee effect in its natural domain so that such a species will die out if its initial population size is small. To help the population persist,  a protection zone is set up in its domain and in this zone the population can keep growing to its carrying capacity.
We will study a reaction-diffusion system that describes the dynamics of the population in such a habitat. Previous studies have discovered that a large size of the protection zone can usually help population persist (see e.g., \cite{Dushi2006,Duliang2008,Dupengwang2009,Fan2001,Zou2011}). We will establish  population persistence and extinction via the principal eigenvalue of the eigenvalue problem associated with the linearized system at the trivial solution. Then we will derive the precise influence of the protection zone, specifically, the starting point and the length of the protection zone, on the persistence conditions under different boundary conditions via the investigation of the dependence of the principal eigenvalue on these factors. Based on these results, we will then design the optimal location of the protection zone in each case.

\subsection{Our model}

To describe the spatiotemporal evolution of a single species that is subjected to the strong Allee effect growth, we consider a reaction-diffusion equation on a one-dimensional bounded habitat $(0,L)$:
\begin{equation}\label{alleesingle}
u_t=u_{xx}+g(u),\quad 0<x<L,\;\;t>0,
\end{equation}
where $u(t,x)$ is the density of the population at location $x$ at time $t$, $L$ is the total length of the habitat represented by the interval $[0,L]$. The growth rate $g(u)$ satisfies the following basic technical assumptions, which are similar to those in \cite{CosnerContrall,Shishivaji2006}:
\begin{enumerate}
 \item [{\bf (A1)}] $g\in C^1([0,+\infty))$, $g'(0)<0 =g(0)=g(a)=g(1)$ with $0<a<1$, $g(u)$ is positive for
  $a<u<1$, and $g(u)$ is negative otherwise.
 \end{enumerate}
Here the local carrying capacity of $g$ is rescaled as $1$, and $a$ is the local threshold value for the extinction/persistence of
the population, which is also known as the sparsity constant. The most important feature of \eqref{alleesingle} is that
 it admits bistable steady states, and different initial conditions can lead to different asymptotic
behaviors of the solutions \cite{Wangshi2019-2}. Especially, $\lim\limits_{t\rightarrow\infty}u(t,x)=0$ provided that the initial quantity is below  $a$.
   Hence, the persistence is always conditional.

Another universal acceptable growth for a single species is the logistic type, which leads to the following problem:
\begin{equation}\label{logsitcsingle}
u_t=u_{xx}+f(u),\quad 0<x<L,\;\;t>0,
\end{equation}
where the growth rate $f(u)$ satisfies the following basic technical assumptions, which are similar to those in \cite{Shishivaji2006}:
\begin{enumerate}
 \item [{\bf (A2)}] $f\in C^1([0,+\infty))$, $f'(0)>0 =f(0)=f(1)$, $f(u)$ is positive for
  $0<u<1$, and $f(u)$ is negative otherwise.
 \end{enumerate}
For simplicity, the local carrying capacity of $f$ is also rescaled as $1$.
 It is well known that all solutions of problem \eqref{logsitcsingle}
with non-negative and not identically zero initial data will converge to the steady state $u=1$ as $t\rightarrow +\infty$.
It leads to an unconditional persistence of the population for all initial condition.

Typical examples of strong Allee effect growth functions and  logistic growth functions are
\begin{equation}\label{growthfunex}
g(u)=ru(1-u)(u-a),\quad f(u)=ru(1-u),
\end{equation}
 where $r>0$ represents the intrinsic growth rate of the population, and $a\in (0,1)$ indicates that the population growth rate becomes negative when its density is below $a$.

To conserve an endangered species in a bounded region, one effective way is to introduce a protection zone within which the population's continuous growth is guaranteed. Mathematically, one may use the strong Allee effect growth to describe the dynamics of the endangered species in its natural domain, and a logistic type growth for its dynamics in the protection zone. As a result, we are led to consider the following model that describes the dynamics of a population in a one-dimensional habitat with a protection zone:
\begin{equation}\label{model1dge}
\begin{cases}
u_t=u_{xx}+F(u),&\, x\in (0,L),\ \ t>0,\\
u(0,x)=u_0(x)\geq 0, &\, x\in (0,L),
\end{cases}
\end{equation}
where  $$F(u)=\left\{
\begin{array}{l}
f(u), \quad x\in (\alpha,\alpha+l), \\
g(u),  \quad  x\in (0,\alpha)\cup (\alpha+l,L).
\end{array}
\right.$$
Here, $(\alpha,\alpha+l)$ represents a protection zone with starting point $\alpha$ and length $l>0$ satisfying $0\leq\alpha<\alpha+l\leq L$.
Note that the species follows the strong Allee effect growth $g(u)$ on $(0,\alpha)\cup (\alpha+l,L)$, while it obeys the logistic growth $f(u)$ in the protection zone $(\alpha,\alpha+l)$, and that $g $ and $f$ satisfy the conditions in {\bf (A1)} and {\bf (A2)}, respectively. We will use general functions $g$ and $ f$ to derive our theories, but use specific examples
of $g$ and $f$ in (\ref{growthfunex}) in numerical simulations.

Boundary conditions  at $x=0$ and $x=L$ can be assumed to be a general Robin type:
\begin{subequations}\label{udbcs}
\begin{align}
  \alpha_1 u_x(t,0)-\alpha_2u(t,0)=0, \quad\alpha_1\geq 0, \quad \alpha_2\geq 0, \quad\alpha_1^2+\alpha_2^2\neq 0, \quad t>0, \label{udbcsup}\\
    \beta_1 u_x(t,L)+\beta_2u(t,L)=0, \quad
 \beta_1\geq 0, \quad \beta_2\geq 0, \quad \beta_1^2+\beta_2^2\neq 0, \quad t>0,\label{udbcsdown}
\end{align}
\end{subequations}
which typically includes special boundary conditions such as
\begin{equation}\label{udbcsZFDD}
\begin{array}{lllll}
\text{Neumann condition: }  &u_x(t,0)=0 &\text{ or } &u_x(t,L)=0,& t>0,\\
\text{Dirichlet condition: } & u(t,0)=0 &\text{ or } & u(t,L)=0, & t>0.
\end{array}
\end{equation}

For any given $u_0\in C([0,L])$, it is easy to check that
equation \eqref{model1dge}-\eqref{udbcs} admits a unique
solution $u(\cdot,\cdot)\in
C^{1,2}((0,\infty)\times([0,L]\backslash\{\alpha,\alpha+l\}))\cap
C^{\frac{\gamma}{2},1+\gamma}((0,\infty)\times[0,L])
\cap C([0,\infty)\times [0,L])$ for any $\gamma\in(0,1)$
and $u(t,\cdot)$ exists for all $t>0$, and that if $u_0\geq 0,\not\equiv 0$, then $ u(t,x)>0$ for all $(t,x)\in (0,\infty)\times [0,L]$ except at the Dirichlet boundary ends; see e.g.,
\cite{JinPengShi,von1988}. Moreover, a simple comparison analysis gives $\limsup\limits_{t\rightarrow \infty} u(t,x)\leq 1$ for any nonnegative solution.

Given $t>0$, since any nonnegative solution $u(t,x)$ of \eqref{model1dge}-\eqref{udbcs} is continuously differentiable at the interface points $\alpha$ and $\alpha+l$, this means biologically that
the population density is continuous and the population flux is conserved at these points, where the population growth conditions change.

The rest of the  paper is organized as follows. In section 2, we will present general
theories for population persistence and extinction by virtue of the
principal eigenvalue of the associated eigenvalue problem for the linearized equation at the trivial solution. In
section 3, we will study the dependence of the principal eigenvalue
on the starting point and the length of the protection zone under different sets of boundary
conditions and then provide management plans for setting up the protection zone. Section 4 ends the paper with a brief discussion of the biological implications of the obtained results and some perspective of future work. In the appendix, we present the detailed calculation of Lemmas \ref{lambdaalphataneqnsgeq0} and \ref{lambdaalphataneqnsgg0}.

\section{The theories for population persistence and extinction}

In this section, we will establish theories for population persistence and extinction for (\ref{model1dge})-(\ref{udbcs}).
We first introduce some function spaces. In the case of
Neumann or Robin boundary
conditions,  let $\mathbb{X} = C^1([0,L], \mathbb{R})$  denote the Banach space of continuously differentiable functions on
 $[0,L]$ with the norm
$\textstyle\|u\|=\max\limits_{x\in[0,L]} |u(x) |+\max\limits_{x\in[0,L]} |u'(x) |$ for  $u\in \mathbb{X}$. The
set of nonnegative functions forms a solid cone $\mathbb{X}_+$ in  $\mathbb{X}$ with
interior $Int(\mathbb{X}_+) =\{ u\in \mathbb{X} : u(x) > 0, \forall x\in [0,L]\}$.
In the case of
  Dirichlet boundary conditions, let $\mathbb{X}=C_0([0,L], \mathbb{R}) \cap C^1([0,L],\mathbb{R})$ be the set of continuously differentiable functions on $[0,L]$ vanishing on
the boundary with
 the norm $\textstyle||u||=\max\limits_{x\in[0,L]} |u(x) |+\max\limits_{x\in[0,L]} |u^\prime(x) |$. The set of nonnegative functions $\mathbb{X}_+$ forms a solid cone in $X$ with nonempty interior in $X$ given by $Int(\mathbb{X}_+)=\{u\in \mathbb{X}_+: u(x)>0 \mbox{ for all } x\in(0,L), u_x(0)>0, u_x(L)<0\}$. If one of the boundary conditions is Neumann or Robin and the other one is Dirichlet, the function space $\mathbb{X}$ can be similarly defined.

 The linearized problem of (\ref{model1dge})-(\ref{udbcs}) at $u=0$ satisfies
\begin{equation}\label{model1dgelin}
\left\{
\begin{array}{l}
\D u_t=u_{xx}-H(x)u, \quad x\in (0,L),\quad t>0,\\
\text{ with boundary conditions as in (\ref{udbcs})},
\end{array}
\right.
\end{equation}
where
\begin{equation}\label{lineargrowthfun}
H(x)=\begin{cases} -f'(0),& x\in(\alpha,\alpha+l),\\
-g'(0),& x\in(0,\alpha)\cup(\alpha+l,L).
\end{cases}
\end{equation}
Letting $u(t,x)=e^{-\lambda t}\varphi(x)$, we obtain the eigenvalue problem associated with (\ref{model1dgelin}):
\begin{equation}\label{model1dgelineig0}
\left\{
\begin{array}{ll}
-\varphi_{xx}+H(x)\varphi=\lambda \varphi, &\quad x\in (0,L),\\
\alpha_1\varphi'(0)-\alpha_2 \varphi(0)=0,
&\quad
\beta_1\varphi'(L)+\beta_2 \varphi(L)=0.
\end{array}
\right.
\end{equation}
It is well-known that \eqref{model1dgelineig0} admits a unique principal eigenvalue,
denoted by $\lambda_1(\alpha,l)$, associated with a positive
eigenfunction in $Int (X_+)$, denoted by $\varphi_1$. Indeed, it follows from standard elliptic regularity theory that $\varphi_1\in C^2([0,L]\backslash\{\alpha,\,\alpha+l\})\cap C^1([0,L])$.

We now use the principal eigenvalue $\lambda_1(\alpha,l)$ of (\ref{model1dgelineig0}) to establish population persistence or extinction for \eqref{model1dge}-\eqref{udbcs}.
The main result in this section is given as below.
\begin{Theorem}\label{persistenceupt}
{\rm
The following statements hold.
\begin{enumerate}
  \item[(i)] If $\lambda_1(\alpha,l)<0$, then the population will be uniformly persistent in the sense that
   there
exists  $\xi_0>0$ such that for any $\phi \in \mathbb{X}_+\setminus\{\mathbf{0}\}$, the solution $u(t,x)$ of \eqref{model1dge}-\eqref{udbcs} with the initial data $\phi$ satisfies \begin{equation}\label{uninequality1}
\liminf_{t\rightarrow \infty}u(t,x)\geq
\xi_0,\ \mbox{ uniformly for } x\in[0,L],
\end{equation}
in the case of Robin boundary conditions or
\begin{equation}\label{uninequality2}
\liminf_{t\rightarrow \infty}u(t,x)
\geq\xi_0
 \tilde{e}(x),\  \mbox{ uniformly for } x\in[0,L],
\end{equation}
 in the case of Dirichlet boundary
conditions, where $\tilde{e}(\cdot)$ is a given positive element in
$Int(\mathbb{X}_+)$.
  \item[(ii)] If $\lambda_1(\alpha,l)>0$, then
  there exist small initial data in $\mathbb{X}_+$ such that
   $\limsup\limits_{t\rightarrow \infty} u(t,x)=0$ uniformly for $x\in [0,L]$.
  \end{enumerate}
}
\end{Theorem}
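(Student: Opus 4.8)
The plan is to realize \eqref{model1dge}-\eqref{udbcs} as a semiflow $\Phi_t$ on the cone $\mathbb{X}_+$ and to combine two elementary sub-/supersolution comparisons — both built from the principal eigenfunction $\varphi_1$ of \eqref{model1dgelineig0} — with the abstract theory of uniform persistence for semiflows. I would open with four standing observations: (a) by the well-posedness and regularity recalled above and the bound $\limsup_{t\to\infty}u(t,x)\le1$, the semiflow is point dissipative, and since each $\Phi_t$ ($t>0$) is compact by parabolic regularization, it admits a connected compact global attractor; (b) by the strong maximum principle and the Hopf boundary lemma (applied on $[0,L]\setminus\{\alpha,\alpha+l\}$), any $\phi\in\mathbb{X}_+\setminus\{\mathbf 0\}$ yields $u(t,\cdot)\in Int(\mathbb{X}_+)$ for every $t>0$; (c) since $f,g\in C^1$ one has $F(u)=-H(x)u+o(u)$ as $u\to0^+$, uniformly in $x$, so for each $\epsilon>0$ there is $\delta=\delta(\epsilon)>0$ with $-(H(x)+\epsilon)u\le F(u)\le-(H(x)-\epsilon)u$ whenever $0\le u\le\delta$; and (d) replacing $H$ by $H\pm\epsilon$ in \eqref{model1dgelineig0} shifts the principal eigenvalue to $\lambda_1(\alpha,l)\pm\epsilon$ but leaves the principal eigenfunction $\varphi_1$ unchanged. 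I note in passing that every comparison used below is legitimate despite the discontinuity of $F$ at $\alpha,\alpha+l$, because the competing functions (solutions and the multiples of $\varphi_1$) are globally $C^1$, so the parabolic maximum principle applies across the interfaces.

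\emph{Part (ii).} Assuming $\lambda_1(\alpha,l)>0$, I would fix $\epsilon\in(0,\lambda_1(\alpha,l))$ with the associated $\delta=\delta(\epsilon)$ and use the supersolution $\bar u(t,x)=\eta e^{-(\lambda_1(\alpha,l)-\epsilon)t}\varphi_1(x)$, with $\eta>0$ so small that $\eta\max_{[0,L]}\varphi_1\le\delta$. Using $-\varphi_1''+H\varphi_1=\lambda_1(\alpha,l)\varphi_1$ one computes $\bar u_t-\bar u_{xx}-F(\bar u)=-(H(x)-\epsilon)\bar u-F(\bar u)\ge0$, the last inequality by observation (c) since $0<\bar u\le\delta$; moreover $\bar u$ satisfies \eqref{udbcs} and is $C^1$ at the interfaces. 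The comparison principle then gives $0\le u(t,x)\le\bar u(t,x)\to0$ uniformly in $x$ for every solution with $\mathbf 0\le\phi\le\eta\varphi_1$, which is the required family of small initial data.

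\emph{Part (i).} Assuming $\lambda_1(\alpha,l)<0$, I would first show $\{\mathbf 0\}$ is a uniform weak repeller: choosing $\epsilon>0$ with $\lambda_1(\alpha,l)+\epsilon<0$ and the associated $\delta$, I claim $\limsup_{t\to\infty}\|u(t,\cdot)\|_{C([0,L])}\ge\delta$ for all $\phi\in\mathbb{X}_+\setminus\{\mathbf 0\}$. If this failed, there would be $t_0>0$ with $0<u(t,\cdot)\le\delta$ on $[0,L]$ for all $t\ge t_0$, whence $F(u)\ge-(H(x)+\epsilon)u$, so $u$ is a supersolution of the linear problem $v_t=v_{xx}-(H(x)+\epsilon)v$ with the boundary conditions \eqref{udbcs}; since $u(t_0,\cdot)\in Int(\mathbb{X}_+)$ there is $c>0$ with $c\varphi_1\le u(t_0,\cdot)$, so the exact solution $\underline v(t,x)=c e^{-(\lambda_1(\alpha,l)+\epsilon)(t-t_0)}\varphi_1(x)$ of that linear problem lies below $u$ for all $t\ge t_0$ by comparison, and since $\lambda_1(\alpha,l)+\epsilon<0$ it grows unboundedly, contradicting $u\le\delta$. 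Next I would upgrade this to uniform persistence: the maximal forward-invariant set of $\Phi_t$ in $\mathbb{X}_+\setminus Int(\mathbb{X}_+)$ is $\{\mathbf 0\}$ by (b), which is compact, isolated and acyclic, so the standard uniform-persistence theory for semiflows — together with the global attractor from (a) — gives $\xi_0>0$ and a compact global attractor $\mathcal A_0\subset Int(\mathbb{X}_+)$ for the restriction of $\Phi_t$ to $Int(\mathbb{X}_+)$ attracting every $\phi\in\mathbb{X}_+\setminus\{\mathbf 0\}$. Finally, since $\mathcal A_0$ is compact in $\mathbb{X}=C^1([0,L])$ and sits in $Int(\mathbb{X}_+)$, in the Robin case $\min_{w\in\mathcal A_0}\min_{[0,L]}w>0$, while in the Dirichlet case the ratio $w/\tilde e$ extends continuously and positively to $[0,L]$ for each $w\in\mathcal A_0$ and its minimum is bounded below uniformly on the compact set $\mathcal A_0$; since the attraction takes place in the $C^1$-topology, shrinking $\xi_0$ yields \eqref{uninequality1} and \eqref{uninequality2} respectively.

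\emph{Main obstacle.} The two genuinely delicate points, in my view, are (1) justifying the maximum/comparison principles for the piecewise-defined nonlinearity $F$ — resolved by the $C^1$-matching of the solutions and of the comparison functions at $\alpha$ and $\alpha+l$ — and (2) converting the abstract norm-level uniform persistence into the explicit pointwise bounds \eqref{uninequality1}-\eqref{uninequality2}; the latter is most delicate under Dirichlet boundary conditions, where one must control the attractor against a fixed positive profile $\tilde e$ that vanishes on the boundary.
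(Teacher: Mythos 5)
Your proposal is correct, and its core coincides with the paper's: the key step in part (i) is the same weak-repeller argument by contradiction, comparing a supposedly small solution from below with an exponentially growing multiple of a principal eigenfunction of a constant perturbation of \eqref{model1dgelineig0}. (Your observation that shifting $H$ by the constant $\epsilon$ leaves $\varphi_1$ unchanged and merely shifts the eigenvalue is a small but genuine simplification; the paper introduces a separate eigenfunction $\varphi_1^{\epsilon}$ for the perturbed problem, which is in fact the same function.) Where the two arguments part ways is in the passage from the weak repeller to the quantitative bounds \eqref{uninequality1}--\eqref{uninequality2}. The paper invokes the generalized distance function of Smith--Zhao, taking $p(\phi)=\min_{[0,L]}\phi$ in the Robin case and $p(\phi)=\sup\{\beta:\phi\ge\beta\tilde e\}$ in the Dirichlet case, so that their abstract persistence theorem delivers $\liminf_{t\to\infty}p(\Phi_t\phi)>\xi_0$, which \emph{is} the desired profile bound with no further work. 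You instead run the classical Hale--Waltman/Magal--Zhao route: uniform persistence relative to $Int(\mathbb{X}_+)$, an interior compact global attractor $\mathcal A_0$, and then a conversion of compactness of $\mathcal A_0$ in $C^1$ into the pointwise bounds; this conversion is the extra step your approach must pay for, and you correctly identify that it is only the $C^1$ (not $C^0$) convergence that controls $w/\tilde e$ near a Dirichlet endpoint. Both routes are standard and sound; the Smith--Zhao formulation is more economical here precisely because the distance function can be chosen to encode the target inequality. In part (ii) you go further than the paper, which simply cites linearized stability of $u=0$, by exhibiting the explicit decaying supersolution $\eta e^{-(\lambda_1-\epsilon)t}\varphi_1$; this is a self-contained and correct substitute.
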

\begin{proof}
(i).
Note that the problem  \eqref{model1dge}-\eqref{udbcs}  has a unique solution in  $\mathbb{X}$ and the solution to
 \eqref{model1dge}-\eqref{udbcs} remains nonnegative on its interval of existence if it is nonnegative initially (see e.g.,
\cite{JinPengShi,von1988}).
Let
$\{\Phi_t\}_{t\geq 0}$ be the semiflow generated by
\eqref{model1dge}-\eqref{udbcs} such that $u(t,x)=\Phi_t(u_0)(x)$, where $u(t,x)$ is the solution of \eqref{model1dge}-\eqref{udbcs} with initial condition $u_0$. Then $\Phi_t:\mathbb{X}_+\rightarrow\mathbb{X}_+$
 is point dissipative since for any
initially nonnegative continuous data $u_0$,
 the solution $u(t,x)$  satisfies $0\leq \limsup\limits_{t\rightarrow \infty}u(t,x)\leq 1$ for all $x\in [0,L]$. Moreover, the maximum principle also implies that for any $v\in \mathbb{X}_+\setminus\{\mathbf{0}\}$, $\Phi_t(v)(x)>0$ for all $t>0$ and $x\in [0,L]$ except at the Dirichlet boundary point(s). Obviously,
$\Phi_t:\mathbb{X}_+\rightarrow\mathbb{X}_+$ is  compact for all $t>0.$
     By \cite[Theorem 3.4.8]{Hale-1988}, it
follows that
$\Phi_t:\mathbb{X}_+\rightarrow\mathbb{X}_+$ ($t>0$), has a global compact attractor.

We now state the following claim: there exists $\delta_0>0$ such that
$$\limsup\limits_{t\rightarrow \infty}||\Phi_t(v)||\geq \delta_0,\ \forall v\in \mathbb{X}_+\setminus\{\mathbf{0}\}.$$

\noindent To this end, let $\varphi_1\in \mathbb{X}_+$
be a positive eigenfunction of the eigenvalue
problem (\ref{model1dgelineig0}) corresponding to the principal
eigenvalue $\lambda_1(\alpha,l)<0$. Then there exists a
sufficiently small $\epsilon_0>0$ such that for any $\epsilon\in (0,\epsilon_0)$, the eigenvalue problem
\begin{equation}\label{model1dgelineig0ep}
\left\{
\begin{array}{ll}
\D -\varphi_{xx}-(f'(0)-\epsilon)\varphi=\lambda\varphi, &\quad  x\in (\alpha,\alpha+l),\\
\D -\varphi_{xx}-(g'(0)-\epsilon)\varphi=\lambda\varphi, & \quad  x\in (0,\alpha)\cup (\alpha+l,L),\\
\alpha_1\varphi'(0)-\alpha_2 \varphi(0)=0,
& \quad
\beta_1\varphi'(L)+\beta_2 \varphi(L)=0
\end{array}
\right.
\end{equation}
admits a principal eigenvalue $\lambda_1^\epsilon<0$ with a
positive eigenfunction $\varphi_1^{\epsilon}\in \mathbb{X}_+$.
Let $\bar{\epsilon}\in (0,\epsilon_0)$.
 By the continuity of $f$ and $g$,
 there exists a $\delta_0>0$ such that $f(u)>(f'(0)-\bar{\epsilon})u$ and $g(u)>(g'(0)-\bar{\epsilon})u$ when
 $0<u<\delta_0$ for all $x\in [0,L]$.
      Assume, for the sake of contradiction, that there exists $v_0\in \mathbb{X}_+\setminus\{\mathbf{0}\}$ such that the solution $u(t,x)=\Phi_t(v_0)(x)$ of \eqref{model1dge}-\eqref{udbcs} with the initial value $v_0$ satisfies
\begin{equation}\label{eqA.100}
\limsup\limits_{t\rightarrow \infty}||\Phi_t(v_0)||<\delta_0.
\end{equation}
Then there
exists a large $t_0>0$, such that $u(t,x)<\delta_0$ for all $x\in[0,L]$ and $t\geq t_0$, and hence,
$f(u)>(f'(0)-\bar{\epsilon})u$ and $g(u)>(g'(0)-\bar{\epsilon})u$ for all $x\in[0,L]$ and $t\geq
t_0.$
Therefore, it holds that
\begin{equation}\label{model1dgeproof}
\left\{
\begin{array}{ll}
\D u_t=u_{xx}+f(u)> u_{xx}+(f'(0)-\bar{\epsilon})u, &\, x\in (\alpha,\alpha+l),\\
\D u_t=u_{xx}+g(u)>u_{xx}+(g'(0)-\bar{\epsilon})u, &\, x\in (0,\alpha)\cup (\alpha+l,L),
\end{array}
\right.
\end{equation}
for all $t\geq t_0$. Since $u(t_0,\cdot)\in \mbox{Int } \mathbb{X}_+$, we can choose a sufficiently small number
$\eta>0$, such that $u(t_0,\cdot)\geq
\eta\varphi_1^{\bar{\epsilon}}(\cdot)$, where
$\varphi_1^{\bar{\epsilon}}$ is the positive eigenfunction of
(\ref{model1dgelineig0ep}) (with $\epsilon=\bar{\epsilon}$) corresponding
to $\lambda_1^{\bar{\epsilon}}$.
 Note that  for
$t\geq t_0$, $\eta
e^{-\lambda_1^{\bar{\epsilon}}(t-t_0)}\varphi_1^{\bar{\epsilon}}(x)$ is the unique
solution of
\begin{equation}\label{model1dgeproof}
\left\{
\begin{array}{l}
\D \bar{u}_t= \bar{u}_{xx}+(f'(0)-\bar{\epsilon})\bar{u}, \quad x\in (\alpha,\alpha+l),\ \ t>t_0,\\
\D \bar{u}_t=\bar{u}_{xx}+(g'(0)-\bar{\epsilon})\bar{u}, \quad x\in (0,\alpha)\cup (\alpha+l,L),\ \ t>t_0,\\
\mbox{ with boundary conditions as in  (\ref{udbcs})},\\
\bar{u}(t_0,x)=
\eta\varphi_1^{\bar{\epsilon}}(x),\ \ \ \ \ \ \,\ \ \ \ x\in(0,L).
\end{array}
\right.
\end{equation}
Then it follows from the comparison principle that
$u(t,x)\geq \eta
e^{-\lambda_1^{\bar{\epsilon}}(t-t_0)}\varphi_1^{\bar{\epsilon}}(x)$
for all $ x\in[0,L], t\geq t_0$,
and hence,
$\textstyle\max_{x\in[0,L]}u(t,x)\rightarrow \infty$ as $t
\rightarrow \infty$, which contradicts (\ref{eqA.100}).
Therefore,
$\limsup\limits_{t\rightarrow \infty}||\Phi_t(v)||\geq \delta_0$ for all $ v\in \mathbb{X}_+\setminus\{\mathbf{0}\}.$
 The claim is proved.

For $\phi\in \mathbb{X}_+$, we define $p(\phi):=\min\limits_{x\in[0,L]}\phi(x)$ in the case of Robin boundary conditions for
\eqref{model1dge}-\eqref{udbcs} or $p(\phi):=\sup\{\beta\in \mathbb{R}_+:
\phi(x)\geq \beta \tilde{e}(x), \forall x\in[0,L]\}$ in the case of Dirichlet boundary
conditions for \eqref{model1dge}-\eqref{udbcs}, where $\tilde{e}$ is a given element in
$Int(C_0^1([0,L],\mathbb{R}_+))$.
Clearly, $p^{-1}(0,\infty)\subseteq \mathbb{X}_+\setminus\{\mathbf{0}\}$. Furthermore, one can show that $p$ is a generalized distance function for the semiflow $\Phi_t$ in the sense that $p$ has the property that if $p(\phi)>0$ or
$\phi\in \mathbb{X}_+\setminus\{\mathbf{0}\}$ with $p(\phi)=0$, then $p(\Phi_t(\phi))>0$ for all $\ t>0$ (see, e.g., \cite{Smithzhao2001}). By the above claim, it follows that  $W^{s}(\{\mathbf{0}\})\cap
(\mathbb{X}_+\setminus\{\mathbf{0}\})=\emptyset$, where $W^{s}(\{\mathbf{0}\})$
is the stable set of $\{\mathbf{0}\}$ (see \cite{Smithzhao2001}).
It then follows from Theorem 3
in \cite{Smithzhao2001} that there exists $\xi_0>0$ such that
$\liminf\limits_{t\rightarrow \infty}p(\Phi_t(\phi))>\xi_0$ for any
$\phi\in \mathbb{X}_+\setminus\{\mathbf{0}\}$. That is, $[\Phi_t(\phi)](x)>\xi_0$
in the case of Robin boundary conditions or $[\Phi_t(\phi)](x)>
\xi_0\tilde{e}(x)$  in the case of Dirichlet boundary
conditions, for any $\phi\in \mathbb{X}_+\setminus\{\mathbf{0}\}$ and sufficiently large $ t$. Hence, (\ref{uninequality1}) and (\ref{uninequality2}) are proved.

(ii).
Note that $u=0$ is locally asymptotically stable for \eqref{model1dge}-\eqref{udbcs} if $\lambda_1(\alpha,l)>0$. In this case, for any sufficiently small initial value $v\in \mathbb{X}_+\setminus\{\mathbf{0}\}$ (i.e., when $||v||$ is sufficiently small), the solution $u(t,x)$ of \eqref{model1dge}-\eqref{udbcs} through $v $ satisfies $\limsup\limits_{t\rightarrow \infty}u(t,x)=0$ and hence $\lim\limits_{t\rightarrow \infty}u(t,x)=0$ uniformly for all $x\in [0,L]$. Therefore, the population dies out in the long term for such initial data.
\end{proof}

In the following,
we give sufficient conditions to guarantee persistence and extinction of the species respectively. Firstly,
 we provide an explicit condition to ensure
 population persistence in terms of model parameters of \eqref{model1dge}-\eqref{udbcs}.
Consider the auxiliary scalar
equation with  Dirichlet boundary conditions on
$\bar{\Omega}_0=[\alpha,\alpha+l]$:
\begin{equation}\label{aux}
\begin{cases}
u_{xx}+f(u)=0,\quad &x\in (\alpha,\alpha+l),\\
u=0,\quad &x=\alpha, \alpha+l.
\end{cases}
\end{equation}
 It is well-known (see
e.g., Theorem 2.5 in \cite{OSS}) that  \eqref{aux} has a unique
positive solution satisfying $0<\theta_{f}(x)<1$ for $x\in(\alpha,\alpha+l)$
when $f'(0)>\lambda_0(\Omega_0)$, where $\lambda_0(\Omega_0)=\pi^2/l^2$ is the unique principal eigenvalue of the eigenvalue problem
\begin{equation}\label{auxeigen}
\begin{cases}
\phi_{xx}=-\lambda \phi,\quad &x\in (\alpha,\alpha+l),\\
\phi=0,\quad &x=\alpha, \alpha+l.
\end{cases}
\end{equation}
Then we can prove the following
result for population persistence in a habitat with a  large protection
zone.

\begin{Proposition}\label{persistence}
{\rm
Suppose that $f'(0)>\pi^2/l^2$, i.e., $l$ is properly large but satisfies $0<l\leq \alpha+l\leq L$.
Then any nonnegative solution $u(t,x)$ ($u\not\equiv 0$) of \eqref{model1dge}-\eqref{udbcs}
satisfies $$\liminf\limits_{t\rightarrow
\infty}u(t,x)\geq \underline{u}(x)$$ for all  $x\in[0,L]$, where
\begin{equation}\label{lower}
\underline{u}(x)=
\begin{cases}
\theta_{f}(x),\quad &x\in (\alpha,\alpha+l),\\
0,\quad &x\in(0,L)\backslash (\alpha,\alpha+l).
\end{cases}
\end{equation}
}
\end{Proposition}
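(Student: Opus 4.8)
The plan is a fairly standard sub-solution/comparison argument localized to the protection zone $(\alpha,\alpha+l)$. Outside of it there is nothing to prove: $\underline u(x)=0$ there and any solution of \eqref{model1dge}-\eqref{udbcs} is nonnegative, so $\liminf_{t\to\infty}u(t,x)\geq 0=\underline u(x)$ automatically. Hence the work is confined to $[\alpha,\alpha+l]$, on which $F=f$ and $u$ solves $u_t=u_{xx}+f(u)$ in the classical sense on the open set $(\alpha,\alpha+l)\times(0,\infty)$; note that the interface points $\alpha,\alpha+l$ lie on the lateral boundary of this strip, so the lack of $C^2$-regularity of $u$ there will not interfere with the comparison argument below.

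Fix any $t_1>0$. Since $u_0\geq 0$, $u_0\not\equiv 0$, the strong maximum principle gives $u(t_1,x)>0$ for $x\in(\alpha,\alpha+l)$, with $u(t_1,\cdot)\in\mathrm{Int}(\mathbb{X}_+)$. Let $\phi_0>0$ on $(\alpha,\alpha+l)$ be a principal eigenfunction of \eqref{auxeigen}, so $(\phi_0)_{xx}=-(\pi^2/l^2)\phi_0$ and $\phi_0(\alpha)=\phi_0(\alpha+l)=0$. Because $f\in C^1$ and $f'(0)>\pi^2/l^2$, we have $f(s)>(\pi^2/l^2)s$ for all sufficiently small $s>0$; therefore, for $\epsilon>0$ small, $(\epsilon\phi_0)_{xx}+f(\epsilon\phi_0)=-(\pi^2/l^2)\epsilon\phi_0+f(\epsilon\phi_0)>0$ on $(\alpha,\alpha+l)$, i.e. $\epsilon\phi_0$ is a strict stationary subsolution of \eqref{aux}. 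Shrinking $\epsilon$ if necessary, I would also arrange $\epsilon\phi_0\leq u(t_1,\cdot)$ on $[\alpha,\alpha+l]$; this is clear in the interior and at any endpoint where $u(t_1,\cdot)>0$, while at an endpoint that coincides with a Dirichlet boundary point of $(0,L)$ it follows from the Hopf lemma (which gives $u_x(t_1,\alpha)>0$, resp. $u_x(t_1,\alpha+l)<0$) together with the linear vanishing of $\phi_0$ at that endpoint.

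Let $\underline v(t,x)$ be the solution on $(\alpha,\alpha+l)$ of $v_t=v_{xx}+f(v)$ with homogeneous Dirichlet conditions at $x=\alpha,\alpha+l$ and $\underline v(0,\cdot)=\epsilon\phi_0$. Since $\epsilon\phi_0$ is a subsolution and the constant $1$ is a supersolution of \eqref{aux} (as $f(1)=0$) with $\epsilon\phi_0\leq 1$, the solution $\underline v(t,\cdot)$ is nondecreasing in $t$ and bounded above by $1$; hence it converges monotonically, and by Dini's theorem uniformly, as $t\to\infty$ to a stationary solution of \eqref{aux} lying between $\epsilon\phi_0$ and $1$, which by the uniqueness statement in Theorem 2.5 of \cite{OSS} must be $\theta_f$. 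On the strip $[\alpha,\alpha+l]\times[t_1,\infty)$ the function $u$ satisfies the same equation, has lateral boundary values $u(t,\alpha),u(t,\alpha+l)\geq 0$, and satisfies $u(t_1,\cdot)\geq\epsilon\phi_0$; thus $u$ is a supersolution of the problem defining $\underline v(\cdot-t_1,\cdot)$, and the parabolic comparison principle yields $u(t,x)\geq\underline v(t-t_1,x)$ for all $x\in[\alpha,\alpha+l]$ and $t\geq t_1$. Letting $t\to\infty$ gives $\liminf_{t\to\infty}u(t,x)\geq\theta_f(x)$ on $[\alpha,\alpha+l]$, which together with the trivial bound outside the protection zone is precisely $\liminf_{t\to\infty}u(t,x)\geq\underline u(x)$ for all $x\in[0,L]$.

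The argument is largely routine, and the one step deserving a little care is the domination $\epsilon\phi_0\leq u(t_1,\cdot)$ all the way up to the endpoints $\alpha$ and $\alpha+l$ in the case where one of them is a Dirichlet boundary point of $(0,L)$; this is exactly where the Hopf boundary-point lemma is used. Everything else is the monotone sub-/supersolution method for the logistic Dirichlet problem on the subinterval $[\alpha,\alpha+l]$, combined with a single application of the parabolic comparison principle.
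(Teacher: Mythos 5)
Your proof is correct, but it follows a genuinely different route from the paper's. The paper forms the glued function $\underline{u}$ in \eqref{lower} directly on all of $[0,L]$, checks (via the corner condition at the interface points, where $\theta_f$ has strictly positive, resp. negative, one-sided derivatives) that it is a weak lower solution of \eqref{model1dge}, pairs it with the constant upper solution $\bar u=\max\{1,\max_{x}u_0(x)\}$, and invokes the glued sub-/supersolution theory of Theorem 1.25 in \cite{CosnerContrall} in a single stroke. You instead never leave the protection zone: you wait until time $t_1$ so that $u(t_1,\cdot)$ is positive in the interior, slip the small strict subsolution $\epsilon\phi_0$ (built from the principal Dirichlet eigenfunction of \eqref{auxeigen}, using $f'(0)>\pi^2/l^2$) underneath $u(t_1,\cdot)$, run the monotone iteration up to $\theta_f$ via the uniqueness statement of \cite{OSS}, and finish with one parabolic comparison on the strip $[\alpha,\alpha+l]\times[t_1,\infty)$. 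Your version is longer but more self-contained, and it makes explicit a point the paper's one-line appeal to upper-lower solution theory leaves implicit: an arbitrary nontrivial nonnegative $u_0$ need not dominate $\underline{u}$ itself, so one must first manufacture a subsolution small enough to sit below the solution at some positive time while still attracting up to $\theta_f$ --- which is exactly what your $\epsilon\phi_0$ and the Hopf-lemma endpoint estimate accomplish. The paper's approach, in exchange, avoids the dynamic monotone-convergence step entirely by delegating it to the cited theorem.
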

\begin{proof}
 Let $\bar{u}=\max\{1,\max\limits_{x\in[0,L]}u_0(x)\}$. Then $\underline{u}(x)<\bar{u}$ on $[0,L]$.
 Clearly,
$\bar{u}$ is an upper solution  of \eqref{model1dge}. By using the glued local lower solution in $[0,L]$ (see Theorem 1.25 in \cite{CosnerContrall}), we know that $\underline{u}(x)$ defined in \eqref{lower} is a lower solution of \eqref{model1dge}. Then by the upper-lower solution theory, the nonnegative solution $u(t,x)$ of \eqref{model1dge} satisfies
 \begin{equation*}
 \underline{u}(x)\leq \liminf _{t\rightarrow \infty}u(t,x),
  \;\; x\in[0,L]
 \end{equation*}
for any initial value $u_0$ satisfying $u_0\in \mathbb{X}$, $ u_0\geq,\not\equiv0$.
\end{proof}

Next, we  give a sufficient condition for population extinction when one of the boundary conditions is not Neumann.
Let $\lambda_0(L)$ be the principal eigenvalue of the
eigenvalue problem:
\begin{equation}\label{lambda0LBCs}
\begin{cases}
-\varphi''=\lambda\varphi, &\quad x\in(0,L),\\
\alpha_1\varphi'(0)-\alpha_2\varphi(0)=0,
& \quad \beta_1\varphi'(L)+\beta_2\varphi(L)=0.
\end{cases}
\end{equation}
The value of $\lambda_0(L)$ or the conditions that $\lambda_0(L)$ should satisfy
are given in Table \ref{lambda0Ltable}.
\begin{table}[htp]\centering \label{t1}
\begin{tabular}{|c|c|c|c|}
\hline
\backslashbox{$x=0$}{$x=L$} & $\begin{array}{l}
\mbox{ Dirchlet }\\
\varphi(L)=0
\end{array}$ & $\begin{array}{l}
\mbox{  Neumann }\\
\varphi'(L)=0
\end{array}$ & $\begin{array}{l}
\mbox{ Robin  } (\beta_i>0)\\
\beta_1\varphi'(L)+\beta_2\varphi(L)=0
\end{array}$  \\
\hline
$\begin{array}{l}
\mbox{ Dirchlet }\\
\varphi(0)=0
\end{array}$ & $\dfrac{\pi^2}{L^2}$ & $\dfrac{\pi^2}{4L^2}$ & $\tan{\sqrt{\lambda_0}L}=-\dfrac{\beta_1\sqrt{\lambda_0}}{\beta_2}$  \\
\hline
$\begin{array}{l}
\mbox{  Neumann }\\
\varphi'(0)=0
\end{array}$& $\dfrac{\pi^2}{4L^2}$ & $0$ & $\tan{\sqrt{\lambda_0}L}=\dfrac{\beta_2}{\beta_1\sqrt{\lambda_0}}$ \\
\hline
$\begin{array}{l}
\mbox{ Robin  }(\alpha_i>0)\\
\alpha_1\varphi'(0)-\alpha_2\varphi(0)=0
\end{array}$  & $\tan{\sqrt{\lambda_0}L}=-\dfrac{\alpha_1\sqrt{\lambda_0}}{\alpha_2}$ & $\tan{\sqrt{\lambda_0}L}=\dfrac{\alpha_2}{\alpha_1\sqrt{\lambda_0}}$ & $\cot{\sqrt{\lambda_0}L}=\dfrac{\alpha_1\beta_1 \lambda_0-\alpha_2\beta_2}{\sqrt{\lambda_0}(\alpha_2\beta_1+\alpha_1\beta_2)}$ \\
\hline
\end{tabular}
\caption{The values or conditions for the principle eigenvalue $\lambda_0(L)$ of (\ref{lambda0LBCs}) under different boundary conditions. For simplicity, we denote $\lambda_0(L)$ by $\lambda_0$ in the table.}\label{lambda0Ltable}
\end{table}
Let $\lambda_1^\ast:=\lambda_0(L)-f'(0)$ be the principal eigenvalue of
\begin{equation}\label{newproeq1}
\begin{cases}
-\varphi_{xx}-f'(0)\varphi=\lambda \varphi, \quad x\in(0,L),\\
\mbox{the boundary condition at }x=0 \mbox{ or } x=L  \mbox{ is not Neumann.}
\end{cases}
\end{equation}

Before stating our result, we need to introduce a so-called subhomogeneous condition for $f$ and $g$ based on {\bf (A1)} and {\bf (A2)}. They are typically satisfied by the examples in  (\ref{growthfunex}).

\begin{enumerate}
\item[{\bf (A3)}] $f(u)/u\leq f'(0)$ and $g(u)/u\leq -g'(0)$ for any $u>0$, and $f'(0)\geq -g'(0)$.
\end{enumerate}

\begin{Proposition}\label{sufficonex}
{\rm
Assume that {\bf (A3)} holds.
Let $u(t,x)$ be the solution of \eqref{model1dge}-\eqref{udbcs} with the same boundary conditions as in (\ref{newproeq1}) for $t>0$.
 If $\lambda_1^\ast>0$, then $\lim\limits_{t\rightarrow \infty}u(t,x)=0$ uniformly for all $x\in [0,L]$.
}
\end{Proposition}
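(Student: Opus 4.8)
The plan is to dominate the reaction term $F$ by its linearization at $u=0$ using \textbf{(A3)}, thereby turning $u$ into a subsolution of the spatially homogeneous linear equation $w_t=w_{xx}+f'(0)w$ under the boundary conditions of \eqref{newproeq1}, and then to invoke the global exponential stability of the zero state for that linear problem, which holds precisely because its principal eigenvalue $\lambda_1^\ast=\lambda_0(L)-f'(0)$ is positive by hypothesis. In particular, no smallness of $u_0$ will be needed: the whole cone $\mathbb{X}_+$ is driven to $0$.

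First I would establish the uniform bound $F(u)\le f'(0)\,u$ for every $u\ge 0$ and every $x\in(0,L)$. On the protection zone $(\alpha,\alpha+l)$, the first inequality in \textbf{(A3)} gives $F(u)=f(u)\le f'(0)u$; on its complement, the second inequality gives $F(u)=g(u)\le -g'(0)u$, and the third part of \textbf{(A3)}, $-g'(0)\le f'(0)$, upgrades this to $F(u)\le f'(0)u$ there as well; the bound holds trivially at $u=0$, and the two interface points are a null set. Since the solution $u$ of \eqref{model1dge}-\eqref{udbcs} is $C^{1}$ across $\alpha,\alpha+l$ and $C^{2}$ in $x$ on each subinterval, the differential inequality $u_t-u_{xx}=F(u)\le f'(0)u$, valid on $(0,L)\setminus\{\alpha,\alpha+l\}$ together with the boundary conditions of \eqref{newproeq1}, makes $u$ a glued subsolution of $w_t=w_{xx}+f'(0)w$ in the sense of Theorem 1.25 in \cite{CosnerContrall}.

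Next I would introduce the comparison function. Let $w(t,x)$ solve $w_t=w_{xx}+f'(0)w$ with the boundary conditions of \eqref{newproeq1} and $w(0,\cdot)=u_0$; it exists for all $t>0$ by standard linear parabolic theory and, by the maximum principle, $w\ge 0$. Since $u(0,\cdot)=w(0,\cdot)$, $u$ is a (glued) subsolution, and $w$ is a solution, the parabolic comparison principle (valid for these Robin/Dirichlet conditions) yields $0\le u(t,x)\le w(t,x)$ for all $t>0$ and $x\in[0,L]$, so it suffices to show $w(t,\cdot)\to 0$ uniformly. Fixing any $t_1>0$, one has $w(t_1,\cdot)\in\mathrm{Int}(\mathbb{X}_+)$ (using the Hopf lemma at a Dirichlet endpoint, if present), hence $w(t_1,\cdot)\le M\varphi_1^\ast$ for some $M>0$, where $\varphi_1^\ast\in\mathrm{Int}(\mathbb{X}_+)$ is a positive principal eigenfunction of \eqref{newproeq1} associated with $\lambda_1^\ast=\lambda_0(L)-f'(0)$. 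Because $M e^{-\lambda_1^\ast(t-t_1)}\varphi_1^\ast(x)$ is itself a solution of $v_t=v_{xx}+f'(0)v$ under those boundary conditions, a further comparison gives $w(t,x)\le M e^{-\lambda_1^\ast(t-t_1)}\varphi_1^\ast(x)$ for $t\ge t_1$; since $\lambda_1^\ast>0$ the right side tends to $0$ uniformly, so $u(t,x)\to 0$ uniformly on $[0,L]$. (Equivalently, self-adjointness of $-\partial_{xx}-f'(0)$ with these boundary conditions gives $\|w(t,\cdot)\|_{L^2}\le e^{-\lambda_1^\ast t}\|u_0\|_{L^2}$, and parabolic smoothing over $[t-1,t]$ upgrades this to uniform exponential decay.)

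I do not expect a genuine obstacle: the argument is a soft comparison combined with linear spectral decay. The two points that merit care are, first, that all three parts of \textbf{(A3)} are needed to produce the \emph{single} uniform domination $F(u)\le f'(0)u$ simultaneously on the protection zone, on its complement, and via the growth-rate comparison $f'(0)\ge -g'(0)$; and second, that the correct decay exponent here is $\lambda_1^\ast$ attached to the homogeneous problem \eqref{newproeq1}, not the principal eigenvalue of the weighted problem \eqref{model1dgelineig0} — the reduction to the former being exactly what the subsolution comparison accomplishes. The hypothesis that at least one boundary condition is non-Neumann enters only to guarantee $\lambda_0(L)>0$, so that $\lambda_1^\ast>0$ is possible; it is not otherwise used.
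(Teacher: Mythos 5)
Your proposal is correct and follows essentially the same route as the paper: both use \textbf{(A3)} to dominate $F(u)$ by $f'(0)u$ everywhere (the paper passes through an intermediate piecewise-linear comparison problem before reaching the homogeneous one, but this is cosmetic) and then compare with the solution of $\tilde u_t=\tilde u_{xx}+f'(0)\tilde u$, which decays because $\lambda_1^\ast>0$. Your write-up merely makes explicit the eigenfunction-comparison step showing the linear problem decays uniformly, which the paper asserts without detail.
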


\begin{proof}
Consider
\begin{equation}\label{2-1}
\begin{cases}
\bar{u}_t=\bar{u}_{xx}+f'(0)\bar{u}, \quad x\in (\alpha,\alpha+l),\,t>0,\\
\bar{u}_t=\bar{u}_{xx}-g'(0)\bar{u}, \quad x\in (0,\alpha)\cup (\alpha+l,L),\,t>0,\\
\bar{u}(0,x)=u_0(x)\geq 0, \quad x\in (0,L),\\
\bar{u}(t,0) \mbox{ and } \bar{u}(t,L) \mbox{ satisfy the same boundary conditions as in (\ref{newproeq1}) for } t>0.
\end{cases}
\end{equation}
From (A1)-(A3), $f(\bar{u})\leq f'(0)\bar{u}$ and $g(\bar{u})\leq -g'(0)\bar{u}\leq f'(0)\bar{u}$ for all $\bar{u}\geq 0$.
By the comparison principle (see e.g., \cite[Lemma A.3]{JinPengShi}), all nonnegative solutions of \eqref{model1dge} satisfy $u(t,x)\leq \bar{u}(t,x)\leq\tilde{u}(t,x)$, given the same initial value condition $u_0$, where $\tilde{u}(t,x)$ is the solution of
\begin{equation}\label{2-2}
\begin{cases}
\tilde{u}_t=\tilde{u}_{xx}+f'(0)\tilde{u}, \quad x\in (0,L),\,t>0,\\
\tilde{u}(0,x)=u_0(x)\geq 0, \quad x\in (0,L),\\
\tilde{u}(t,0) \mbox{ and } \tilde{u}(t,L) \mbox{ satisfy the same boundary conditions as in (\ref{newproeq1}) for } t>0.
\end{cases}
\end{equation}
 Thus, $\lim\limits_{t\rightarrow \infty}\tilde{u}(t,x)=0$ uniformly for all $x\in [0,L]$ if $\lambda_1^\ast>0$, which completes the proof.
\end{proof}

\begin{Remark}
{\rm
Proposition \ref{persistence} implies that the population will not be
extinct when the length of the protection zone $l$ is larger than the
threshold $\pi/\sqrt{f'(0)}$. In this case, no matter how small the
initial value is, provided that it is not zero, the population will not be wiped out at least in the protection zone. Note that $\lambda_1(\alpha,l)$ is not used in the proof of Proposition \ref{persistence}.  We will confirm in the next section that $\lambda_1(\alpha,l)<0$ follows from the assumption $f'(0)>\pi^2/l^2$ for \eqref{model1dge}-\eqref{udbcs} with some specific boundary conditions. Since $\lambda_0(L)$ decreases in $L$, we know that $\lambda_0(L)>f'(0)$ for small $L$. Thus, Proposition \ref{sufficonex} indicates that the population will be extinct if the total habitat length $L$ is smaller than some critical value, except in the case where the Neumann boundary condition is applied at $x=0,L$ and the population will always be extinct in this case despite the initial value distribution.
}
\end{Remark}

In the rest of the paper, we say that the population will {\it persist } if $\lambda_1(\alpha,l)<0$ and the population will be {\it extinct } if $\lambda_1(\alpha,l)>0$. In particular, since our purpose is to protect endangered species with a small initial population density, by saying that the population will be extinct, we mean that the population will die out if its initial distribution is sufficiently small.

\section{The effect of the protection zone on population
persistence}

 Proposition \ref{persistence} indicates that a large size of the protection zone guarantees population persistence at least in the protection zone.
In this section, we investigate the dependence of $\lambda_1(\alpha,l)$ on $\alpha$ and $l$ and then provide more details about how the protection zone  $[\alpha,\alpha+l]$ influences population persistence or extinction. In this section, we always assume {\bf (A1)}, {\bf (A2)} and  $f'(0)\geq -g'(0)$ in {\bf (A3)}.

\subsection{The relation between $\lambda_1(\alpha,l)$ and $l$}

Note that $\lambda_1(\alpha,l)$ is a continuous and differentiable function in $\alpha\in [0,L-l]$ or in $l\in [0,L-\alpha]$; see e.g., \cite{Hessbook}. By a simple observation, we have the following result.
\begin{Lemma}\label{lambdadeplth}
{\rm
For any fixed $\alpha\in (0,L)$, $\lambda_1(\alpha,l)$ decreases
with respect to $l\in(0,L-\alpha)$.
}
\end{Lemma}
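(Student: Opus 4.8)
The plan is to read off the monotonicity from the variational characterization of the principal eigenvalue, using the fact that enlarging $l$ makes the potential in \eqref{model1dgelineig0} pointwise smaller. Write $H=H_{\alpha,l}$ to record the dependence on $l$. By \eqref{lineargrowthfun}, $H_{\alpha,l}\equiv -f'(0)$ on $(\alpha,\alpha+l)$ and $H_{\alpha,l}\equiv -g'(0)$ on $(0,\alpha)\cup(\alpha+l,L)$; since $f'(0)>0$ and $g'(0)<0$ by {\bf (A1)}--{\bf (A2)}, we have $-f'(0)<0<-g'(0)$. Consequently, if $0<l_1<l_2<L-\alpha$, then $H_{\alpha,l_2}(x)\le H_{\alpha,l_1}(x)$ for every $x\in(0,L)$, with strict inequality on the slice $(\alpha+l_1,\alpha+l_2)\subset(0,L)$, a set of positive measure.

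First I would invoke the standard min-characterization
\[
\lambda_1(\alpha,l)=\min_{\varphi\neq 0}\ \frac{\int_0^L\big(|\varphi'|^2+H_{\alpha,l}\,\varphi^2\big)\,dx+B(\varphi)}{\int_0^L\varphi^2\,dx},
\]
the minimum being taken over the natural Sobolev space attached to the boundary conditions \eqref{udbcs} ($H_0^1(0,L)$ in the Dirichlet case, $H^1(0,L)$ otherwise), and where $B(\varphi)$ collects the boundary contributions ($B\equiv0$ in the Dirichlet and Neumann cases, a term proportional to $\varphi(0)^2$ and/or $\varphi(L)^2$ in the Robin cases). The decisive point is that $B$ does not depend on $l$. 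Plugging the principal eigenfunction $\varphi_1$ of \eqref{model1dgelineig0} for $l=l_1$ into the quotient for $l=l_2$ and using $H_{\alpha,l_2}\le H_{\alpha,l_1}$ yields $\lambda_1(\alpha,l_2)\le\lambda_1(\alpha,l_1)$; the inequality is strict because $\varphi_1\in Int(\mathbb{X}_+)$ is positive on $(\alpha+l_1,\alpha+l_2)$, so the numerator strictly decreases while the denominator is unchanged. Hence $l\mapsto\lambda_1(\alpha,l)$ is strictly decreasing.

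An equivalent, and even shorter, route, in line with the differentiability of $l\mapsto\lambda_1(\alpha,l)$ recalled just before the statement, is the Hadamard-type formula $\frac{d}{dl}\lambda_1(\alpha,l)=\big(g'(0)-f'(0)\big)\,\varphi_1(\alpha+l)^2<0$ with $\varphi_1$ normalized in $L^2$: differentiate $-\varphi_{1,xx}+H_{\alpha,l}\varphi_1=\lambda_1\varphi_1$ in $l$, pair with $\varphi_1$, integrate by parts (the boundary terms cancel since the conditions in \eqref{udbcs} are $l$-independent), and use $\partial_l H_{\alpha,l}=(g'(0)-f'(0))\,\delta_{\alpha+l}$ distributionally; note that $\alpha+l<L$ is an interior point, so $\varphi_1(\alpha+l)>0$. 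The only points needing care are that the eigenvalue problem and its underlying space change with the chosen boundary conditions — but since $l$ enters only through the bulk potential and never through $B(\varphi)$, the argument is uniform over all cases — and the passage from weak to \emph{strict} monotonicity, which rests on the strict positivity of $\varphi_1$ on the newly added interval; I expect this last point to be the only, and still minor, obstacle.
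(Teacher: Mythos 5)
Your proposal is correct and follows essentially the same route as the paper: both arguments rest on the observation that $-f'(0)<0<-g'(0)$ makes the potential $H$ pointwise nonincreasing in $l$, combined with the monotonicity of the principal eigenvalue in the zeroth-order coefficient. The only difference is that the paper cites this monotonicity (Lemma 15.5 of Hess's book) while you prove it directly from the Rayleigh-quotient characterization, which has the minor added benefit of yielding \emph{strict} monotonicity.
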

\begin{proof}
Given an $\alpha\in (0,L)$, $H$ defined in \eqref{model1dgelineig0} is decreasing
with regard to $l$ since $-f'(0)<0<-g'(0)$. By the monotonicity of
the principal eigenvalue $\lambda_1(\alpha,l)$ of \eqref{model1dgelineig0} in
the reaction function $H$ (see e.g., \cite[Lemma 15.5]{Hessbook}), we know that $\lambda_1(\alpha,l)$ also
decreases in $l$.
\end{proof}

\begin{Remark}
{\rm
Lemma \ref{lambdadeplth} implies that the longer the protection zone is, the easier it is for the population to persist in the whole habitat.
}
\end{Remark}

\subsection{The relation between $\lambda_1(\alpha,l)$ and
$\alpha$}\label{lambdaalphader}

In the following, we investigate the dependence of $\lambda_1(\alpha,l)$ on $\alpha$.
Note that $\lambda_1(\alpha,l)$ and $\varphi_1$ satisfy
\begin{equation}\label{model1dgelineig}
\left\{
\begin{array}{ll}
\D -\varphi_{1,xx}-f'(0)\varphi_1=\lambda_1(\alpha,l)\varphi_1, &\quad  x\in (\alpha,\alpha+l),\\
\D -\varphi_{1,xx}-g'(0)\varphi_1=\lambda_1(\alpha,l)\varphi_1, &\quad  x\in (0,\alpha)\cup (\alpha+l,L),\\
\alpha_1\varphi_1'(0)-\alpha_2 \varphi_1(0)=0,
&\quad
\beta_1\varphi_1'(L)+\beta_2 \varphi_1(L)=0.
\end{array}
\right.
\end{equation}

We can derive some estimates for $\lambda_1(\alpha,l)$ as follows.
 Consider the eigenvalue problems
\begin{equation}\label{feigen}
\begin{cases}
-\varphi_{xx}-f'(0)\varphi=\lambda \varphi, &\quad x\in (0,L),\\
\alpha_1\varphi'(0)-\alpha_2 \varphi(0)=0,
&\quad
\beta_1\varphi'(L)+\beta_2 \varphi(L)=0,
\end{cases}
\end{equation}
and
\begin{equation}\label{geigen}
\begin{cases}
-\varphi_{xx}-g'(0)\varphi=\lambda \varphi,&\quad x\in (0,L),\\
\alpha_1\varphi'(0)-\alpha_2 \varphi(0)=0,
&\quad
\beta_1\varphi'(L)+\beta_2 \varphi(L)=0.
\end{cases}
\end{equation}
Denote by $\lambda_{L,f}$ and $\lambda_{L,g}$  the principle
eigenvalues of (\ref{feigen}) and (\ref{geigen}), respectively. The following result is valid.

\begin{Lemma}\label{lambdafg1}
{\rm
Let $\lambda_1(\alpha,l)$ be the principal eigenvalue of \eqref{model1dgelineig0}. Then
\begin{equation*}
\lambda_{L,f}<\lambda_1(\alpha,l)<\lambda_{L,g}.
\end{equation*}
}
\end{Lemma}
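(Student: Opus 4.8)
The plan is to exploit the monotonicity of the principal eigenvalue in the zeroth-order coefficient of the Sturm–Liouville operator, together with the observation that the potential $H(x)$ in \eqref{model1dgelineig0} lies strictly between the two constant potentials $-f'(0)$ and $-g'(0)$. Concretely, recall from {\bf (A1)}–{\bf (A2)} that $-f'(0)<0<-g'(0)$, so for the protection-zone configuration with $0<l$ we have
\begin{equation*}
-f'(0)\le H(x)\le -g'(0)\quad\text{for all }x\in(0,L),
\end{equation*}
with $H(x)=-f'(0)$ on the nonempty open set $(\alpha,\alpha+l)$ and $H(x)=-g'(0)$ on its complement, which is also nonempty since $0<l$ forces $\alpha>0$ or $\alpha+l<L$ (indeed, for the statement to concern a genuine protection zone inside the habitat one has $0\le\alpha<\alpha+l\le L$ with the complement nonempty; the degenerate case $l=L$ is excluded). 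Thus $H$ is \emph{not identically equal} to either constant.

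First I would invoke the strict monotonicity of the principal eigenvalue with respect to the reaction coefficient — exactly the tool already cited in the proof of Lemma \ref{lambdadeplth}, namely \cite[Lemma 15.5]{Hessbook}: if $H_1(x)\le H_2(x)$ on $(0,L)$ with strict inequality on a set of positive measure, then the principal eigenvalue of $-\varphi''+H_1\varphi=\lambda\varphi$ (under the fixed Robin-type boundary conditions \eqref{udbcs}) is strictly smaller than that of $-\varphi''+H_2\varphi=\lambda\varphi$. Applying this with $H_1\equiv -f'(0)$ and $H_2=H$ gives $\lambda_{L,f}<\lambda_1(\alpha,l)$, and applying it with $H_1=H$ and $H_2\equiv -g'(0)$ gives $\lambda_1(\alpha,l)<\lambda_{L,g}$. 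Chaining these yields the claim.

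Alternatively — and this is the version I would actually write out, since it is self-contained — I would use the Rayleigh quotient characterization
\begin{equation*}
\lambda_1(\alpha,l)=\inf_{\varphi}\frac{\displaystyle\int_0^L \bigl(\varphi'^2+H(x)\varphi^2\bigr)\,dx+\text{(boundary terms)}}{\displaystyle\int_0^L\varphi^2\,dx},
\end{equation*}
the infimum being over the appropriate form domain determined by \eqref{udbcs}. Since $H(x)\le -g'(0)$ pointwise, the numerator for any admissible $\varphi$ is bounded above by the corresponding numerator for problem \eqref{geigen}; testing with the (positive) principal eigenfunction of \eqref{geigen} and using that $H<-g'(0)$ on the positive-measure set $(\alpha,\alpha+l)$ where that eigenfunction is strictly positive shows the inequality is strict, so $\lambda_1(\alpha,l)<\lambda_{L,g}$. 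Symmetrically, $H(x)\ge -f'(0)$ gives $\lambda_1(\alpha,l)\ge\lambda_{L,f}$, and strictness follows because equality in the variational problem would force the principal eigenfunction $\varphi_1$ of \eqref{model1dgelineig0} to be a minimizer for \eqref{feigen} as well, hence to satisfy $-\varphi_1''-f'(0)\varphi_1=\lambda_{L,f}\varphi_1$ on all of $(0,L)$; but on the complement of the protection zone $\varphi_1$ solves $-\varphi_1''-g'(0)\varphi_1=\lambda_1\varphi_1$ with $\varphi_1>0$, which is incompatible since $f'(0)\ne -g'(0)$ in general and, even when $f'(0)=-g'(0)$ is allowed by {\bf (A3)}, the coefficient jump at $\alpha$ is immaterial here — the real point is that a nonzero solution cannot simultaneously satisfy two linear ODEs with different constant potentials on an open subinterval. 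This rigidity step is the only place requiring a little care; everything else is the standard monotonicity of principal eigenvalues. The main (mild) obstacle, then, is making the strictness airtight, for which the cleanest route is simply to quote the strict monotonicity in \cite[Lemma 15.5]{Hessbook} as in Lemma \ref{lambdadeplth}, observing that $H$ differs from each constant on a set of positive measure.
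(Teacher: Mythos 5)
Your first argument—strict monotonicity of the principal eigenvalue in the zeroth-order coefficient via \cite[Lemma 15.5]{Hessbook}, applied to $-f'(0)\le H\le -g'(0)$ with $H$ not identically equal to either constant—is exactly the paper's proof, which consists of precisely this observation and citation. The Rayleigh-quotient version you sketch is just a self-contained unpacking of that same monotonicity fact, so the proposal is correct and takes essentially the same approach.
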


\begin{proof}
Note that $-f'(0)\not\equiv, \leq H\leq, \not\equiv -g'(0)$ on $ [0,L]$.
The result follows from the monotonicity of
the principal eigenvalue $\lambda_1(\alpha,l)$ of \eqref{model1dgelineig0} in
the reaction function $H$ (see e.g., \cite[Lemma 15.5]{Hessbook}).
\end{proof}

We first show that $f'(0)+\lambda_1(\alpha,l)>0$ is always true.
\begin{Lemma}\label{lambda1fp0}
{\rm
$\lambda_1(\alpha,l)+f'(0)>0$ is always true  under the general boundary conditions in \eqref{udbcs}.
}
\end{Lemma}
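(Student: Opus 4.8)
The plan is to show that $\lambda_1(\alpha,l)+f'(0)>0$ by exhibiting a test function on which the associated Rayleigh quotient (or a differential-inequality comparison) forces the principal eigenvalue of the shifted operator $-\varphi_{xx}-f'(0)\varphi$ on $(\alpha,\alpha+l)$ to be positive, and then transferring this to the full problem by domain monotonicity. First I would recall the variational characterization of $\lambda_1(\alpha,l)$: for Robin/Neumann data it is
\begin{equation*}
\lambda_1(\alpha,l)=\inf_{0\neq\varphi\in H^1(0,L)}\frac{\displaystyle\int_0^L \varphi_x^2\,dx+\int_0^L H(x)\varphi^2\,dx+(\text{boundary terms})}{\displaystyle\int_0^L\varphi^2\,dx},
\end{equation*}
so that $\lambda_1(\alpha,l)+f'(0)$ is the principal eigenvalue of the operator $-\partial_{xx}+\big(H(x)+f'(0)\big)$ with the same boundary conditions, where the potential $H(x)+f'(0)$ equals $0$ on the protection zone $(\alpha,\alpha+l)$ and equals $f'(0)-g'(0)>0$ outside it (using $g'(0)<0<f'(0)$). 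Since this potential is nonnegative everywhere, the operator $-\partial_{xx}+\big(H(x)+f'(0)\big)$ is nonnegative on $H^1_0$, hence its principal eigenvalue is $\geq 0$; the task is to upgrade "$\geq 0$" to "$>0$".

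The strict positivity is where the argument must actually use that the boundary conditions are not purely Neumann-with-zero-potential in a degenerate way. The cleanest route: the principal eigenfunction $\varphi_1>0$ would have to be the minimizer, and equality in the Rayleigh quotient at value $0$ forces $\int_0^L(\varphi_{1,x})^2=0$ (so $\varphi_1\equiv\text{const}$) \emph{and} $\int_0^L(H+f'(0))\varphi_1^2=0$. If $\varphi_1$ is a nonzero constant, the second integral is $(f'(0)-g'(0))\cdot\varphi_1^2\cdot\big(|(0,\alpha)|+|(\alpha+l,L)|\big)$, which is strictly positive unless the Allee region has zero measure, i.e. unless $\alpha=0$ and $\alpha+l=L$ — but that is excluded since $l<L$ whenever there is a genuine protection zone strictly inside (and even if $\alpha=0$ or $\alpha+l=L$, at least one of the two outer intervals is nondegenerate as long as $l<L$). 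Hence $\lambda_1(\alpha,l)+f'(0)>0$ strictly. One subtlety: if both boundary conditions happen to be Neumann, constants \emph{are} admissible test functions, but the above shows that even then the constant is not a minimizer because the potential term is strictly positive on it; so no case is actually lost.

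Alternatively, and perhaps more in the spirit of the rest of the paper, I would argue by comparison with the restricted Dirichlet problem on the protection zone. By Lemma \ref{lambdafg1}-style domain monotonicity of principal eigenvalues (a smaller domain with Dirichlet conditions raises the eigenvalue), $\lambda_1(\alpha,l)+f'(0)\geq \lambda^{D}_{(\alpha,\alpha+l)}\big(-\partial_{xx}+(H+f'(0))\big)$ is not quite right directionally, so instead I would compare the full operator to $-\partial_{xx}+V_-$ where $V_-$ is a nonnegative potential supported on the outer intervals, and invoke that a Schrödinger operator $-\partial_{xx}+V$ with $V\geq0$, $V\not\equiv0$, under any of the listed boundary conditions, has strictly positive bottom of spectrum (because the ground state cannot be simultaneously constant and annihilated by the potential term). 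The main obstacle is purely bookkeeping: making sure the strict inequality survives in the corner cases $\alpha=0$ and $\alpha+l=L$, which is handled by noting $l<L$ forces at least one outer interval to have positive length, so I would state that hypothesis explicitly at the start of the proof.
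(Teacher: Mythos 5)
Your argument is correct, but it takes a genuinely different route from the paper. The paper's proof is a two-line consequence of machinery it has already set up: by Lemma \ref{lambdafg1} (strict monotonicity of the principal eigenvalue in the potential $H$), $\lambda_1(\alpha,l)>\lambda_{L,f}=\lambda_0(L)-f'(0)$, and then it reads off from Table \ref{lambda0Ltable} that $\lambda_0(L)\geq 0$ for every admissible boundary condition, giving $\lambda_1(\alpha,l)+f'(0)>\lambda_0(L)\geq 0$. You instead argue directly on the Rayleigh quotient of the shifted operator $-\partial_{xx}+\bigl(H+f'(0)\bigr)$: the potential is $0$ on the protection zone and $f'(0)-g'(0)>0$ outside, the boundary contributions $\frac{\alpha_2}{\alpha_1}\varphi(0)^2$ and $\frac{\beta_2}{\beta_1}\varphi(L)^2$ are nonnegative, so the bottom of the spectrum is $\geq 0$, and equality would force the positive principal eigenfunction to be a nonzero constant annihilated by a potential that is strictly positive on a set of positive measure --- impossible. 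The two mechanisms are essentially the same fact packaged differently (your "constant cannot be a minimizer" step is exactly the strictness in Lemma \ref{lambdafg1} combined with $\lambda_0(L)\geq 0$), but your version is self-contained and does not need the case-by-case tabulation of $\lambda_0(L)$, whereas the paper's version is shorter given its prior lemmas and additionally records the slightly stronger bound $\lambda_1(\alpha,l)+f'(0)>\lambda_0(L)$. You are also right to flag the degenerate case $l=L$ (whole domain protected), where under Neumann--Neumann conditions one gets $\lambda_1+f'(0)=\lambda_0(L)=0$ and the strict inequality fails; the paper's proof carries the same implicit restriction, since Lemma \ref{lambdafg1} requires $H\not\equiv -f'(0)$, i.e.\ a nonempty Allee region. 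Your second, "alternative" paragraph adds nothing beyond the first argument and is the only muddled part; I would drop it.
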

\begin{proof}
We know that $\lambda_{L,f}=\lambda_0(L)-f'(0)$ and $\lambda_{L,g}=\lambda_0(L)-g'(0)$.
Lemma \ref{lambdafg1} implies
\begin{equation}\label{lambdacomparison}
\lambda_0(L)-f'(0)=\lambda_{L,f}<\lambda_1(\alpha,l)<\lambda_{L,g}=\lambda_0(L)-g'(0).
\end{equation}
This  indicates that $\lambda_1(\alpha,l)+f'(0)>0$ if $\lambda_0(L)\geq 0$.
Therefore, by Table \ref{lambda0Ltable}, $\lambda_1(\alpha,l)+f'(0)>0$ is always true for boundary conditions  in (\ref{udbcs}).
\end{proof}

In the following, we derive the relation between $\lambda_1(\alpha,l)$ and $\alpha$ when $g'(0)+\lambda_1(\alpha,l)$ has different signs.
Denote
\begin{equation}\label{titldefandg}
\tilde{f}:=\sqrt{f'(0)+\lambda_1(\alpha,l)},\quad \tilde{g}:=\sqrt{|g'(0)+\lambda_1(\alpha,l)|}.
\end{equation}

We first consider the case of

\noindent {\bf (H1).} $g'(0)+\lambda_1(\alpha,l)<0.
$\\
In this case, a simple analysis shows that $\varphi_1$ on $[0,L]$
can be expressed as
\begin{equation}\label{varphi1solution}
\varphi_1(x)=\left\{
\begin{array}{ll}
C_3e^{-\tilde{g}x}+C_4e^{\tilde{g}x}, &
\text{for}\;\;x\in[0,\alpha],\\
C_1\cos\tilde{f}x+C_2\sin\tilde{f}x, &
\text{for}\;\;x\in[\alpha,\alpha+l],\\
C_5e^{-\tilde{g}x}+C_6e^{\tilde{g}x}, &
\text{for}\;\;x\in[\alpha+l,L],
\end{array}
\right.
\end{equation}
where each $C_i$ ($i=1,\cdots,6$) is a constant.
By the boundary conditions of $\varphi_1$ at $0$ and $L$, we have
$$
\begin{array}{l}
\alpha_1\varphi_1'(0)-\alpha_2 \varphi_1(0)=\alpha_1 (-C_3 {\tilde{g}}+C_4 {\tilde{g}})-\alpha_2(C_3+C_4)=0,\\
\beta_1\varphi_1'(L)+\beta_2 \varphi_1(L)=\beta_1(-C_5 {\tilde{g}} e^{-{\tilde{g}}L}+C_6 {\tilde{g}} e^{{\tilde{g}}L})+\beta_2(C_5 e^{-{\tilde{g}}L}+C_6 e^{{\tilde{g}}L})=0,
\end{array}
$$
which imply $C_3=R_1 C_4$ and $C_6=R_2 C_5$ with
 \begin{equation}\label{R12def}
R_1=\frac{\alpha_1 {\tilde{g}}-\alpha_2}{\alpha_1 {\tilde{g}}+\alpha_2},\quad R_2=e^{-2{\tilde{g}}L}\frac{\beta_1 {\tilde{g}}-\beta_2}{\beta_1 {\tilde{g}}+\beta_2}.
\end{equation}
Substituting $C_3$ and $C_6$ into (\ref{varphi1solution}) yields
\begin{equation*}
\varphi_1(x)=\left\{
\begin{array}{ll}
R_1 C_4e^{-\tilde{g}x}+C_4e^{\tilde{g}x}, &
\text{for}\;\;x\in[0,\alpha],\\
C_1\cos\tilde{f}x+C_2\sin\tilde{f}x, &
\text{for}\;\;x\in[\alpha,\alpha+l],\\
C_5e^{-\tilde{g}x}+R_2 C_5e^{\tilde{g}x}, &
\text{for}\;\;x\in[\alpha+l,L],
\end{array}
\right.
\end{equation*}
and
\begin{equation*}
\varphi_1'(x)=\left\{
\begin{array}{ll}
-R_1 C_4 {\tilde{g}}e^{-\tilde{g}x}+C_4 {\tilde{g}} e^{\tilde{g}x}, &
\text{for}\;\;x\in[0,\alpha],\\
-C_1 {\tilde{f}} \sin\tilde{f}x+C_2 {\tilde{f}} \cos\tilde{f}x, &
\text{for}\;\;x\in[\alpha,\alpha+l],\\
-C_5 {\tilde{g}}e^{-\tilde{g}x}+R_2 C_5 {\tilde{g}} e^{\tilde{g}x}, &
\text{for}\;\;x\in[\alpha+l,L].
\end{array}
\right.
\end{equation*}
In view of the fact that $\varphi_1$ is continuously differentiable at
$\alpha$ and $\alpha+l$, we find that
\begin{equation}\label{RR1}
R_1C_4e^{-{\tilde{g}}\alpha}+C_4e^{{\tilde{g}}\alpha}=C_1\cos\tilde{f}\alpha+C_2\sin\tilde{f}\alpha,
\end{equation}
\begin{equation}\label{RR3}
-C_4 R_1 {\tilde{g}}e^{-\tilde{g}\alpha}+C_4 {\tilde{g}}e^{\tilde{g}\alpha}=-C_1 {\tilde{f}}\sin\tilde{f}\alpha+C_2 {\tilde{f}} \cos\tilde{f}\alpha,
\end{equation}
\begin{equation}\label{RR2}
C_5e^{-\tilde{g}(\alpha+l)}+R_2 C_5e^{\tilde{g}(\alpha+l)}=C_1\cos\tilde{f}(\alpha+l)+C_2\sin\tilde{f}(\alpha+l),
\end{equation}
\begin{equation}\label{RR4}
-C_5\tilde{g}e^{-\tilde{g}(\alpha+l)}+C_5 R_2 {\tilde{g}}e^{\tilde{g}(\alpha+l)}
=-C_1 {\tilde{f}}\sin\tilde{f}(\alpha+l)+C_2 {\tilde{f}}\cos\tilde{f}(\alpha+l).
\end{equation}
Then
$\eqref{RR1}\times\eqref{RR2}+\dfrac{\eqref{RR3}\times\eqref{RR4}}{\tilde{f}^2}$
leads to
\begin{equation}\label{RR01}
\begin{array}{rl}
&C_4 C_5(R_1e^{-{\tilde{g}}\alpha}+e^{{\tilde{g}}\alpha})(e^{-\tilde{g}(\alpha+l)}+R_2 e^{\tilde{g}(\alpha+l)})+C_4 C_5\frac{(- R_1 {\tilde{g}}e^{-\tilde{g}\alpha}+ {\tilde{g}}e^{\tilde{g}\alpha})(-\tilde{g}e^{-\tilde{g}(\alpha+l)}+ R_2 {\tilde{g}}e^{\tilde{g}(\alpha+l)})}{{\tilde{f}}^2}\\
=&(C_1\cos\tilde{f}\alpha+C_2\sin\tilde{f}\alpha)(C_1\cos\tilde{f}(\alpha+l)+C_2\sin\tilde{f}(\alpha+l))\\
&+(-C_1 \sin\tilde{f}\alpha+C_2  \cos\tilde{f}\alpha)(-C_1 \sin\tilde{f}(\alpha+l)+C_2 \cos\tilde{f}(\alpha+l))\\
=& (C_1^2+C_2^2)\cos {\tilde{f}}l
\end{array}
\end{equation}
and $\eqref{RR1}\times\eqref{RR4}-\eqref{RR3}\times\eqref{RR2}$ leads to
\begin{equation}\label{RR02}
\begin{array}{rl}
&C_4 C_5(R_1e^{-{\tilde{g}}\alpha}+e^{{\tilde{g}}\alpha})(-\tilde{g}e^{-\tilde{g}(\alpha+l)}+ R_2 {\tilde{g}}e^{\tilde{g}(\alpha+l)})-C_4 C_5(- R_1 {\tilde{g}}e^{-\tilde{g}\alpha}+ {\tilde{g}}e^{\tilde{g}\alpha})(e^{-\tilde{g}(\alpha+l)}+R_2 e^{\tilde{g}(\alpha+l)})\\
=&(C_1\cos\tilde{f}\alpha+C_2\sin\tilde{f}\alpha)(-C_1 {\tilde{f}}\sin\tilde{f}(\alpha+l)+C_2 {\tilde{f}}\cos\tilde{f}(\alpha+l))\\
& -(-C_1 {\tilde{f}}\sin\tilde{f}\alpha+C_2 {\tilde{f}} \cos\tilde{f}\alpha)(C_1\cos\tilde{f}(\alpha+l)+C_2\sin\tilde{f}(\alpha+l))\\
=& -(C_1^2+C_2^2){\tilde{f}} \sin {\tilde{f}}l.
\end{array}
\end{equation}
By simplifying (\ref{RR01}) and (\ref{RR02}), we obtain
that $\lambda_1(\alpha,l)$ satisfies
\begin{equation}\label{RRtanequ}
\begin{array}{rl}
\tan
\tilde{f}l&=\dfrac{\tilde{f}\tilde{g}\left(\left(e^{-\tilde{g}(\alpha+l)}+R_2e^{\tilde{g}(\alpha+l)}\right)(-R_1e^{-\tilde{g}\alpha}+e^{\tilde{g}\alpha})
-(R_1e^{-\tilde{g}\alpha}+e^{\tilde{g}\alpha})\left(-e^{-\tilde{g}(\alpha+l)}+
R_2e^{\tilde{g}(\alpha+l)}\right)\right)}
{\tilde{f}^2(R_1e^{-\tilde{g}\alpha}+e^{\tilde{g}\alpha})\left(e^{-\tilde{g}(\alpha+l)}+R_2e^{\tilde{g}(\alpha+l)}\right)
+\tilde{g}^2(-R_1e^{-\tilde{g}\alpha}+e^{\tilde{g}\alpha})\left(-e^{-\tilde{g}(\alpha+l)}+
R_2e^{\tilde{g}(\alpha+l)}\right)}.
\end{array}
\end{equation}
For simplicity, we introduce the following notations:
\begin{equation}\label{ABCDT1T2def}
\begin{array}{ll}
A=R_1e^{-\tilde{g}\alpha}+e^{\tilde{g}\alpha}, & B=e^{-\tilde{g}(\alpha+l)}+R_2e^{\tilde{g}(\alpha+l)},\\
C=-R_1e^{-\tilde{g}\alpha}+e^{\tilde{g}\alpha}, & D= -e^{-\tilde{g}(\alpha+l)}+
R_2e^{\tilde{g}(\alpha+l)},\\
T_1=BC-AD, & T_2=\tilde{f}^2 AB+\tilde{g}^2 CD,\quad  T_3=BC+AD,\quad T_4=AB{\tilde{f}}^2-CD{\tilde{g}}^2,\\
\hat{R_1}=\D \frac{2\alpha_1 \alpha_2}{(\alpha_1 {\tilde{g}}+\alpha_2)^2},& \hat{R_2}=\D \frac{2\beta_1 \beta_2}{(\beta_1 {\tilde{g}}+\beta_2)^2}.
\end{array}
\end{equation}
Then  (\ref{RRtanequ}) becomes
\begin{equation}\label{RRtanequ1}
\tan {\tilde{f}}l=\frac{{\tilde{f}}{\tilde{g}}(BC-AD)}{{\tilde{f}}^2
AB+{\tilde{g}}^2 CD}=\frac{{\tilde{f}}{\tilde{g}} T_1}{T_2}.
\end{equation}

Now we consider the monotonicity of $\lambda_1(\alpha,l)$ with respect to $\alpha$.
Without causing confusion, for simplicity we will  write $\lambda_1(\alpha,l)$ just as $\lambda_1$ below. Note that
\begin{equation}\label{ABCDdiff}
\begin{array}{ll}
A_\alpha=C({\tilde{g}}_\alpha \alpha+{\tilde{g}})+R_{1,\alpha}e^{-{\tilde{g}}\alpha},&
B_\alpha=D({\tilde{g}}_\alpha(\alpha+l)+{\tilde{g}})+R_{2,\alpha}e^{{\tilde{g}}(\alpha+l)},\\
C_\alpha=A({\tilde{g}}_\alpha \alpha+{\tilde{g}})-R_{1,\alpha}e^{-{\tilde{g}}\alpha},&
D_\alpha=B({\tilde{g}}_\alpha(\alpha+l)+{\tilde{g}})+R_{2,\alpha}e^{{\tilde{g}}(\alpha+l)},\\
R_{1,\alpha}=\hat{R_1}{\tilde{g}}_\alpha,&
R_{2,\alpha}=(-2LR_2+e^{-2{\tilde{g}}L}\hat{R_2}){\tilde{g}}_\alpha,\\
{\tilde{f}}_\alpha={\tilde{f}}_{\lambda_1} \cdot
{\lambda_1}_\alpha=\D \frac{{\lambda_1}_\alpha}{2{\tilde{f}}}, &
{\tilde{g}}_\alpha={\tilde{g}}_{\lambda_1} \cdot
{\lambda_1}_\alpha=\D -\frac{{\lambda_1}_\alpha}{2{\tilde{g}}},
\end{array}
\end{equation}
where subscripts $_\alpha$ and $_{\lambda_1}$ represent partial derivatives with respect to $\alpha$ and $\lambda_1$, respectively.
Differentiating (\ref{RRtanequ1}) with respect $\alpha$ on two sides yields
$$
\begin{array}{l}
\sec^2 {\tilde{f}}l \cdot l \cdot
{\tilde{f}}_\alpha\\
=\D \frac{(({\tilde{g}}_\alpha {\tilde{f}}
+{\tilde{f}}_\alpha {\tilde{g}})T_1+{\tilde{g}}{\tilde{f}}
T_{1,\alpha})T_2-(2{\tilde{f}} {\tilde{f}}_\alpha AB+2{\tilde{g}}
{\tilde{g}}_\alpha CD +{\tilde{f}}^2
A_\alpha B+{\tilde{f}}^2
A B_\alpha+{\tilde{g}}^2C_\alpha D+{\tilde{g}}^2CD_\alpha ) {\tilde{g}}{\tilde{f}}T_1}{T_2^2},
\end{array}
$$
which implies
$$
\begin{array}{rcl}
& &\sec^2 {\tilde{f}}l \cdot l  T_2^2 \frac{{\lambda_1}_\alpha}{2{\tilde{f}}} \\
&=&({\tilde{g}}_\alpha {\tilde{f}} +{\tilde{f}}_\alpha {\tilde{g}})T_1T_2+{\tilde{f}}{\tilde{g}} T_2(
(CD-AB){\tilde{g}}_\alpha l-(B+D)e^{-{\tilde{g}}\alpha}R_{1,\alpha}+(C-A)e^{{\tilde{g}}(\alpha+l)}R_{2,\alpha}
)\\
& &-2{\tilde{f}}^2{\tilde{g}} {\tilde{f}}_\alpha AB T_1
-2{\tilde{f}}{\tilde{g}}^2 {\tilde{g}}_\alpha CD T_1\\
&&
-{\tilde{f}}^3{\tilde{g}}T_1 B(R_{1,\alpha}e^{-{\tilde{g}}\alpha}+C({\tilde{g}}_\alpha \alpha+{\tilde{g}}))
-{\tilde{f}}^3 {\tilde{g}} T_1 A (R_{2,\alpha}e^{{\tilde{g}}(\alpha+l)}+D({\tilde{g}}_\alpha(\alpha+l)+{\tilde{g}}))
\\
&&
 -{\tilde{f}}{\tilde{g}}^3T_1 D(A({\tilde{g}}_\alpha \alpha+{\tilde{g}})-R_{1,\alpha}e^{-{\tilde{g}}\alpha})
 -{\tilde{f}}{\tilde{g}}^3 T_1 C(B({\tilde{g}}_\alpha(\alpha+l)+{\tilde{g}})+R_{2,\alpha}e^{{\tilde{g}}(\alpha+l)})
 \\
&=& (AB{\tilde{f}}^2-CD{\tilde{g}}^2)({\tilde{g}}_\alpha {\tilde{f}}-{\tilde{f}}_\alpha {\tilde{g}})T_1-T_1 {\tilde{g}}_\alpha \alpha(BC+AD){\tilde{f}}{\tilde{g}}({\tilde{f}}^2+{\tilde{g}}^2)\\
&&-T_1 {\tilde{g}}(BC+AD){\tilde{f}}{\tilde{g}}({\tilde{f}}^2+{\tilde{g}}^2)
-{\tilde{f}}{\tilde{g}}(B-D)(B+D)(A^2{\tilde{f}}^2+C^2{\tilde{g}}^2)l {\tilde{g}}_\alpha\\
&& -{\tilde{f}}{\tilde{g}}(A+C)(B^2{\tilde{f}}^2+D^2{\tilde{g}}^2)R_{1,\alpha}e^{-{\tilde{g}}\alpha}-{\tilde{f}}{\tilde{g}}(B-D)(A^2{\tilde{f}}^2+C^2{\tilde{g}}^2)R_{2,\alpha}e^{{\tilde{g}}(\alpha+l)}\\
&=&(AB{\tilde{f}}^2-CD{\tilde{g}}^2)(-\frac{{\tilde{f}}}{2{\tilde{g}}}-\frac{{\tilde{g}}}{2{\tilde{f}}})
T_1{\lambda_1}_\alpha\\
&&
+T_1 \frac{1}{2{\tilde{g}}}{\lambda_1}_\alpha \alpha(BC+AD){\tilde{f}}{\tilde{g}}({\tilde{f}}^2+{\tilde{g}}^2)-T_1 (BC+AD){\tilde{f}}{\tilde{g}}^2({\tilde{f}}^2+{\tilde{g}}^2)\\&&
-{\tilde{f}}{\tilde{g}}(B-D)(B+D)(A^2{\tilde{f}}^2+C^2{\tilde{g}}^2)l (-\frac{{\lambda_1}_\alpha}{2{\tilde{g}}}) -{\tilde{f}}{\tilde{g}}(A+C)(B^2{\tilde{f}}^2+D^2{\tilde{g}}^2)\hat{R_1}(-\frac{{\lambda_1}_\alpha}{2{\tilde{g}}})e^{-{\tilde{g}}\alpha}\\
&&
-{\tilde{f}}{\tilde{g}}(B-D)(A^2{\tilde{f}}^2+C^2{\tilde{g}}^2)(-2LR_2+e^{-2{\tilde{g}}L}\hat{R_2})(-\frac{{\lambda_1}_\alpha}{2{\tilde{g}}})e^{{\tilde{g}}(\alpha+l)}\\
&=&{\lambda_1}_\alpha
[-(\frac{{\tilde{f}}}{2{\tilde{g}}}+\frac{{\tilde{g}}}{2{\tilde{f}}})(AB{\tilde{f}}^2-CD{\tilde{g}}^2)
T_1
+\frac{{\tilde{f}}({\tilde{f}}^2+{\tilde{g}}^2)}{2}T_1  \alpha(BC+AD)
+2{\tilde{f}}(A^2{\tilde{f}}^2+C^2{\tilde{g}}^2)(l-L) R_2 \\&& +{\tilde{f}}(B^2{\tilde{f}}^2+D^2{\tilde{g}}^2)\hat{R_1}
+{\tilde{f}}(A^2{\tilde{f}}^2+C^2{\tilde{g}}^2)e^{-2{\tilde{g}}L}\hat{R_2}]
-{\tilde{f}}{\tilde{g}}^2({\tilde{f}}^2+{\tilde{g}}^2)T_1 (BC+AD).
\end{array}
$$
By (\ref{RRtanequ1}), we also have
$$\sec^2
{\tilde{f}l} T_2^2=(1+\tan^2{\tilde{f}l})T_2^2=T_2^2+{\tilde{f}}^2{\tilde{g}}^2 T_1^2.
$$
Then we finally obtain
\begin{equation}\label{lambdaalphagl0}
\begin{array}{l}
 {\lambda_1}_\alpha=
\D\frac{{\tilde{f}}{\tilde{g}}^2({\tilde{f}}^2
+{\tilde{g}}^2)T_1
T_3}{E_{{\lambda_1}_\alpha}}
\end{array}
\end{equation}
with $$
\begin{array}{rcl}
E_{{\lambda_1}_\alpha}&=&-(\frac{{\tilde{f}}}{2{\tilde{g}}}
+\frac{{\tilde{g}}}{2{\tilde{f}}})T_1T_4+ \frac{{\tilde{f}}({\tilde{f}}^2+{\tilde{g}}^2)}{2} T_1\alpha T_3
-2{\tilde{f}} (A^2{\tilde{f}}^2+C^2{\tilde{g}}^2)(L-l)R_2 +{\tilde{f}}(B^2{\tilde{f}}^2+D^2{\tilde{g}}^2)\hat{R_1}\\
&&
+{\tilde{f}}(A^2{\tilde{f}}^2+C^2{\tilde{g}}^2)e^{-2{\tilde{g}}L}\hat{R_2}
-\frac{ l(T_2^2+{\tilde{f}}^2{\tilde{g}}^2 T_1^2)}{2{\tilde{f}}}.
\end{array}$$
We conclude the above analysis in the following result.
\begin{Lemma}\label{lambdaalphataneqnsgl0}
{\rm
If $g'(0)+\lambda_1(\alpha,l)<0$, then the principal eigenvalue $\lambda_1(\alpha,l)$ of (\ref{model1dgelineig}) satisfies (\ref{RRtanequ}) and (\ref{lambdaalphagl0}), where the related parameters (or notations) are defined in (\ref{titldefandg}), (\ref{R12def}) and (\ref{ABCDT1T2def}).
}
\end{Lemma}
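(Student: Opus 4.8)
The plan is to make rigorous the chain of explicit computations displayed immediately before the statement; the only conceptual inputs needed are Lemma~\ref{lambda1fp0} and the differentiability of $\lambda_1(\alpha,l)$ in $\alpha$ quoted from \cite{Hessbook}. First I would record the solution representation. Under (H1) we have $g'(0)+\lambda_1(\alpha,l)<0$, while Lemma~\ref{lambda1fp0} always yields $f'(0)+\lambda_1(\alpha,l)>0$; hence, with $\tilde f,\tilde g$ as in \eqref{titldefandg}, the eigenvalue equation in \eqref{model1dgelineig} reduces to $\varphi_{1,xx}=-\tilde f^2\varphi_1$ on $(\alpha,\alpha+l)$ and to $\varphi_{1,xx}=\tilde g^2\varphi_1$ on $(0,\alpha)\cup(\alpha+l,L)$. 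Solving these constant-coefficient ODEs on each subinterval produces the piecewise formula \eqref{varphi1solution} with six constants, and term-by-term differentiation gives the companion expression for $\varphi_1'$.

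Next I would impose the boundary and interface conditions. Substituting \eqref{varphi1solution} into the Robin conditions at $x=0$ and $x=L$ gives two linear relations, and since $\tilde g>0$ together with $\alpha_1^2+\alpha_2^2\neq0$, $\beta_1^2+\beta_2^2\neq0$ the quantities $\alpha_1\tilde g+\alpha_2$ and $\beta_1\tilde g+\beta_2$ are strictly positive, so these relations solve uniquely for $C_3=R_1C_4$ and $C_6=R_2C_5$ with $R_1,R_2$ as in \eqref{R12def}. Because $\varphi_1\in C^1([0,L])$, matching $\varphi_1$ and $\varphi_1'$ at $\alpha$ and at $\alpha+l$ produces the four equations \eqref{RR1}--\eqref{RR4}. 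Forming the combinations $\eqref{RR1}\times\eqref{RR2}+\tilde f^{-2}\eqref{RR3}\times\eqref{RR4}$ and $\eqref{RR1}\times\eqref{RR4}-\eqref{RR3}\times\eqref{RR2}$ and invoking the addition formulas for $\sin$ and $\cos$, the right-hand sides collapse to $(C_1^2+C_2^2)\cos\tilde f l$ and $-(C_1^2+C_2^2)\tilde f\sin\tilde f l$; dividing by the common factor $C_1^2+C_2^2$ then removes $C_1$ and $C_2$. This division is legitimate, since $C_1=C_2=0$ would force $\varphi_1\equiv0$ on $(\alpha,\alpha+l)$ and then, through \eqref{RR1}--\eqref{RR4}, $C_4=C_5=0$, so that $\varphi_1\equiv0$, impossible for an eigenfunction. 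The resulting identity for $\lambda_1(\alpha,l)$ is \eqref{RRtanequ}, and introducing the abbreviations $A,B,C,D,T_1,T_2$ of \eqref{ABCDT1T2def} rewrites it as \eqref{RRtanequ1}.

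Finally I would differentiate \eqref{RRtanequ1} with respect to $\alpha$. Using $\tilde f_\alpha=\lambda_{1,\alpha}/(2\tilde f)$ and $\tilde g_\alpha=-\lambda_{1,\alpha}/(2\tilde g)$ together with the derivatives $A_\alpha,B_\alpha,C_\alpha,D_\alpha,R_{1,\alpha},R_{2,\alpha}$ listed in \eqref{ABCDdiff}, the differentiated relation is linear in $\lambda_{1,\alpha}$; substituting $\sec^2\tilde f l=1+\tan^2\tilde f l=(T_2^2+\tilde f^2\tilde g^2T_1^2)/T_2^2$ from \eqref{RRtanequ1}, collecting all terms that carry the factor $\lambda_{1,\alpha}$, and solving, while absorbing products of $A,B,C,D$ into $T_3=BC+AD$ and $T_4=AB\tilde f^2-CD\tilde g^2$, delivers \eqref{lambdaalphagl0} with the stated $E_{\lambda_{1,\alpha}}$.

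The one genuinely laborious step is this last differentiation: one has to isolate the single term produced by differentiation that does \emph{not} carry the factor $\lambda_{1,\alpha}$---the piece $-\tilde f\tilde g^2(\tilde f^2+\tilde g^2)T_1T_3$, which becomes the numerator of \eqref{lambdaalphagl0}---and then repeatedly apply elementary identities such as $(B+D)(B-D)=B^2-D^2$ and recognize recurring groupings like $A^2\tilde f^2+C^2\tilde g^2$ and $B^2\tilde f^2+D^2\tilde g^2$ in order to compress the coefficient of $\lambda_{1,\alpha}$ into $E_{\lambda_{1,\alpha}}$. This reduction is entirely mechanical, so I would relegate the full computation to the appendix (as is done for the two companion cases), the earlier steps being routine constant-coefficient ODE bookkeeping together with the trigonometric addition formulas.
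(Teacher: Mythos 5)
Your proposal is correct and follows essentially the same route as the paper: solve the constant-coefficient ODE piecewise (trigonometric on the protection zone since $f'(0)+\lambda_1>0$ by Lemma \ref{lambda1fp0}, exponential outside since (H1) holds), eliminate $C_3,C_6$ via the boundary conditions, use $C^1$-matching at $\alpha$ and $\alpha+l$ to obtain \eqref{RR1}--\eqref{RR4}, form the two product combinations to reach \eqref{RRtanequ}, and then implicitly differentiate \eqref{RRtanequ1} in $\alpha$ to extract \eqref{lambdaalphagl0}. Your added justification that $C_1^2+C_2^2\neq 0$ (and implicitly $C_4C_5\neq 0$) is a small point the paper leaves unstated, but otherwise the two arguments coincide.
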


Next we consider the case of \\
\noindent {\bf (H2).} $g'(0)+\lambda_1(\alpha,l)=0.
$\\
 By following a similar process as in the case of (H1), we can obtain the following results; detailed calculations are included in Appendix \ref{applemmalambdaalphataneqnsgeq0}.

 \begin{Lemma}\label{lambdaalphataneqnsgeq0}
{\rm
If $g'(0)+\lambda_1(\alpha,l)=0$, then the principal eigenvalue $\lambda_1(\alpha,l)$ of (\ref{model1dgelineig}) satisfies
the following relations.
\begin{enumerate}
   \item[(i)] When Dirichlet boundary conditions are applied at $x=0,L$, we have    \begin{equation}\label{DDtanequg0}
\begin{array}{rl}
\tan\tilde{f}l
&=\dfrac{\tilde{f}(L-l)}{{\tilde{f}}^2\alpha(L-\alpha-l)-1},
\end{array}
\end{equation}
and
\begin{equation}\label{DDlambdaalphag0}
{\lambda_1}_\alpha=\frac{{\tilde{f}}(L-l)(-{\tilde{f}}^2(L-\alpha-l)+{\tilde{f}}^2\alpha)}{({\tilde{f}}^2\alpha(L-\alpha-l)-1)^2\frac{\sec^2 {\tilde{f}}l\cdot l}{2{\tilde{f}}}-\frac{1}{2{\tilde{f}}}(L-l)({\tilde{f}}^2\alpha(L-\alpha-l)-1)+(L-l){\tilde{f}}\alpha(L-\alpha-l)}.
\end{equation}

   \item[(ii)]  When Neumann boundary condition is applied at $x=0$ and Dirichlet boundary condition is applied at $x=L$, we have
       \begin{equation}\label{NDtanequg0}
\begin{array}{rl}
\tan\tilde{f}l
&=\dfrac{1}{\tilde{f}(L-\alpha-l)},
\end{array}
\end{equation}
and
\begin{equation}\label{NDlambdaalphag0}
{\lambda_1}_\alpha
=\frac{2\tilde{f}^2}{\tilde{f}^2 sec^2\tilde{f}l\cdot l\cdot (L-\alpha-l)^2+L-\alpha-l}.
\end{equation}
 \end{enumerate}
}
\end{Lemma}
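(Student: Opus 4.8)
The plan is to use the defining relation \eqref{titldefandg}: hypothesis {\bf (H2)} says exactly that $\tilde{g}=\sqrt{|g'(0)+\lambda_1(\alpha,l)|}=0$, so on each outer interval $[0,\alpha]$ and $[\alpha+l,L]$ the equation in \eqref{model1dgelineig} degenerates to $-\varphi_{1,xx}=0$ and $\varphi_1$ must be affine there, while on $[\alpha,\alpha+l]$ it is still $C_1\cos\tilde{f}x+C_2\sin\tilde{f}x$ (and $\tilde{f}>0$ by Lemma \ref{lambda1fp0}). So I would first write
\[
\varphi_1(x)=\begin{cases} \text{affine},& x\in[0,\alpha],\\ C_1\cos\tilde{f}x+C_2\sin\tilde{f}x,& x\in[\alpha,\alpha+l],\\ \text{affine},& x\in[\alpha+l,L],\end{cases}
\]
and pin the affine pieces down with the boundary data: in case (i) (Dirichlet at $x=0$ and $x=L$), $\varphi_1(x)=k_1x$ on $[0,\alpha]$ and $\varphi_1(x)=k_2(L-x)$ on $[\alpha+l,L]$; in case (ii) (Neumann at $x=0$, Dirichlet at $x=L$), $\varphi_1\equiv k_1$ on $[0,\alpha]$ and $\varphi_1(x)=k_2(L-x)$ on $[\alpha+l,L]$.

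Next I would impose continuity of $\varphi_1$ and of $\varphi_1'$ at the interfaces $x=\alpha$ and $x=\alpha+l$, giving four homogeneous linear relations among $k_1,k_2,C_1,C_2$. Eliminating $k_1$ from the pair at $\alpha$ gives $(\text{value})=\alpha\cdot(\text{slope})$ there, and eliminating $k_2$ from the pair at $\alpha+l$ gives $(\text{value})=-(L-\alpha-l)\cdot(\text{slope})$ there (in case (ii) the condition at $\alpha$ simplifies to $C_2=C_1\tan\tilde{f}\alpha$). These reduce to two homogeneous linear equations in $(C_1,C_2)$, and a nontrivial eigenfunction forces the $2\times 2$ determinant to vanish; expanding it and collapsing the products of sines and cosines of $\tilde{f}\alpha$ and $\tilde{f}(\alpha+l)$ via the angle-addition identities produces $(1-\tilde{f}^2\alpha(L-\alpha-l))\sin\tilde{f}l=-\tilde{f}(L-l)\cos\tilde{f}l$ in case (i), i.e.\ \eqref{DDtanequg0}, and $\cos\tilde{f}l=\tilde{f}(L-\alpha-l)\sin\tilde{f}l$ in case (ii), i.e.\ \eqref{NDtanequg0}. (As a consistency check one may instead let $\tilde{g}\to 0^+$ in \eqref{RRtanequ} after inserting the Dirichlet/Neumann values $R_1=\mp 1$, $R_2\to-1$, $\hat R_1=0$ from \eqref{R12def}--\eqref{ABCDT1T2def}: the surviving leading terms in $\tilde{g}$ reproduce the same two formulas.)

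For the derivative formulas I would differentiate the characteristic equation \eqref{DDtanequg0} (resp.\ \eqref{NDtanequg0}) implicitly in $\alpha$ with $l$ fixed, using $\tilde{f}_\alpha=\lambda_{1,\alpha}/(2\tilde{f})$ from \eqref{ABCDdiff}. Writing \eqref{DDtanequg0} as $\tan\tilde{f}l=N/M$ with $N=\tilde{f}(L-l)$ and $M=\tilde{f}^2\alpha(L-\alpha-l)-1$, the left side becomes $\sec^2(\tilde{f}l)\,l\,\lambda_{1,\alpha}/(2\tilde{f})$ and the right side $(N_\alpha M-NM_\alpha)/M^2$; after clearing $M^2$, one collects every term proportional to $\lambda_{1,\alpha}$ (coming from $N_\alpha$, from the $\tilde{f}_\alpha$-part of $M_\alpha$, and from the left side) onto one side, leaving the purely explicit piece $-N\tilde{f}^2(L-2\alpha-l)=\tilde{f}(L-l)\big(\tilde{f}^2\alpha-\tilde{f}^2(L-\alpha-l)\big)$, and divides to get \eqref{DDlambdaalphag0}; case (ii) is the same computation with the simpler $N\equiv 1$, $M=\tilde{f}(L-\alpha-l)$, and yields \eqref{NDlambdaalphag0}. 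The main obstacle is purely bookkeeping: keeping straight that $\alpha$ enters both explicitly (as a coefficient of the affine pieces) and implicitly through $\tilde{f}=\tilde{f}(\lambda_1(\alpha,l))$, and carrying the trigonometric reduction of the vanishing-determinant condition far enough that it collapses to the stated one-line identities; as before, $\sec^2\theta=1+\tan^2\theta$ together with resubstitution of \eqref{DDtanequg0}/\eqref{NDtanequg0} is what makes the final quotients close up. The detailed computation is carried out in Appendix \ref{applemmalambdaalphataneqnsgeq0}.
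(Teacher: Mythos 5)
Your proposal is correct and follows essentially the same route as the paper's Appendix A.1: since $\tilde{g}=0$ under {\bf (H2)}, the eigenfunction is affine on $[0,\alpha]$ and $[\alpha+l,L]$ and trigonometric in between, the boundary and $C^1$-interface conditions are used to eliminate the constants and produce the characteristic equations \eqref{DDtanequg0} and \eqref{NDtanequg0}, and implicit differentiation in $\alpha$ with $\tilde{f}_\alpha=\lambda_{1,\alpha}/(2\tilde{f})$ gives \eqref{DDlambdaalphag0} and \eqref{NDlambdaalphag0}. The only cosmetic difference is that you impose a vanishing $2\times2$ determinant in $(C_1,C_2)$ where the paper forms the product combinations $\eqref{RR1ap}\times\eqref{RR2ap}+\eqref{RR3ap}\times\eqref{RR4ap}/\tilde{f}^2$ and $\eqref{RR1ap}\times\eqref{RR4ap}-\eqref{RR3ap}\times\eqref{RR2ap}$; these are equivalent eliminations.
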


Finally, we consider the case of\\
\noindent {\bf (H3).} $g'(0)+\lambda_1(\alpha,l)>0.
$\\
 Similarly as before, we can obtain the following results; detailed calculations are included in Appendix \ref{applemmalambdaalphataneqnsgg0}.

 \begin{Lemma}\label{lambdaalphataneqnsgg0}
{\rm
If $g'(0)+\lambda_1(\alpha,l)>0$, then the principal eigenvalue $\lambda_1(\alpha,l)$ of (\ref{model1dgelineig}) satisfies the following relations.
\begin{enumerate}
   \item[(i)] When Dirichlet boundary conditions are applied at $x=0,L$,  we have
 \begin{equation}\label{DDtanequgg0}
\begin{array}{rl}
\tan\tilde{f}l
&=\dfrac{\tilde{f}\tilde{g}\sin \tilde{g}(L-l)}{\bar{T}_{dd}},
\end{array}
\end{equation}
and
\begin{equation}\label{DDlambdaalphagg0}
{\lambda_1}_\alpha=\frac{({\tilde{f}}{\tilde{g}}^4-{\tilde{f}}^3{\tilde{g}}^2)\sin {\tilde{g}}(L-l)\sin {\tilde{g}}(L-2\alpha-l)}{E_{dd}},
\end{equation}
where $$
\begin{array}{rcl}
\bar{T}_{dd}&=&{\tilde{f}}^2\sin \tilde{g}\alpha\cdot \sin \tilde{g}(L-\alpha-l)-{\tilde{g}}^2 \cos \tilde{g}\alpha \cdot\cos \tilde{g}(L-\alpha-l),\\
E_{dd}&=&\frac{\bar{T}_{dd}^2 \sec^2 {\tilde{f}}l\cdot l}{2{\tilde{f}}}-\frac{1}{2}{\tilde{f}}{\tilde{g}}\sin {\tilde{g}}(L-l)\cos {\tilde{g}}(L-l)-\frac{1}{2{\tilde{g}}}{\tilde{f}}^3\sin {\tilde{g}}(L-l)\sin {\tilde{g}}\alpha \sin {\tilde{g}}(L-\alpha-l)\\
&&+\frac{1}{2{\tilde{f}}}{\tilde{g}}^3\sin {\tilde{g}}(L-l)\cos {\tilde{g}}\alpha \cos {\tilde{g}}(L-\alpha-l)+\frac{1}{2}{\tilde{f}}\tilde{g}^2(L-l)\cos^2 {\tilde{g}}\alpha+\frac{1}{2}{\tilde{f}}^2{\tilde{g}}(L-l)\sin^2{\tilde{g}}\alpha\\
&&+\frac{1}{2}({\tilde{f}}^3-\tilde{f}{\tilde{g}}^2)\alpha \sin {\tilde{g}}(L-l) \sin {\tilde{g}}(L-2\alpha-l).
\end{array}
$$

   \item[(ii)]  When Neumann boundary condition is applied at $x=0$ and Dirichlet boundary condition is applied at $x=L$,   we have
      \begin{equation}\label{NDtanequgg0}
\begin{array}{rl}
\tan\tilde{f}l
&=\dfrac{\tilde{f}\tilde{g}\cos \tilde{g}(L-l)}{\bar{T}_{nd}},
\end{array}
\end{equation}
and
\begin{equation}\label{NDlambdaalphagg0}
{\lambda_1}_\alpha=\frac{\cos {\tilde{g}}(L-l)\cos {\tilde{g}}(L-2\alpha-l)({\tilde{f}}^3{\tilde{g}}^2-{\tilde{f}}{\tilde{g}}^4)}{E_{nd}},
\end{equation}
where $$
\begin{array}{rcl}
\bar{T}_{nd}&=&{\tilde{f}}^2\cos \tilde{g}\alpha\cdot \sin \tilde{g}(L-\alpha-l)+{\tilde{g}}^2 \sin \tilde{g}\alpha \cdot\cos \tilde{g}(L-\alpha-l),\\
E_{nd}&=&\frac{\bar{T}_{nd}^2 \sec^2 {\tilde{f}}l\cdot l}{2{\tilde{f}}}+\frac{1}{2}{\tilde{f}}{\tilde{g}}\cos {\tilde{g}}(L-l)\sin {\tilde{g}}(L-l)-\frac{1}{2{\tilde{g}}}{\tilde{f}}^3\cos {\tilde{g}}(L-l)\cos {\tilde{g}}\alpha \sin {\tilde{g}}(L-\alpha-l)\\
&&-\frac{1}{2{\tilde{f}}}{\tilde{g}}^3\cos {\tilde{g}}(L-l)\sin {\tilde{g}}\alpha \cos {\tilde{g}}(L-\alpha-l)+\frac{1}{2}{\tilde{f}}^3(L-l)\cos^2 {\tilde{g}}\alpha+\frac{1}{2}{\tilde{f}}{\tilde{g}}^2(L-l)\sin^2{\tilde{g}}\alpha\\
&&-\frac{1}{2}({\tilde{f}}^3-\tilde{f}{\tilde{g}}^2)\alpha \cos {\tilde{g}}(L-l) \cos {\tilde{g}}(L-2\alpha-l).
\end{array}
$$
 \end{enumerate}
 }
\end{Lemma}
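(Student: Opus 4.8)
The argument is the exact analogue of the computation already carried out for case (H1), with the hyperbolic fundamental solutions $e^{\pm\tilde g x}$ replaced by the trigonometric pair $\cos\tilde g x,\ \sin\tilde g x$, since now $g'(0)+\lambda_1(\alpha,l)>0$ makes $\tilde g$ the real frequency of the outer pieces; on the protection zone the piece is still $C_1\cos\tilde f x+C_2\sin\tilde f x$ because $f'(0)+\lambda_1(\alpha,l)>0$ by Lemma~\ref{lambda1fp0}. So the plan is: (1) write $\varphi_1$ on the three subintervals as cosine--sine combinations with six constants; (2) impose the boundary conditions at $x=0,L$ to reduce the constants; (3) impose $C^1$-matching at $\alpha$ and $\alpha+l$ and eliminate the remaining constants to get \eqref{DDtanequgg0} and \eqref{NDtanequgg0}; (4) differentiate the characteristic identity implicitly in $\alpha$ to get \eqref{DDlambdaalphagg0} and \eqref{NDlambdaalphagg0}.

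For step (2), in part (i) the conditions $\varphi_1(0)=\varphi_1(L)=0$ leave $\varphi_1(x)=a\sin\tilde g x$ on $[0,\alpha]$ and $\varphi_1(x)=b\sin\tilde g(L-x)$ on $[\alpha+l,L]$; in part (ii) the Neumann condition $\varphi_1'(0)=0$ leaves $\varphi_1(x)=a\cos\tilde g x$ on $[0,\alpha]$, while $\varphi_1(L)=0$ again leaves $\varphi_1(x)=b\sin\tilde g(L-x)$ on $[\alpha+l,L]$. For step (3), matching $\varphi_1$ and $\varphi_1'$ at $\alpha$ and at $\alpha+l$ gives four linear equations in $a,b,C_1,C_2$; forming the two combinations analogous to those that produced \eqref{RR01} and \eqref{RR02} makes $C_1^2+C_2^2$ factor out on the right through $\cos\tilde f l$ and $\tilde f\sin\tilde f l$, while the left-hand sides collapse by the addition formulas for $\cos\tilde g(\cdot)$ and $\sin\tilde g(\cdot)$; the ratio of the two then gives the stated $\tan\tilde f l$, with $\bar T_{dd}$, $\bar T_{nd}$ exactly the denominators that appear.

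For step (4), differentiate the characteristic identity with respect to $\alpha$, using $\sec^2\tilde f l=1+\tan^2\tilde f l$ together with the chain-rule relations $\tilde f_\alpha=\lambda_{1,\alpha}/(2\tilde f)$ and $\tilde g_\alpha=\lambda_{1,\alpha}/(2\tilde g)$ --- note that the sign of $\tilde g_\alpha$ is opposite to the (H1) case because here $\tilde g^2=g'(0)+\lambda_1$ rather than $-(g'(0)+\lambda_1)$. Collecting all terms proportional to $\lambda_{1,\alpha}$, dividing through, and replacing $\sec^2\tilde f l\cdot\bar T_{dd}^2$ by $\bar T_{dd}^2+\tilde f^2\tilde g^2\sin^2\tilde g(L-l)$ (resp. $\bar T_{nd}^2+\tilde f^2\tilde g^2\cos^2\tilde g(L-l)$) in the denominator yields the displayed $E_{dd}$ and $E_{nd}$. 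Since existence and uniqueness of $\lambda_1(\alpha,l)$ and of the positive eigenfunction $\varphi_1\in C^2([0,L]\setminus\{\alpha,\alpha+l\})\cap C^1([0,L])$ are already in hand, there is nothing to prove beyond verifying these algebraic identities; these are the details recorded in Appendix~\ref{applemmalambdaalphataneqnsgg0}.

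The main obstacle is purely the length and organization of the trigonometric algebra: tracking the numerous cross terms when forming the two combinations of the four matching equations, and then arranging the $\lambda_{1,\alpha}$-coefficients after implicit differentiation so that they assemble into the compact closed forms $E_{dd}$, $E_{nd}$ rather than an unwieldy sum. One should also check in passing that the denominators $\bar T_{dd}$, $\bar T_{nd}$ (equivalently $E_{dd}$, $E_{nd}$) do not vanish, which holds because $\lambda_1(\alpha,l)$ is a simple eigenvalue and $\varphi_1\not\equiv 0$ with $\varphi_1>0$ in the interior.
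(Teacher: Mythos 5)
Your proposal follows essentially the same route as the paper's Appendix~\ref{applemmalambdaalphataneqnsgg0}: trigonometric fundamental solutions on the outer intervals (your $a\sin\tilde g x$, $b\sin\tilde g(L-x)$, $a\cos\tilde g x$ are just normalized versions of the paper's $C_3,C_4,C_5,C_6$ after the boundary reductions), the same two combinations of the four $C^1$-matching equations to isolate $\cos\tilde f l$ and $\tilde f\sin\tilde f l$, and implicit differentiation with the correctly flipped sign $\tilde g_\alpha=\lambda_{1,\alpha}/(2\tilde g)$. The only cosmetic discrepancy is that the displayed $E_{dd}$ and $E_{nd}$ actually retain the $\sec^2\tilde f l$ factor rather than using the identity $\sec^2\tilde f l=1+\tan^2\tilde f l$ to eliminate it, but that is an equivalent rewriting and does not affect correctness.
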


\begin{Remark}\label{remarksec2}
{\rm
If Neumann boundary conditions are applied at $x=0$ and $x=L$, then by the fact of $\lambda_0(L)=0$ and Lemma \ref{lambdafg1} (or (\ref{lambdacomparison})), we have $\lambda_1(\alpha,l)+g'(0)<\lambda_0(L)=0$. Therefore, (H2) or (H3) cannot occur in this case.
}
\end{Remark}

\subsection{Population persistence under specific boundary
conditions}

In this subsection, we obtain precise conditions for population persistence for model \eqref{model1dge}-\eqref{udbcs} under different boundary conditions.

We first introduce the following relations that will be used in the analysis:
 \begin{equation}\label{parametersign1}
\begin{array}{l}
\alpha\geq0,\, l>0, \alpha+l\leq L, \, L-l\geq 0,\,
{\tilde{f}}>0,\, {\tilde{g}}>0,
\end{array}
\end{equation}
and \begin{equation}\label{parametersign2}
\begin{array}{l}
\mbox{ under (H1), } -1\leq R_1\leq 1,\,-1\leq R_2\leq 1,\,
A>0,\,
B>0,\, C>0, \, D<0,\,
 T_1>0, \,T_4>0,
\end{array}
\end{equation}
 where $\tilde{f}$, $\tilde{g}$, $A$, $B$, $C$, $D$, $T_1-T_4$, $R_1$, $R_2$,  $\hat{R_1}$, and $\hat{R_2}$ are defined in (\ref{titldefandg}), (\ref{R12def}), and  (\ref{ABCDT1T2def}).

{\bf Case 1. Neumann boundary conditions at $x=0, L$}.
By Remark \ref{remarksec2},  $\lambda_1(\alpha,l)+g'(0)<0$ and only Lemma \ref{lambdaalphataneqnsgl0} applies in this case.
Note that
\begin{equation}\label{NNABCDT1T2def}
\begin{array}{ll}
R_1=1,& R_2=e^{-2{\tilde{g}}L},\quad \hat{R_1}=0,\quad  \hat{R_2}=0,\\
A=e^{-\tilde{g}\alpha}+e^{\tilde{g}\alpha}, & B=e^{-\tilde{g}(\alpha+l)}+e^{-2{\tilde{g}}L}e^{\tilde{g}(\alpha+l)},\\
C=-e^{-\tilde{g}\alpha}+e^{\tilde{g}\alpha}, & D= -e^{-\tilde{g}(\alpha+l)}+
e^{-2{\tilde{g}}L}e^{\tilde{g}(\alpha+l)}.
\end{array}
\end{equation}
Moreover, (\ref{RRtanequ1}) becomes
\begin{equation}\label{NNtanequ}
\begin{array}{rl}
\tan\tilde{f}l
&=\dfrac{2\tilde{f}\tilde{g}\left(e^{-\tilde{g}l}-e^{-2\tilde{g}L}e^{\tilde{g}l}
\right)}{\tilde{f}^2(e^{-\tilde{g}\alpha}+e^{\tilde{g}\alpha})\left(e^{-\tilde{g}(\alpha+l)}+e^{-2\tilde{g}L}e^{\tilde{g}(\alpha+l)}\right)
+\tilde{g}^2(-e^{-\tilde{g}\alpha}+e^{\tilde{g}\alpha})\left(-e^{-\tilde{g}(\alpha+l)}+
e^{-2\tilde{g}L}e^{\tilde{g}(\alpha+l)}\right)},
\end{array}
\end{equation}
and (\ref{lambdaalphagl0}) becomes
\begin{equation}\label{NNlambdaalpha}
{\lambda_1}_\alpha=\frac{{\tilde{f}}{\tilde{g}}^2({\tilde{f}}^2
+{\tilde{g}}^2)T_1
T_3}{-(\frac{{\tilde{f}}}{2{\tilde{g}}}
+\frac{{\tilde{g}}}{2{\tilde{f}}})T_1T_4+ \frac{{\tilde{f}}({\tilde{f}}^2+{\tilde{g}}^2)}{2} T_1\alpha T_3
-2{\tilde{f}} (A^2{\tilde{f}}^2+C^2{\tilde{g}}^2)(L-l)e^{-2{\tilde{g}}L} -\frac{ l(T_2^2+{\tilde{f}}^2{\tilde{g}}^2 T_1^2)}{2{\tilde{f}}}}.
\end{equation}

We first determine the sign of $T_3$.
Note that $$T_3=BC+AD=2e^{-{\tilde{g}}(2L-2\alpha-l)}-2e^{-{\tilde{g}}(2\alpha+l)}.
$$ Then we have  $T_3<0$ if $\alpha\in [0,(L-l)/2)$, $T_3=0$ if $\alpha=(L-l)/2$, and $T_3>0$ if $\alpha\in((L-l)/2,L-l)$. By (\ref{parametersign1}) and (\ref{parametersign2}), this implies that
${\lambda_1}_\alpha>0$ when $\alpha\in [0,(L-l)/2)$, ${\lambda_1}_\alpha=0$ when $\alpha=(L-l)/2$.
To obtain the sign of ${\lambda_1}_\alpha$ when $\alpha>(L-l)/2$, we
need the following observation:
 \begin{equation}\label{lambda0lLlequal}
\lambda_1(\alpha,l)=\lambda_1(L-\alpha-l,l).
\end{equation}
This relation can be derived by using the variable change $y=L-x$ in (\ref{model1dgelineig}). Therefore, if ${\lambda_1}_\alpha>0$ when
$\alpha\in [0,(L-l)/2)$, then ${\lambda_1}_\alpha<0$ when
$\alpha\in ((L-l)/2,L-l]$.
We conclude the above results as follows.
\begin{Lemma}\label{monolambdaalphaNN}
{\rm
When Neumann boundary conditions are applied at $x=0,L$, $\lambda_1(\alpha,l)$
increases in $\alpha$ when $\alpha\in[0,(L-l)/2)$ and decreases in
$\alpha$ when $\alpha\in ((L-l)/2,L-l]$.
}
\end{Lemma}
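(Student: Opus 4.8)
The plan is to combine two ingredients that are already available in the excerpt: the explicit formula \eqref{NNlambdaalpha} for ${\lambda_1}_\alpha$ together with the sign information collected in \eqref{parametersign1}--\eqref{parametersign2}, and the reflection symmetry \eqref{lambda0lLlequal}, namely $\lambda_1(\alpha,l)=\lambda_1(L-\alpha-l,l)$. First I would pin down the sign of the numerator of \eqref{NNlambdaalpha}. Since Case 1 falls entirely under (H1) (Remark 3.5 guarantees $g'(0)+\lambda_1<0$ when both ends are Neumann), all the sign facts in \eqref{parametersign2} are in force: $A,B,C>0$, $D<0$, $T_1>0$, $T_4>0$, and of course $\tilde f,\tilde g>0$. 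Hence ${\tilde f}{\tilde g}^2({\tilde f}^2+{\tilde g}^2)T_1>0$, and the sign of the numerator of ${\lambda_1}_\alpha$ is exactly the sign of $T_3=BC+AD$. Plugging the Neumann expressions \eqref{NNABCDT1T2def} into $T_3$ and simplifying gives the clean formula $T_3 = 2e^{-{\tilde g}(2L-2\alpha-l)} - 2e^{-{\tilde g}(2\alpha+l)}$, which is negative precisely when $2L-2\alpha-l > 2\alpha+l$, i.e. $\alpha < (L-l)/2$, zero when $\alpha=(L-l)/2$, and positive when $\alpha\in((L-l)/2,\,L-l)$.

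Next I would argue that the denominator $E_{{\lambda_1}_\alpha}$ in \eqref{NNlambdaalpha} is nonzero and of constant sign on the relevant range, so that no division-by-zero or sign-flip pathology occurs; since $\lambda_1(\alpha,l)$ is known to be a smooth function of $\alpha$ (stated just before Lemma 3.1, citing \cite{Hessbook}), the derivative ${\lambda_1}_\alpha$ is continuous, and the formula \eqref{NNlambdaalpha} forces $E_{{\lambda_1}_\alpha}\neq 0$ wherever the numerator is nonzero; combined with continuity and the vanishing of the numerator only at $\alpha=(L-l)/2$, this yields that ${\lambda_1}_\alpha$ has the same sign as $T_3$ throughout, i.e. ${\lambda_1}_\alpha>0$ for $\alpha\in[0,(L-l)/2)$ and ${\lambda_1}_\alpha=0$ at $\alpha=(L-l)/2$. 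This already gives monotone increase on $[0,(L-l)/2)$.

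For the decreasing part on $((L-l)/2,\,L-l]$, rather than re-examining $E_{{\lambda_1}_\alpha}$ I would invoke the symmetry \eqref{lambda0lLlequal}. The map $\alpha\mapsto L-\alpha-l$ is an order-reversing involution of $[0,L-l]$ fixing the midpoint $(L-l)/2$; since $\lambda_1(\alpha,l)=\lambda_1(L-\alpha-l,l)$, the function $\alpha\mapsto\lambda_1(\alpha,l)$ is symmetric about $(L-l)/2$. Increasing behaviour on $[0,(L-l)/2)$ therefore transfers to decreasing behaviour on $((L-l)/2,L-l]$, which is the claim. (Concretely, differentiating the identity gives ${\lambda_1}_\alpha(\alpha)=-{\lambda_1}_\alpha(L-\alpha-l)$, so positivity on the left half is equivalent to negativity on the right half.)

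I expect the one genuinely delicate point to be controlling the denominator $E_{{\lambda_1}_\alpha}$: establishing that it never vanishes (equivalently, that ${\lambda_1}_\alpha$ does not blow up) on $[0,L-l]$. The cleanest route, and the one I would take, is to avoid analysing $E_{{\lambda_1}_\alpha}$ directly and instead lean on the a priori smoothness of $\lambda_1(\alpha,l)$: because ${\lambda_1}_\alpha$ is a continuous real-valued function and equals the ratio in \eqref{NNlambdaalpha}, the denominator cannot vanish at any point where the numerator is nonzero, and at the single point $\alpha=(L-l)/2$ where the numerator vanishes we already get ${\lambda_1}_\alpha=0$ from the symmetry argument. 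Thus the sign of ${\lambda_1}_\alpha$ is governed entirely by the sign of $T_3$, and the lemma follows.
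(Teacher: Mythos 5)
Your overall strategy is the same as the paper's: read off the sign of the numerator of \eqref{NNlambdaalpha} from $T_3=2e^{-\tilde{g}(2L-2\alpha-l)}-2e^{-\tilde{g}(2\alpha+l)}$, and transfer the conclusion to the right half-interval via the reflection identity \eqref{lambda0lLlequal}. However, there is a genuine gap at the decisive step: you never determine the \emph{sign} of the denominator $E_{{\lambda_1}_\alpha}$. Your smoothness argument only shows that the denominator is nonvanishing (hence of constant sign) wherever the numerator is nonzero; it cannot tell you whether that constant sign is $+$ or $-$, and the symmetry \eqref{lambda0lLlequal} only says that the two halves have opposite monotonicity, not which one increases. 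As written, your conclusion is in fact internally inconsistent: you assert that ${\lambda_1}_\alpha$ ``has the same sign as $T_3$'' and simultaneously that ${\lambda_1}_\alpha>0$ on $[0,(L-l)/2)$, where you have just shown $T_3<0$.

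The missing ingredient, which is what the paper's appeal to \eqref{parametersign1}--\eqref{parametersign2} is encoding, is that $E_{{\lambda_1}_\alpha}<0$ for $\alpha\in[0,(L-l)/2]$: the first term $-\bigl(\tfrac{\tilde f}{2\tilde g}+\tfrac{\tilde g}{2\tilde f}\bigr)T_1T_4$ is strictly negative because $T_1>0$ and $T_4>0$ under (H1); the second term $\tfrac{\tilde f(\tilde f^2+\tilde g^2)}{2}T_1\alpha T_3$ is $\le 0$ there precisely because $T_3\le 0$; and the remaining two terms $-2\tilde f(A^2\tilde f^2+C^2\tilde g^2)(L-l)e^{-2\tilde g L}$ and $-\tfrac{l(T_2^2+\tilde f^2\tilde g^2T_1^2)}{2\tilde f}$ are clearly $\le 0$, the last strictly $<0$ since $l>0$. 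Hence on $[0,(L-l)/2]$ the derivative ${\lambda_1}_\alpha$ has the sign \emph{opposite} to $T_3$, i.e. ${\lambda_1}_\alpha>0$ on $[0,(L-l)/2)$ and $=0$ at the midpoint, and the symmetry then gives ${\lambda_1}_\alpha<0$ on $((L-l)/2,L-l]$. With this one computation added your proof closes; without it, the argument cannot distinguish ``increasing then decreasing'' from ``decreasing then increasing''.
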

It is obvious that $\lambda_1(\alpha,l)$ is continuous on $\alpha$.
For $\alpha=0$, \eqref{NNtanequ} becomes
\begin{equation}\label{NNequ-0}
\tan
\tilde{f}l|_{\alpha=0}=\tan\sqrt{f'(0)+\lambda_1(0,l)}l=\dfrac{\tilde{g}\left(e^{\tilde{g}(L-l)}-e^{\tilde{g}(l-L)}
\right)}{\tilde{f}\left(e^{\tilde{g}(L-l)}+e^{\tilde{g}(l-L)}
\right)}>0.
\end{equation}
Thus  $\lambda_1(0,l)$ satisfies
$0<\sqrt{f'(0)+\lambda_1(0,l)}l<\pi/2$, which implies $\lambda_1(0,l)<0$ if $f'(0)\geq \pi^2/(4l^2)$.
By (\ref{lambda0lLlequal}), we also have $\lambda_1(L-l,l)<0$ if $f'(0)\geq \pi^2/(4l^2)$.
For $\alpha=(L-l)/2$,
\begin{equation}\label{NNequ-Ll2n}
\tan \tilde{f}l|_{\alpha=\frac{L-l}{2}}=
\frac{2\tilde{f}\tilde{g}(-e^{-\tilde{g}(2L-l)}+e^{-\tilde{g}l})}
{(e^{-\tilde{g}(2L-l)}+e^{-\tilde{g}l})(\tilde{f}^2-\tilde{g}^2)+2e^{-\tilde{g}L}(\tilde{f}^2+\tilde{g}^2)}|_{\alpha=\frac{L-l}{2}}.
\end{equation}
Note that $\tilde{f}^2-\tilde{g}^2=f'(0)+g'(0)+2\lambda_1(\alpha,l)$. If $\lambda_1(\frac{L-l}{2},l)>0$, then $[\tilde{f}^2-\tilde{g}^2]|_{\alpha=\frac{L-l}{2}}>0$ as $f'(0)\geq -g'(0)$ and $(\tan \tilde{f}l)_{\alpha=\frac{L-l}{2}}>0$ since the right-hand side of (\ref{NNequ-Ll2n}) is positive, and hence $\tilde{f}l=\sqrt{f'(0)+\lambda_1(\frac{L-l}{2},l)}l\in (0,\pi/2)$. This cannot happen if $f'(0)\geq\pi^2/(4l^2)$.
Therefore, if $f'(0)\geq\pi^2/(4l^2)$, we always have $\lambda_1(\frac{L-l}{2},l)<0$.

By Lemma \ref{monolambdaalphaNN}, we have the following result.
\begin{Theorem}\label{persistenceNN}
{\rm
When Neumann boundary conditions are applied at $x=0,L$, the following statements hold.
\begin{enumerate}
  \item[(i)] If
   $f'(0)\geq\pi^2/(4l^2)$, then
      the population will persist no matter where the protection zone is;
      \item[(ii)]  If  $f'(0)<\pi^2/(4l^2)$ and $\lambda((L-l)/2,l)<0$,  then
                    the population will persist no matter where the protection zone is;

\item[(iii)]  If  $f'(0)<\pi^2/(4l^2)$ and
 $\lambda((L-l)/2,l)>0$,
  then
  \begin{enumerate}
    \item if $\lambda_1(0,l)> 0$, then
         the population will be extinct no matter where the protection zone is;
        \item if $\lambda_1(0,l)<0$, then the population will persist if  the protection zone is designed on $[0,l]$ or $[L-l,l]$.
  \end{enumerate}
  \end{enumerate}
  }
\end{Theorem}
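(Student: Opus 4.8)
The plan is to convert every assertion into a statement about the sign of the principal eigenvalue $\lambda_1(\alpha,l)$ — this is legitimate by Theorem~\ref{persistenceupt} together with the persistence/extinction convention adopted above — and then to control that sign uniformly over the admissible interval $\alpha\in[0,L-l]$ by exploiting the shape of $\alpha\mapsto\lambda_1(\alpha,l)$ given by Lemma~\ref{monolambdaalphaNN}.

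First I would record the structural consequence of Lemma~\ref{monolambdaalphaNN}. Since $\lambda_1(\cdot,l)$ is continuous on $[0,L-l]$, increasing on $[0,(L-l)/2)$ and decreasing on $((L-l)/2,L-l]$, it attains its maximum over $\alpha\in[0,L-l]$ exactly at $\alpha=(L-l)/2$, and its minimum at the two endpoints $\alpha=0$ and $\alpha=L-l$; by the reflection identity \eqref{lambda0lLlequal} these endpoint values coincide, $\lambda_1(0,l)=\lambda_1(L-l,l)$. Hence for every admissible $\alpha$ one has the sandwich $\lambda_1(0,l)\le\lambda_1(\alpha,l)\le\lambda_1((L-l)/2,l)$.

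The case analysis is then a matter of deciding which end of this interval has a definite sign. For (i) and (ii) it suffices that the top value $\lambda_1((L-l)/2,l)$ be negative, for then $\lambda_1(\alpha,l)<0$ for all $\alpha$, i.e.\ the population persists wherever the zone is placed; in (i) the negativity of $\lambda_1((L-l)/2,l)$ is precisely the conclusion already extracted from \eqref{NNequ-Ll2n} under the hypothesis $f'(0)\ge\pi^2/(4l^2)$ (using $f'(0)\ge-g'(0)$ to sign $\tilde f^2-\tilde g^2$ at $\alpha=(L-l)/2$ and the positivity of the right-hand side of \eqref{NNequ-Ll2n} to confine $\tilde f l$ to $(0,\pi/2)$), while in (ii) it is the hypothesis itself. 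For (iii) we are in the regime $\lambda_1((L-l)/2,l)>0$; in subcase (a) the bottom value is also positive, $\lambda_1(0,l)>0$, so the sandwich yields $\lambda_1(\alpha,l)>0$ for all $\alpha$ and extinction everywhere; in subcase (b) we only invoke $\lambda_1(0,l)<0$ together with $\lambda_1(L-l,l)=\lambda_1(0,l)$, which puts the two configurations $[0,l]$ (i.e.\ $\alpha=0$) and $[L-l,L]$ (i.e.\ $\alpha=L-l$) inside the persistence set $\{\alpha:\lambda_1(\alpha,l)<0\}$.

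I do not anticipate a genuine obstacle here: the monotonicity in Lemma~\ref{monolambdaalphaNN}, the reflection symmetry \eqref{lambda0lLlequal}, and the boundary evaluations at $\alpha=(L-l)/2$ via \eqref{NNequ-Ll2n} and at $\alpha=0$ via \eqref{NNequ-0} are already in hand, so the argument is essentially bookkeeping with the sandwich above. The only point needing explicit care is to note that the infimum of $\lambda_1(\cdot,l)$ is realized at \emph{both} endpoints, so that the single inequality on $\lambda_1(0,l)$ (respectively $\lambda_1(L-l,l)$) genuinely governs the entire boundary behaviour described in (iii).
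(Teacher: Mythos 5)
Your proposal is correct and follows essentially the same route as the paper: the paper's proof likewise reduces each statement to the sign of $\lambda_1(\alpha,l)$ via Theorem \ref{persistenceupt}, using the unimodal shape from Lemma \ref{monolambdaalphaNN} (maximum at $\alpha=(L-l)/2$, minimum at the endpoints, which coincide by \eqref{lambda0lLlequal}) together with the sign evaluations at $\alpha=0$ from \eqref{NNequ-0} and at $\alpha=(L-l)/2$ from \eqref{NNequ-Ll2n}. The only difference is presentational: you make the sandwich inequality $\lambda_1(0,l)\le\lambda_1(\alpha,l)\le\lambda_1((L-l)/2,l)$ explicit, which the paper leaves implicit.
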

\begin{proof}
Note that in (i) and (ii), $\lambda_1(\alpha,l)<0$ for all $\alpha\in[0,L-l]$ and that in (iii)(a), $\lambda_1(\alpha,l)>0$ for all $\alpha\in[0,L-l]$. The results follow from Theorem \ref{persistenceupt}.
\end{proof}

 Furthermore, notice the following facts.
\begin{itemize}
  \item If $f'(0)> \pi^2/(4L^2)$, then there exists $\bar{l}_1\in (0,L)$ such that $f'(0)\geq\pi^2/(4l^2)$ for $l\in (\bar{l}_1,L)$. If $\lambda_1(0,\bar{l}_1)<0$, then
                 there exists $\bar{l}_2\in (0,\bar{l}_1)$ such that $\lambda_1(0,l)
          <0$ for all $l\in (\bar{l}_2,\bar{l}_1)$ and
$\lambda_1(0,l)
>0$  for all $l\in (0,\bar{l}_2)$, since $\lambda_1(0,0)=-g'(0)>0$ and $\lambda_1(0,l)$ decreases in $l$.
  \item If $f'(0)\leq \pi^2/(4L^2)$, then $f'(0)<\pi^2/(4l^2)$ for all $l\in (0,L)$. Since $\lambda_1(0,L)=-f'(0)<0$ and $\lambda_1(0,l)$ decreases in $l$,  there exists $\tilde{l}\in (0,L)$ such that $\lambda_1(0,l)
<0$ for all $l\in (\tilde{l},L)$ and
$\lambda_1(0,l)
>0$  for all $l\in (0,\tilde{l})$.
 \end{itemize}
We then have the following observations for setting up a protection zone for population persistence.
\begin{Theorem}\label{finremarkNN}
{\rm
When Neumann boundary conditions are applied at $x=0,L$,  the following statements hold.
\begin{enumerate}
  \item[(i)] If $f'(0)> \pi^2/(4L^2)$, then
          \begin{enumerate}
         \item for $l\in (\bar{l}_1,L)$,
                   a protection zone
                   can ensure population persistence;

         \item if $\lambda_1(0,\bar{l}_1)<0$, then

\begin{enumerate}
  \item[(b-1)] for   $l\in (\bar{l}_2,\bar{l}_1)$,  a protection zone on $[0,l]$ or $[L-l,L]$ can ensure population persistence;

         \item[(b-2)]  for   $l\in (0,\bar{l}_2)$,
                  the population will be extinct;
\end{enumerate}
         \item if $\lambda_1(0,\bar{l}_1)\geq0$, then for  all $l\in (0,\bar{l}_1)$,
                           the population will be extinct.
       \end{enumerate}

  \item[(ii)]
If $f'(0)\leq \pi^2/(4L^2)$, then
\begin{enumerate}
  \item for  all $l\in (\tilde{l},L)$,  a protection zone on $[0,l]$ or $[L-l,L]$ can ensure population persistence;

         \item for  all $l\in (0,\tilde{l})$,
              the population will be extinct.
\end{enumerate}
 \end{enumerate}
 }
\end{Theorem}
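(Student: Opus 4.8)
The plan is to translate the quantitative information already accumulated in Theorems \ref{persistenceNN} and the two bullet points preceding Theorem \ref{finremarkNN} into a statement organized by the length $l$ of the protection zone, so the proof is essentially a bookkeeping exercise that repackages facts rather than producing new analysis. First I would recall the two regimes for $f'(0)$ relative to $\pi^2/(4L^2)$ and use the fact, established just before the theorem, that $\lambda_1(0,l)$ is continuous and strictly decreasing in $l$ with endpoint values $\lambda_1(0,0)=-g'(0)>0$ and $\lambda_1(0,L)=-f'(0)<0$. This monotonicity guarantees the existence and uniqueness of the threshold lengths $\bar l_1$ (the solution of $f'(0)=\pi^2/(4l^2)$, i.e. $\bar l_1=\pi/(2\sqrt{f'(0)})$), $\bar l_2$ and $\tilde l$ used in the statement, which I would pin down explicitly at the start so that all subsequent case distinctions are unambiguous.

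For part (i), assume $f'(0)>\pi^2/(4L^2)$, so $\bar l_1<L$. For $l\in(\bar l_1,L)$ we have $f'(0)\geq\pi^2/(4l^2)$, so Theorem \ref{persistenceNN}(i) applies verbatim and the population persists for every placement $\alpha$; this gives (i)(a). For the subcase $\lambda_1(0,\bar l_1)<0$, monotonicity of $l\mapsto\lambda_1(0,l)$ together with $\lambda_1(0,0)=-g'(0)>0$ produces the unique $\bar l_2\in(0,\bar l_1)$ with $\lambda_1(0,l)<0$ on $(\bar l_2,\bar l_1)$ and $\lambda_1(0,l)>0$ on $(0,\bar l_2)$. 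On $(\bar l_2,\bar l_1)$ we are in the regime $f'(0)<\pi^2/(4l^2)$ with $\lambda_1(0,l)<0$, so by Lemma \ref{monolambdaalphaNN} the minimum of $\alpha\mapsto\lambda_1(\alpha,l)$ over $[0,L-l]$ is attained at the endpoints $\alpha=0$ and $\alpha=L-l$ and equals $\lambda_1(0,l)<0$; hence Theorem \ref{persistenceupt}(i) gives persistence precisely when the zone is placed at $[0,l]$ or $[L-l,L]$ — this is (i)(b-1). On $(0,\bar l_2)$ we have $\lambda_1(0,l)>0$, and by Lemma \ref{monolambdaalphaNN} and \eqref{lambda0lLlequal} the maximum of $\lambda_1(\alpha,l)$ is at $\alpha=(L-l)/2$ while the minimum is at the endpoints, where it is still positive; thus $\lambda_1(\alpha,l)>0$ for all $\alpha$ and Theorem \ref{persistenceupt}(ii) forces extinction, giving (i)(b-2). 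Finally, if $\lambda_1(0,\bar l_1)\geq0$, monotonicity in $l$ gives $\lambda_1(0,l)\geq\lambda_1(0,\bar l_1)\geq 0$ — in fact $>0$ for $l<\bar l_1$ by strict monotonicity — so the endpoint minimum is nonnegative and, arguing as above, $\lambda_1(\alpha,l)>0$ for all $\alpha$, yielding extinction for all $l\in(0,\bar l_1)$, which is (i)(c).

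For part (ii), assume $f'(0)\leq\pi^2/(4L^2)$, so $f'(0)<\pi^2/(4l^2)$ for every $l\in(0,L)$ and the "large zone" criterion of Theorem \ref{persistenceNN}(i) is never available. Since $\lambda_1(0,L)=-f'(0)<0$, $\lambda_1(0,0)=-g'(0)>0$, and $l\mapsto\lambda_1(0,l)$ is continuous and strictly decreasing, there is a unique $\tilde l\in(0,L)$ with $\lambda_1(0,l)<0$ on $(\tilde l,L)$ and $\lambda_1(0,l)>0$ on $(0,\tilde l)$. For $l\in(\tilde l,L)$, Lemma \ref{monolambdaalphaNN} localizes the minimum of $\lambda_1(\cdot,l)$ at the endpoints with value $\lambda_1(0,l)<0$, so placing the zone at $[0,l]$ or $[L-l,L]$ gives persistence by Theorem \ref{persistenceupt}(i), while for interior $\alpha$ we cannot assert persistence — this is (ii)(a). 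For $l\in(0,\tilde l)$ the endpoint value $\lambda_1(0,l)>0$ is the minimum over $\alpha$, so $\lambda_1(\alpha,l)>0$ everywhere and Theorem \ref{persistenceupt}(ii) gives extinction regardless of placement, which is (ii)(b).

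The only genuinely delicate point — and the place I would be most careful — is the bookkeeping of which endpoint threshold governs which $l$-interval, in particular making sure the strict versus non-strict inequalities at $l=\bar l_1$, $l=\bar l_2$, $l=\tilde l$ are handled consistently and that the conclusions "persistence for $[0,l]$ or $[L-l,L]$" versus "extinction for all $\alpha$" correctly reflect whether $\lambda_1(0,l)$, which by Lemma \ref{monolambdaalphaNN} is the \emph{minimum} of $\alpha\mapsto\lambda_1(\alpha,l)$, is negative or positive; the symmetry relation \eqref{lambda0lLlequal} is what guarantees that $\alpha=0$ and $\alpha=L-l$ give the same (minimal) eigenvalue, so no other placement can do better. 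Everything else is a direct invocation of Lemma \ref{lambdadeplth}, Lemma \ref{monolambdaalphaNN}, and Theorem \ref{persistenceupt}, so I would keep the written proof short, essentially citing these and the two bullet observations, and merely verify the threshold comparisons case by case.
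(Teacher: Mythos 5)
Your proposal is correct and takes essentially the same route as the paper, which proves this theorem by combining the monotonicity of $\lambda_1(0,l)$ in $l$ (Lemma \ref{lambdadeplth}), the endpoint values $\lambda_1(0,0)=-g'(0)>0$ and $\lambda_1(0,L)=-f'(0)<0$, the fact from Lemma \ref{monolambdaalphaNN} that $\alpha\mapsto\lambda_1(\alpha,l)$ attains its minimum at $\alpha=0$ and $\alpha=L-l$, and Theorems \ref{persistenceNN} and \ref{persistenceupt}. The only caveat is your phrase ``persistence precisely when the zone is placed at $[0,l]$ or $[L-l,L]$'' in (i)(b-1): the argument only establishes that these endpoint placements suffice (interior placements may also work if $\lambda_1((L-l)/2,l)<0$), but since the theorem claims only sufficiency this does not affect correctness.
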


 {\bf Case 2. Dirichlet boundary conditions at $x=0,\,L$}. In this case, $\lambda_0(L)=\pi^2/L^2$ and $\lambda_1(\alpha,l)+g'(0)<\lambda_0(L)=\pi^2/L^2$, so $\lambda_1(\alpha,l)+g'(0)$ could be $<0$, $=0$ or $>0$.
 In this case, (\ref{lambda0lLlequal}) remains true.
We also analyze $\lambda_1(\alpha,l)$ according to the sign of $\lambda_1(\alpha,l)+g'(0)$.

 If $\lambda_1(\alpha,l)+g'(0)<0$, then
\begin{equation}\label{DDABCDT1T2def}
\begin{array}{ll}
R_1=-1,& R_2=-e^{-2{\tilde{g}}L},\quad  \hat{R_1}=0,\quad   \hat{R_2}=0,\\
A=-e^{-\tilde{g}\alpha}+e^{\tilde{g}\alpha}, & B=e^{-\tilde{g}(\alpha+l)}-e^{-2{\tilde{g}}L}e^{\tilde{g}(\alpha+l)},\\
C=e^{-\tilde{g}\alpha}+e^{\tilde{g}\alpha}, & D= -e^{-\tilde{g}(\alpha+l)}-
e^{-2{\tilde{g}}L}e^{\tilde{g}(\alpha+l)}.
\end{array}
\end{equation}
Moreover, (\ref{RRtanequ1}) becomes
\begin{equation}\label{DDtanequ}
\begin{array}{l}
\tan
\tilde{f}l=\dfrac{2\tilde{f}\tilde{g}\left(e^{-\tilde{g}l}-e^{-2\tilde{g}L}e^{\tilde{g}l}
\right)}{-\tilde{g}^2(e^{-\tilde{g}\alpha}+e^{\tilde{g}\alpha})\left(e^{-\tilde{g}(\alpha+l)}+e^{-2\tilde{g}L}e^{\tilde{g}(\alpha+l)}\right)
-\tilde{f}^2(-e^{-\tilde{g}\alpha}+e^{\tilde{g}\alpha})\left(-e^{-\tilde{g}(\alpha+l)}+
e^{-2\tilde{g}L}e^{\tilde{g}(\alpha+l)}\right)},
\end{array}
\end{equation}
and (\ref{lambdaalphagl0}) becomes
\begin{equation}\label{DDlambdaalpha}
{\lambda_1}_\alpha=\frac{{\tilde{f}}{\tilde{g}}^2({\tilde{f}}^2
+{\tilde{g}}^2)T_1
T_3}{-(\frac{{\tilde{f}}}{2{\tilde{g}}}
+\frac{{\tilde{g}}}{2{\tilde{f}}})T_1T_4+ \frac{{\tilde{f}}({\tilde{f}}^2+{\tilde{g}}^2)}{2} T_1\alpha T_3
+2{\tilde{f}} (A^2{\tilde{f}}^2+C^2{\tilde{g}}^2)(L-l)e^{-2{\tilde{g}}L} -\frac{ l(T_2^2+{\tilde{f}}^2{\tilde{g}}^2 T_1^2)}{2{\tilde{f}}}}.
\end{equation}
Note that
$$
T_3=BC+AD=2e^{-g(2\alpha+l)}-2e^{-g(2L-2\alpha-l)}.
$$
Then we have  $T_3>0$ if $\alpha\in [0,(L-l)/2)$, $T_3=0$ if $\alpha=(L-l)/2$, and $T_3<0$ if $\alpha\in((L-l)/2,L-l)$.
By (\ref{parametersign1}) and (\ref{parametersign2}), this implies that ${\lambda_1}_\alpha=0$ when $\alpha=(L-l)/2$.
 By (\ref{DDlambdaalpha}) and the continuity of ${\lambda_1}_\alpha$ in $\alpha$ and $l$, we see that in the extreme case where $\alpha=0$ and
$l$ is sufficiently close to $L$, ${\lambda_1}_\alpha<0$. Since ${\lambda_1}_\alpha$ does not change
sign when $\alpha<(L-l)/2$, we obtain that ${\lambda_1}_\alpha<0$ for
$\alpha\in [0,(L-l)/2)$. By (\ref{lambda0lLlequal}), we then have ${\lambda_1}_\alpha>0$ for
$\alpha\in ((L-l)/2,L-l]$.

 If $\lambda_1(\alpha,l)+g'(0)=0$, then by (\ref{DDlambdaalphag0}), ${\lambda_1}_\alpha=0$ only when $\alpha=(L-l)/2$, ${\lambda_1}_\alpha<0$ when $\alpha=0$, and ${\lambda_1}_\alpha>0$ when $\alpha=L-l$. Since ${\lambda_1}_\alpha$ is continuous in $\alpha$, we know that ${\lambda_1}_\alpha<0$ for $\alpha\in (0,(L-l)/2)$ and ${\lambda_1}_\alpha>0$ for $\alpha\in ((L-l)/2,L-l)$.

If $\lambda_1(\alpha,l)+g'(0)>0$, then
   since $0<-g'(0)<\lambda_1(\alpha,l)<\lambda_0(L)-g'(0)=\pi^2/L^2-g'(0)$ and  $\tilde{g}^2=\lambda_1(\alpha,l)+g'(0)<\pi^2/L^2<\pi^2/(L-l)^2$, we have  $0<\tilde{g}(L-l)<\pi$. Since $0<\alpha<L-l$, we also have  $0<\tilde{g}(L-\alpha-l)<\pi$ and $\pi>\tilde{g}(L-\alpha-l)>\tilde{g}(L-2\alpha-l)>\tilde{g}(L-2(L-l)-l)=\tilde{g}(-L+l)>-\pi$. In particular, $\tilde{g}(L-2\alpha-l)\in (0,\pi)$ if $\alpha\in (0,(L-l)/2)$ and $\tilde{g}(L-2\alpha-l)\in (-\pi,0)$ if $\alpha\in ((L-l)/2,L-l)$. We also note that $ {\tilde{f}}{\tilde{g}}^4-{\tilde{f}}^3{\tilde{g}}^2=\tilde{f}\tilde{g}^2(\tilde{g}^2-\tilde{f}^2)
 =\tilde{f}\tilde{g}^2(\lambda_1(\alpha,l)+g'(0)-\lambda_1(\alpha,l)-f'(0))
=\tilde{f}\tilde{g}^2(g'(0)-f'(0))
 <0$. When $\alpha=0$ and $l$ is sufficiently close to $L$, it follows from (\ref{DDlambdaalphagg0}) that ${\lambda_1}_\alpha<0$, so ${\lambda_1}_\alpha<0$ for $\alpha\in[0,(L-l)/2)$. By (\ref{lambda0lLlequal}), we then have  ${\lambda_1}_\alpha>0$ for $\alpha\in((L-l)/2,L-l]$.

Based upon the above analysis, we have the following result.
\begin{Lemma}\label{monolambdaalphaDD}
{\rm
When Dirichlet boundary conditions are applied at $x=0,L$,  $\lambda_1(\alpha,l)$
decreases in $\alpha\in[0,(L-l)/2)$ and then increases in $\alpha\in((L-l)/2,L-l]$.
}
\end{Lemma}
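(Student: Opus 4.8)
The plan is to assemble the monotonicity statement from the three sign analyses already carried out in the body preceding the lemma, one for each of the cases $\lambda_1(\alpha,l)+g'(0)<0$, $=0$, and $>0$. In each of these cases it has already been shown that $\lambda_{1,\alpha}=0$ precisely at $\alpha=(L-l)/2$, and that $\lambda_{1,\alpha}<0$ for $\alpha\in[0,(L-l)/2)$ and $\lambda_{1,\alpha}>0$ for $\alpha\in((L-l)/2,L-l]$; the first ingredient is therefore just to observe that these three cases are exhaustive (recall $\lambda_1(\alpha,l)+g'(0)<\lambda_0(L)=\pi^2/L^2$, so all three subcases are a priori possible, and for any fixed $(\alpha,l)$ exactly one of them holds). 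First I would note that $\lambda_1(\cdot,l)$ is $C^1$ in $\alpha$ on $[0,L-l]$ (stated via \cite{Hessbook} at the start of Section 3), so the sign of its derivative is what governs monotonicity.

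The second ingredient is the reflection identity \eqref{lambda0lLlequal}, $\lambda_1(\alpha,l)=\lambda_1(L-\alpha-l,l)$, obtained from the change of variables $y=L-x$ in \eqref{model1dgelineig}; this is what pins down that the unique critical point sits exactly at the midpoint $\alpha=(L-l)/2$ and forces the derivative to be antisymmetric about it, so a decreasing branch on the left is mirrored by an increasing branch on the right. Together with the explicit sign computations, this already gives strict monotonicity on each open subinterval on which a single one of (H1)–(H3) holds.

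The step I expect to require the most care — and the one I would write out carefully — is patching across the thresholds where $\lambda_1(\alpha,l)+g'(0)$ transitions through $0$ as $\alpha$ varies: a priori the set $\{\alpha:\lambda_1(\alpha,l)+g'(0)<0\}$ and its complements could interleave, so one must check that the "derivative is negative" conclusion on the left of the midpoint is consistent across these interfaces. Here I would argue as follows: on $[0,(L-l)/2)$ the three case-analyses each yield $\lambda_{1,\alpha}<0$ with no internal zero, and $\lambda_{1,\alpha}$ is continuous in $\alpha$; hence wherever one case hands off to another the one-sided limits of $\lambda_{1,\alpha}$ agree and remain strictly negative, so $\lambda_{1,\alpha}<0$ throughout $[0,(L-l)/2)$ regardless of how the subregions are arranged. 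The same continuity argument, or alternatively a direct appeal to \eqref{lambda0lLlequal}, gives $\lambda_{1,\alpha}>0$ on $((L-l)/2,L-l]$. Finally, combining with continuity of $\lambda_1$ at the midpoint yields that $\lambda_1(\cdot,l)$ strictly decreases on $[0,(L-l)/2)$ and strictly increases on $((L-l)/2,L-l]$, attaining its minimum at $\alpha=(L-l)/2$, which is the assertion.
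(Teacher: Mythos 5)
Your proposal is correct and follows essentially the same route as the paper: there the lemma is stated as a summary ("Based upon the above analysis\ldots") of the immediately preceding three-case sign analysis of ${\lambda_1}_\alpha$ via the formulas of Lemmas \ref{lambdaalphataneqnsgl0}, \ref{lambdaalphataneqnsgeq0} and \ref{lambdaalphataneqnsgg0} (sign of the numerator through $T_3$, resp.\ the explicit numerators in the (H2) and (H3) formulas), anchored at the extreme configuration $\alpha=0$, $l$ near $L$, and reflected through \eqref{lambda0lLlequal}. Your explicit patching of the sign of ${\lambda_1}_\alpha$ across the interfaces where $\lambda_1(\alpha,l)+g'(0)$ changes sign is a point the paper passes over silently, and it is a worthwhile clarification rather than a departure from its argument.
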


Moreover,
if $ \lambda_1(\alpha,l)+g'(0)<0$,
for $\alpha=0$, \eqref{DDtanequ} becomes
\begin{equation}\label{DDequ-0}
\tan
\tilde{f}l|_{\alpha=0}=\tan\sqrt{f'(0)+\lambda_1(0,l)}l=-\dfrac{\tilde{f}\left(e^{\tilde{g}(L-l)}-e^{\tilde{g}(l-L)}
\right)}{\tilde{g}\left(e^{\tilde{g}(L-l)}+e^{\tilde{g}(l-L)}
\right)}<0.
\end{equation}
Thus  $\lambda_1(0,l)$ satisfies
$\pi/2<\sqrt{f'(0)+\lambda_1(0,l)}l<\pi$ or $\pi^2/(4l^2)-f'(0)<\lambda_1(0,l)<\pi^2/l^2-f'(0)$. This  implies $\lambda_1(0,l)<0$ if $f'(0)>\pi^2/l^2$ and $\lambda_1(0,l)>0$ if $f'(0)<\pi^2/(4l^2)$.
By (\ref{lambda0lLlequal}), $\lambda_1(L-l,l)<0$ if $f'(0)>\pi^2/l^2$ and $\lambda_1(L-l,l)>0$ if $f'(0)<\pi^2/(4l^2)$.
If $ \lambda_1(\alpha,l)+g'(0)\geq0$,
we have $\lambda_1(\alpha,l)\geq -g'(0)>0$. Note that $\lambda_1(\alpha,l)$ can be solved from (\ref{DDtanequg0}) or (\ref{DDtanequgg0}), which implies that $f'(0)<\pi^2/l^2$. We then obtain population persistence and extinction based on the information in the protection zone.

\begin{Theorem}\label{persistenceDD}
{\rm
When Dirichlet boundary conditions are applied at $x=0,L$, the following statements hold.
 \begin{enumerate}
   \item[(i)] If $f'(0)\geq \pi^2/l^2$, then
       the population will persist no matter where the protection zone is.
      \item[(ii)] If $f'(0)< \pi^2/l^2$ and $\lambda_1((L-l)/2,l)\geq 0$, then
                  the population will be extinct.
           \item[(iii)] If  $f'(0)< \pi^2/l^2$ and  $\lambda_1((L-l)/2,l)<0$, then the population will persist if the protection zone starts somewhere near $(L-l)/2$ and the optimal protection zone should be set up at  $[(L-l)/2,(L+l)/2]$.
 \end{enumerate}
 }
 \end{Theorem}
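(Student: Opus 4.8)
The plan is to reduce all three cases to the persistence/extinction dichotomy of Theorem~\ref{persistenceupt} by pinning down the sign of the principal eigenvalue $\lambda_1(\alpha,l)$ as $\alpha$ ranges over $[0,L-l]$, using the shape information already established. Lemma~\ref{monolambdaalphaDD} tells us that $\alpha\mapsto\lambda_1(\alpha,l)$ strictly decreases on $[0,(L-l)/2]$ and strictly increases on $[(L-l)/2,L-l]$, so its minimum is attained uniquely at $\alpha=(L-l)/2$ (protection zone $[(L-l)/2,(L+l)/2]$) and its maximum at the two endpoints $\alpha=0,\,L-l$, which carry the same value by the reflection identity \eqref{lambda0lLlequal}. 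Thus everything will follow once I control $\lambda_1(0,l)$ and $\lambda_1((L-l)/2,l)$.

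For part (i) I would argue as follows. Assuming $f'(0)\geq\pi^2/l^2$, the discussion preceding the theorem shows that $\lambda_1(0,l)+g'(0)\geq 0$ is impossible (it would force $f'(0)<\pi^2/l^2$), so (H1) holds at $\alpha=0$ and the transcendental relation \eqref{DDequ-0} is available. Its right-hand side is strictly negative, which places $\tilde{f}l$ in the branch $(\pi/2,\pi)$ --- the choice of branch being dictated by positivity of the principal eigenfunction, whose sinusoidal arc on $[0,l]$ cannot span more than a half-period --- and therefore $(f'(0)+\lambda_1(0,l))l^2<\pi^2$, i.e. $\lambda_1(0,l)<\pi^2/l^2-f'(0)\leq 0$. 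Since $\lambda_1(0,l)=\max_{\alpha\in[0,L-l]}\lambda_1(\alpha,l)$, this gives $\lambda_1(\alpha,l)<0$ for every admissible $\alpha$, and Theorem~\ref{persistenceupt}(i) yields uniform persistence for every placement of the zone (in agreement with Proposition~\ref{persistence}).

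For parts (ii) and (iii), both under $f'(0)<\pi^2/l^2$, I would invoke only the monotone profile. If $\lambda_1((L-l)/2,l)\geq 0$, then $\lambda_1(\alpha,l)\geq\lambda_1((L-l)/2,l)\geq 0$ for every $\alpha$, strictly positive away from $\alpha=(L-l)/2$, so Theorem~\ref{persistenceupt}(ii) drives small-data solutions to $0$ wherever the zone sits; this is (ii). If instead $\lambda_1((L-l)/2,l)<0$, then continuity of $\alpha\mapsto\lambda_1(\alpha,l)$ produces an open interval about $(L-l)/2$ on which $\lambda_1<0$, giving persistence there via Theorem~\ref{persistenceupt}(i); and since $\lambda_1(\cdot,l)$ is minimal exactly at $\alpha=(L-l)/2$, the persistence condition is met most robustly there, identifying $[(L-l)/2,(L+l)/2]$ as the optimal zone. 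That is (iii).

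The genuine content is all upstream --- Lemma~\ref{monolambdaalphaDD} and the reflection identity \eqref{lambda0lLlequal} --- so the remaining assembly is elementary monotonicity and continuity bookkeeping. The one step I expect to require care is the borderline $f'(0)=\pi^2/l^2$ in part (i): the strict-inequality fact quoted from the preceding discussion was stated only for $f'(0)>\pi^2/l^2$, so at equality one must extract $\lambda_1(0,l)<0$ directly from \eqref{DDequ-0}, which is why I singled out the branch argument above. A secondary point is purely conventional: in (ii) ``extinction'' should be read in the paper's sense ($\lambda_1(\alpha,l)\geq 0$ for all $\alpha$, with strict positivity off the single optimal location), and Theorem~\ref{persistenceupt}(ii) only guarantees decay from sufficiently small initial data.
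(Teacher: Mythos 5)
Your proof is correct and follows essentially the same route the paper intends: Lemma~\ref{monolambdaalphaDD} plus the reflection identity \eqref{lambda0lLlequal} locate the minimum of $\lambda_1(\cdot,l)$ at $\alpha=(L-l)/2$ and the maximum at $\alpha=0,L-l$, the sign at $\alpha=0$ is read off from \eqref{DDequ-0} (with (H2)/(H3) excluded when $f'(0)\ge\pi^2/l^2$), and Theorem~\ref{persistenceupt} converts the eigenvalue signs into persistence/extinction. You in fact treat the borderline cases ($f'(0)=\pi^2/l^2$ in (i), and $\lambda_1((L-l)/2,l)=0$ in (ii)) more carefully than the paper's own discussion does.
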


Furthermore, notice the following facts.
\begin{itemize}
  \item If $f'(0)>\pi^2/L^2$, then there exists $\bar{l}_1>0$ such that $f'(0)>\pi^2/l^2$ for all $l\in (\bar{l}_1,L)$. If $\lambda_1((L-\bar{l}_1)/2,\bar{l}_1)<0$, then there exists $\bar{l}_2\in (0,\bar{l}_1)$ such that $\lambda_1((L-l)/2,l)<0$ for all $l\in (\bar{l}_2,\bar{l}_1]$ and $\lambda_1((L-l)/2,l)>0$ for all $l\in (0,\bar{l}_2)$.
  \item If $ f'(0)\leq \pi^2/L^2$, then by (\ref{lambdacomparison}) and  Table \ref{lambda0Ltable}, we have
$$0\leq \frac{\pi^2}{L^2}-f'(0)=\lambda_0(L)-f'(0)=\lambda_{L,f}<\lambda_1(\alpha,l).
$$
That is, $\lambda_1(\alpha,l)>0$ is always true.
  \end{itemize}
We then have the following observations regarding the setup of the protection zone.
\begin{Theorem}\label{finremarkDD}
{\rm When Dirichlet boundary conditions are applied at $x=0,L$, the following statements hold.
\begin{enumerate}
  \item[(i)] If $f'(0)>\pi^2/L^2$, then
         \begin{enumerate}
         \item for $l\in (\bar{l}_1,L)$,
                     the population will persist no matter where the protection zone is;
         \item if $\lambda_1((L-\bar{l}_1)/2,\bar{l}_1)<0$, then
                 \begin{enumerate}
           \item[(b-1)]  for $l\in (\bar{l}_2,\bar{l}_1]$, a protection zone on $[(L-l)/2,(L+l)/2]$ can ensure population persistence;
           \item[(b-2)]  for $l\in (0,\bar{l}_2)$,
                        the population will be extinct;
         \end{enumerate}
          \item if $\lambda_1((L-\bar{l}_1)/2,\bar{l}_1)>0$, then
                         the population will be extinct.
                                      \end{enumerate}

  \item[(ii)] If $ f'(0)\leq \pi^2/L^2$, then
 the population will be extinct.
 \end{enumerate}
}
 \end{Theorem}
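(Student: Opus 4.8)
The plan is to read Theorem~\ref{finremarkDD} off from the persistence/extinction dichotomy of Theorem~\ref{persistenceDD} together with the $\alpha$-monotonicity recorded in Lemma~\ref{monolambdaalphaDD}, the only genuine work being the verification of the two bullet-point facts stated just above the theorem. I would first dispose of part~(ii): if $f'(0)\le\pi^2/L^2=\lambda_0(L)$, then $\lambda_{L,f}=\lambda_0(L)-f'(0)\ge0$, so Lemma~\ref{lambdafg1} (equivalently \eqref{lambdacomparison}) gives $\lambda_1(\alpha,l)>\lambda_{L,f}\ge0$ for every admissible pair $(\alpha,l)$, and Theorem~\ref{persistenceupt}(ii) then forces extinction (in the paper's sense) no matter how the protection zone is placed or sized.

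For part~(i), since $l\mapsto\pi^2/l^2$ is strictly decreasing on $(0,L]$, blows up as $l\to0^+$, and equals $\pi^2/L^2<f'(0)$ at $l=L$, there is a unique $\bar{l}_1=\pi/\sqrt{f'(0)}\in(0,L)$ with $f'(0)=\pi^2/\bar{l}_1^2$, and $f'(0)\ge\pi^2/l^2$ exactly for $l\in[\bar{l}_1,L]$; statement~(a) is then immediate from Theorem~\ref{persistenceDD}(i). The fact I must then establish is that $l\mapsto\lambda_1\bigl((L-l)/2,l\bigr)$ is strictly decreasing on $(0,L)$ and tends to $\pi^2/L^2-g'(0)>0$ as $l\to0^+$. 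For the monotonicity: when $l_1<l_2$ the centred protection interval $[(L-l_1)/2,(L+l_1)/2]$ is contained in $[(L-l_2)/2,(L+l_2)/2]$, so the corresponding potentials $H_{l_1},H_{l_2}$ from \eqref{lineargrowthfun} satisfy $H_{l_2}\le H_{l_1}$ on $[0,L]$, strictly on a set of positive measure (where $H$ drops from $-g'(0)$ to $-f'(0)<-g'(0)$), and the strict monotonicity of the principal eigenvalue in the potential (\cite[Lemma~15.5]{Hessbook}, cf.\ Lemma~\ref{lambdadeplth}) yields $\lambda_1\bigl((L-l_2)/2,l_2\bigr)<\lambda_1\bigl((L-l_1)/2,l_1\bigr)$; for the limit, continuity of $\lambda_1$ and the fact that at $l=0$ the eigenvalue problem degenerates to $-\varphi''-g'(0)\varphi=\lambda\varphi$ with Dirichlet data---whose principal eigenvalue is $\pi^2/L^2-g'(0)$, positive by {\bf (A1)}---give the claim.

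Granting this, the two bullet facts follow at once: in case~(b), $\lambda_1\bigl((L-\bar{l}_1)/2,\bar{l}_1\bigr)<0$ together with strict decrease and the intermediate value theorem produce a unique $\bar{l}_2\in(0,\bar{l}_1)$ with $\lambda_1\bigl((L-l)/2,l\bigr)>0$ on $(0,\bar{l}_2)$ and $\lambda_1\bigl((L-l)/2,l\bigr)<0$ on $(\bar{l}_2,\bar{l}_1]$; in case~(c), $\lambda_1\bigl((L-\bar{l}_1)/2,\bar{l}_1\bigr)>0$ and strict decrease force $\lambda_1\bigl((L-l)/2,l\bigr)>0$ on all of $(0,\bar{l}_1)$. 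I then assemble the conclusions, recalling from Lemma~\ref{monolambdaalphaDD} that $\min_{\alpha\in[0,L-l]}\lambda_1(\alpha,l)=\lambda_1((L-l)/2,l)$. For $l\in(\bar{l}_2,\bar{l}_1)$ in case~(b) we have $\lambda_1((L-l)/2,l)<0$ and $f'(0)<\pi^2/l^2$, so Theorem~\ref{persistenceDD}(iii) yields persistence when the zone is placed at $[(L-l)/2,(L+l)/2]$, and at $l=\bar{l}_1$ Theorem~\ref{persistenceDD}(i) yields persistence for any placement; this is~(b-1). For $l\in(0,\bar{l}_2)$ in case~(b), and for every $l\in(0,\bar{l}_1)$ in case~(c), $\lambda_1((L-l)/2,l)>0$ and $f'(0)<\pi^2/l^2$, so Theorem~\ref{persistenceDD}(ii) gives extinction for every placement; this is~(b-2) and~(c).

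The main---and essentially the only---obstacle is the nested-domain monotonicity of $l\mapsto\lambda_1((L-l)/2,l)$ together with its limiting value as $l\to0^+$; everything after that is bookkeeping over the cases, powered by Theorems~\ref{persistenceupt} and~\ref{persistenceDD} and Lemma~\ref{monolambdaalphaDD}.
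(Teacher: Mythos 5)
Your proposal is correct and follows essentially the same route as the paper: part (ii) via Lemma \ref{lambdafg1} and \eqref{lambdacomparison}, and part (i) by combining Theorem \ref{persistenceDD} with Lemma \ref{monolambdaalphaDD} and the two bullet-point facts the paper records just before the theorem. The one place you go beyond the paper's sketch is in justifying the strict decrease of $l\mapsto\lambda_1\bigl((L-l)/2,l\bigr)$ and its limit $\pi^2/L^2-g'(0)>0$ as $l\to0^+$ --- a detail the paper leaves implicit, and which does require your nested-domain comparison of the potentials rather than a direct appeal to Lemma \ref{lambdadeplth}, since that lemma fixes $\alpha$ while here $\alpha=(L-l)/2$ varies with $l$.
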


{\bf  Case 3. Neumann boundary condition at $x=0$ and Dirichlet boundary condition at $x=L$}. In this case, $\lambda_1(\alpha,l)+g'(0)<\lambda_0(L)=\pi^2/(4L^2)$, so $\lambda_1(\alpha,l)+g'(0)$ could be $<0$, $=0$ or $>0$.

If $ \lambda_1(\alpha,l)+g'(0)<0$,
then
\begin{equation}\label{NDABCDT1T2def}
\begin{array}{ll}
R_1=1,& R_2=-e^{-2{\tilde{g}}L},\quad \hat{R_1}=0,\quad  \hat{R_2}=0,\\
A=e^{-\tilde{g}\alpha}+e^{\tilde{g}\alpha}, & B=e^{-\tilde{g}(\alpha+l)}-e^{-2{\tilde{g}}L}e^{\tilde{g}(\alpha+l)},\\
C=-e^{-\tilde{g}\alpha}+e^{\tilde{g}\alpha}, & D= -e^{-\tilde{g}(\alpha+l)}-
e^{-2{\tilde{g}}L}e^{\tilde{g}(\alpha+l)}.
\end{array}
\end{equation}
Moreover, (\ref{RRtanequ1}) becomes
\begin{equation}\label{NDtanequ}
\begin{array}{rl}
\tan\tilde{f}l &=\dfrac{2\tilde{f}\tilde{g}\left(e^{-\tilde{g}l}+e^{-2\tilde{g}L}e^{\tilde{g}l}
\right)}{\tilde{f}^2(e^{-\tilde{g}\alpha}+e^{\tilde{g}\alpha})\left(e^{-\tilde{g}(\alpha+l)}-e^{-2\tilde{g}L}e^{\tilde{g}(\alpha+l)}\right)
+\tilde{g}^2(-e^{-\tilde{g}\alpha}+e^{\tilde{g}\alpha})\left(-e^{-\tilde{g}(\alpha+l)}-
e^{-2\tilde{g}L}e^{\tilde{g}(\alpha+l)}\right)},
\end{array}
\end{equation}
and (\ref{lambdaalphagl0}) becomes
\begin{equation}\label{NDlambdaalpha}
{\lambda_1}_\alpha=\frac{{\tilde{f}}{\tilde{g}}^2({\tilde{f}}^2
+{\tilde{g}}^2)T_1
T_3}{-(\frac{{\tilde{f}}}{2{\tilde{g}}}
+\frac{{\tilde{g}}}{2{\tilde{f}}})T_1 T_4+ \frac{{\tilde{f}}({\tilde{f}}^2+{\tilde{g}}^2)}{2} T_1\alpha T_3
+2{\tilde{f}} (A^2{\tilde{f}}^2+C^2{\tilde{g}}^2)(L-l)e^{-2{\tilde{g}}L} -\frac{ l(T_2^2+{\tilde{f}}^2{\tilde{g}}^2 T_1^2)}{2{\tilde{f}}}}.
\end{equation}
It follows from
 $$T_3=BC+AD=-2e^{-g(2\alpha+l)}-2e^{-g(2L-2\alpha-l)}<0$$
 that ${\lambda_1}_\alpha$ cannot be zero. Since
${\lambda_1}_\alpha$ is continuous, it does not change sign when
$\alpha\in[0,L-l]$. By (\ref{NDlambdaalpha}) and the continuity of ${\lambda_1}_\alpha$ on $\alpha$ and $l$, we see that when $\alpha=0$, in the extreme case where $l$ is sufficiently close to $L$,
${\lambda_1}_\alpha>0$.
Therefore,  ${\lambda_1}_\alpha$ is always greater than $0$.
That is, ${\lambda_1}$ increases in $\alpha\in[0,L-l]$.

If $ \lambda_1(\alpha,l)+g'(0)=0$, by (\ref{NDtanequg0}) and (\ref{NDlambdaalphag0}), $\tan\tilde{f}l>0$ and  ${\lambda_1}_\alpha>0$.

If $0<\lambda_1(\alpha,l)+g'(0)<\lambda_0(L)=\pi^2/(4L^2)$, then $\lambda_1(\alpha,l)<\pi^2/(4L^2)-g'(0)=\pi^2/(4L^2)-g'(0)$, which implies $\tilde{g}^2=\lambda_1(\alpha,l)+g'(0)<\pi^2/(4L^2)<\pi^2/(4(L-l)^2)$ and hence, $0<\tilde{g}(L-l)<\pi/2$. Since $0<\alpha<L-l$, we also have  $0<\tilde{g}(L-\alpha-l)<\pi/2$. Hence, by (\ref{NDtanequgg0}), we have  $\tan\tilde{f}l>0$. Furthermore, $\pi/2>\tilde{g}(L-\alpha-l)>\tilde{g}(L-2\alpha-l)>\tilde{g}(L-2(L-l)-l)=\tilde{g}(-L+l)>-\pi/2$, and ${\tilde{f}}^3{\tilde{g}}^2-{\tilde{f}}{\tilde{g}}^4
>0$. By (\ref{NDlambdaalphagg0}), we know that ${\lambda_1}_\alpha$ cannot be $0$. When $\alpha=0$ and $l=L$, (\ref{NDlambdaalphagg0}) implies ${\lambda_1}_\alpha>0$. Therefore, ${\lambda_1}_\alpha>0$ for all $\alpha\in [0,L-l]$.

Then we have the following result regarding the monotonicity of $\lambda_1(\alpha,l)$ in $\alpha$.
\begin{Lemma}\label{monolambdaalphaND}
{\rm
When Neumann boundary condition is applied at $x=0$ and Dirichlet boundary condition is applied at $x=L$, $\lambda_1(\alpha,l)$
increases in $\alpha\in[0,L-l]$.
}
\end{Lemma}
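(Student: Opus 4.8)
The plan is to establish $\partial_\alpha\lambda_1(\alpha,l)>0$ for every $\alpha\in[0,L-l]$, which immediately yields the asserted strict monotonicity in $\alpha$. I would split the analysis into the three mutually exclusive regimes governed by the sign of $g'(0)+\lambda_1(\alpha,l)$: (H1) $g'(0)+\lambda_1<0$, (H2) $g'(0)+\lambda_1=0$, (H3) $g'(0)+\lambda_1>0$, and in each case invoke the explicit derivative formula already available --- \eqref{lambdaalphagl0} together with the Neumann--Dirichlet specialization \eqref{NDABCDT1T2def}--\eqref{NDlambdaalpha} in case (H1), and Lemmas \ref{lambdaalphataneqnsgeq0}(ii) and \ref{lambdaalphataneqnsgg0}(ii), i.e. \eqref{NDtanequg0}--\eqref{NDlambdaalphag0} and \eqref{NDtanequgg0}--\eqref{NDlambdaalphagg0}, in cases (H2) and (H3). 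A preliminary remark is that, for this pair of boundary conditions, Table \ref{lambda0Ltable} gives $\lambda_0(L)=\pi^2/(4L^2)$, so by \eqref{lambdacomparison} one has $\tilde g^2=g'(0)+\lambda_1<\pi^2/(4L^2)$ whenever (H3) occurs; this single a priori bound produces all the sign information needed there.

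In case (H1), substituting $R_1=1$, $R_2=-e^{-2\tilde g L}$, $\hat R_1=\hat R_2=0$ into \eqref{lambdaalphagl0} gives \eqref{NDlambdaalpha}, and a short computation yields $T_3=BC+AD=-2e^{-\tilde g(2\alpha+l)}-2e^{-\tilde g(2L-2\alpha-l)}<0$. Since the numerator of $\partial_\alpha\lambda_1$ is $\tilde f\tilde g^2(\tilde f^2+\tilde g^2)T_1T_3$, and the sign relations \eqref{parametersign1}--\eqref{parametersign2} give $\tilde f,\tilde g>0$ and $T_1>0$, this numerator never vanishes; as $\partial_\alpha\lambda_1$ is finite and continuous in $(\alpha,l)$, it is therefore nowhere zero, hence of one sign on the connected set $[0,L-l]$. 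Evaluating \eqref{NDlambdaalpha} in the limiting configuration $\alpha=0$ with $l$ close to $L$ shows that sign is positive, so $\partial_\alpha\lambda_1>0$ throughout (H1). Case (H2) is immediate: the right-hand side of \eqref{NDlambdaalphag0} is manifestly positive.

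In case (H3), I would use $\tilde g^2<\pi^2/(4L^2)$ to get $0<\tilde g(L-l)<\pi/2$, which together with $0<\alpha<L-l$ gives $0<\tilde g(L-\alpha-l)<\pi/2$ and $\tilde g(L-2\alpha-l)\in(-\pi/2,\pi/2)$; hence $\cos\tilde g(L-l)>0$ and $\cos\tilde g(L-2\alpha-l)>0$, while $\tilde f^3\tilde g^2-\tilde f\tilde g^4=\tilde f\tilde g^2\bigl(f'(0)-g'(0)\bigr)>0$ since $f'(0)>0>g'(0)$. Consequently the numerator of \eqref{NDlambdaalphagg0} never vanishes, so $\partial_\alpha\lambda_1$ again keeps a constant sign on $[0,L-l]$ by continuity, and the value of \eqref{NDlambdaalphagg0} at $\alpha=0$, $l=L$ pins this sign to be positive. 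Combining the three regimes gives $\partial_\alpha\lambda_1(\alpha,l)>0$ on all of $[0,L-l]$, and the conclusion follows.

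The main obstacle is that the denominators $E_{{\lambda_1}_\alpha}$ and $E_{nd}$ appearing in the explicit derivative formulas are long and not transparently sign-definite, so a direct sign computation is unattractive. The device that makes the argument go through is to sidestep the denominator entirely: one needs only that the \emph{numerator} of $\partial_\alpha\lambda_1$ cannot vanish, after which continuity forces a constant sign on the interval $[0,L-l]$, which is then read off from a single convenient limit of $(\alpha,l)$. The delicate point is verifying the nonvanishing of the numerator in case (H3), and it is precisely there that the bound $\tilde g(L-l)<\pi/2$ --- equivalently the Neumann/Dirichlet value $\lambda_0(L)=\pi^2/(4L^2)$ --- is indispensable; by contrast the Dirichlet--Dirichlet problem only gives $\tilde g(L-l)<\pi$ and genuinely relies on the reflection symmetry \eqref{lambda0lLlequal}, which does not hold for the present mixed boundary conditions.
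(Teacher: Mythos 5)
Your argument is correct and follows essentially the same route as the paper: split according to the sign of $g'(0)+\lambda_1(\alpha,l)$, observe in case (H1) that $T_3=-2e^{-\tilde g(2\alpha+l)}-2e^{-\tilde g(2L-2\alpha-l)}<0$ so the numerator of \eqref{NDlambdaalpha} never vanishes, read the sign off positivity of \eqref{NDlambdaalphag0} in case (H2), and in case (H3) use $\tilde g^2<\lambda_0(L)=\pi^2/(4L^2)$ to force the cosines in \eqref{NDlambdaalphagg0} positive, concluding each time by continuity of ${\lambda_1}_\alpha$ plus evaluation at a limiting configuration. The only cosmetic difference is that you make explicit the identity $\tilde f^3\tilde g^2-\tilde f\tilde g^4=\tilde f\tilde g^2(f'(0)-g'(0))$, which the paper leaves implicit.
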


If $ \lambda_1(\alpha,l)+g'(0)<0$, when $\alpha=0$, \eqref{NDtanequ} becomes
\begin{equation}\label{NDequ-0}
\tan
\tilde{f}l|_{\alpha=0}=\tan\sqrt{f'(0)+\lambda_1(0,l)}l=-\dfrac{\tilde{g}\left(e^{\tilde{g}(L-l)}+e^{\tilde{g}(l-L)}
\right)}{\tilde{f}\left(-e^{\tilde{g}(L-l)}+e^{\tilde{g}(l-L)}
\right)}>0.
\end{equation}
Thus,
if $f'(0)\geq \pi^2/(4l^2)$, then $\lambda_1(0,l)<0$.
 When $\alpha=L-l$,
\begin{equation}\label{NDequ-Ll}
\tan
\tilde{f}l|_{\alpha=L-l}=\dfrac{\tilde{f}(e^{-\tilde{g}(L-l)}+e^{\tilde{g}(L-l)})e^{-\tilde{g}L}}{\tilde{g}(e^{-\tilde{g}(2L-l)}-e^{-\tilde{g}l})}<0. \end{equation}
Hence, $\lambda_1(L-l,l)$ satisfies $\pi/2<\sqrt{f'(0)+\lambda_1(L-l,l)}l<\pi$, or $\pi^2/(4l^2)-f'(0)<\lambda_1(L-l,l)<\pi^2/l^2-f'(0)$ . This  implies $\lambda_1(L-l,l)<0$ if $f'(0)\geq \pi^2/l^2$ and $\lambda_1(L-l,l)>0$ if $f'(0)<\pi^2/(4l^2)$.

 If $ \lambda_1(\alpha,l)+g'(0)\geq 0$, since $\tan\tilde{f}l >0$ and $\lambda_1(\alpha,l)>0$, we have $f'(0)<\pi^2/(4l^2)$.
By applying Lemma \ref{monolambdaalphaND}, we obtain the following results.
\begin{Theorem}\label{persistenceND}
{\rm When Neumann boundary condition is applied at $x=0$, and Dirichlet boundary condition is applied at $x=L$, the following statements hold.
 \begin{enumerate}
   \item[(i)] If $f'(0)\geq \pi^2/l^2$, then
         the population will persist no matter where the protection zone is.
   \item[(ii)]  If $\pi^2/(4l^2)<f'(0)<\pi^2/l^2$, then
       the population will persist if the protection zone starts from somewhere near $0$.
   \item[(iii)]  If $f'(0)\leq\pi^2/(4l^2)$, then
    the population will be extinct if the protection zone starts from somewhere near $L-l$.
 \end{enumerate}
 }
 \end{Theorem}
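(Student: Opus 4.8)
The plan is to obtain all three statements directly from Theorem~\ref{persistenceupt}, once the sign of $\lambda_1(\alpha,l)$ has been located using the monotonicity in Lemma~\ref{monolambdaalphaND} together with the endpoint identities \eqref{NDequ-0} at $\alpha=0$ and \eqref{NDequ-Ll} at $\alpha=L-l$, and the case analysis on the sign of $\lambda_1(\alpha,l)+g'(0)$ carried out just above. Since $\lambda_1(\cdot,l)$ is continuous and nondecreasing on $[0,L-l]$, its values there fill the interval $[\lambda_1(0,l),\lambda_1(L-l,l)]$, so in each item it suffices to pin down the signs of these two boundary values and then appeal to Theorem~\ref{persistenceupt}.

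For part (i), I would first observe that $f'(0)\ge\pi^2/l^2>\pi^2/(4l^2)$ excludes the regimes (H2) and (H3), because being in either of those forces $f'(0)<\pi^2/(4l^2)$ (as noted above in the discussion of $\lambda_1(\alpha,l)+g'(0)\ge 0$). Hence we are in regime (H1), \eqref{NDequ-Ll} applies, and it yields $\lambda_1(L-l,l)<0$. By Lemma~\ref{monolambdaalphaND}, $\lambda_1(\alpha,l)\le\lambda_1(L-l,l)<0$ for every $\alpha\in[0,L-l]$, so Theorem~\ref{persistenceupt}(i) gives uniform persistence regardless of the placement of the protection zone.

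For part (ii), the assumption $\pi^2/(4l^2)<f'(0)<\pi^2/l^2$ again puts us in regime (H1), so \eqref{NDequ-0} holds and, since $f'(0)\ge\pi^2/(4l^2)$, gives $\lambda_1(0,l)<0$; continuity of $\lambda_1(\cdot,l)$ then furnishes $\alpha^\ast\in(0,L-l]$ with $\lambda_1(\alpha,l)<0$ on $[0,\alpha^\ast)$, and Theorem~\ref{persistenceupt}(i) gives persistence whenever the protection zone starts in $[0,\alpha^\ast)$. For part (iii), when $f'(0)\le\pi^2/(4l^2)$ I would check $\lambda_1(L-l,l)>0$ by splitting on the sign of $\lambda_1(L-l,l)+g'(0)$: if it is negative, \eqref{NDequ-Ll} gives $\lambda_1(L-l,l)>0$, while if it is nonnegative then $\lambda_1(L-l,l)\ge-g'(0)>0$. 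Continuity of $\lambda_1(\cdot,l)$ then yields $\widetilde{\alpha}\in[0,L-l)$ with $\lambda_1(\alpha,l)>0$ on $(\widetilde{\alpha},L-l]$, so Theorem~\ref{persistenceupt}(ii) gives extinction of sufficiently small initial data whenever the protection zone starts in $(\widetilde{\alpha},L-l]$.

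Since the hard analytic work is already contained in Lemmas~\ref{lambdaalphataneqnsgl0}, \ref{lambdaalphataneqnsgeq0}, \ref{lambdaalphataneqnsgg0} and \ref{monolambdaalphaND}, the present proof is essentially bookkeeping; the one point that needs care — and which I expect to be the main obstacle — is the case distinction according to the sign of $\lambda_1(\alpha,l)+g'(0)$, since each of the three regimes (H1)--(H3) is governed by a different transcendental equation and only one of them is compatible with the hypothesis on $f'(0)$ near the relevant endpoint. Once that compatibility is verified in each item, the conclusion follows at once from monotonicity, continuity, and Theorem~\ref{persistenceupt}.
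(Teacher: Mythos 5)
Your proposal is correct and follows essentially the same route as the paper: the paper's own proof is a two-line appeal to Theorem \ref{persistenceupt} after the endpoint sign determinations ($\lambda_1(0,l)<0$ when $f'(0)\geq\pi^2/(4l^2)$ via \eqref{NDequ-0}, $\lambda_1(L-l,l)<0$ or $>0$ via \eqref{NDequ-Ll}, and the observation that $\lambda_1+g'(0)\geq 0$ forces $f'(0)<\pi^2/(4l^2)$) together with the monotonicity in Lemma \ref{monolambdaalphaND}, all of which is exactly the bookkeeping you carry out. Your explicit verification that the hypothesis on $f'(0)$ places the relevant endpoint in regime (H1) before invoking \eqref{NDequ-0} or \eqref{NDequ-Ll} is a point the paper leaves implicit, but it is the same argument.
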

 \begin{proof}
Note that in (i)  $\lambda_1(\alpha,l)<0$ for all $\alpha\in[0,L-l]$, in (ii) $\lambda_1(0,l)<0$,
 and that in (iii), $\lambda_1(L-l,l)>0$.
   The results follow from Theorem \ref{persistenceupt}.
\end{proof}

Notice the following facts.
\begin{itemize}
  \item When $f'(0)>\pi^2/L^2$, there exists $\bar{l}_1\in (0,L)$ and $\bar{l}_2\in (0,\bar{l}_1)$ such that $f'(0)\geq\pi^2/l^2$ for all $l\in [\bar{l}_1,L)$, $\pi^2/(4l^2)< f'(0)<\pi^2/l^2$ for all $l\in (\bar{l}_2,\bar{l}_1)$, and $ f'(0)<\pi^2/(4l^2)$ for all $l\in (0,\bar{l}_2)$. If $ \lambda_1(0,\bar{l}_2)<0$, then there exists $\bar{l}_3\in (0,\bar{l}_2)$ such that $ \lambda_1(0,l)<0$ for all $l\in(\bar{l}_3,\bar{l}_2]$ and $ \lambda_1(0,l)>0$ for all $l\in (0,\bar{l}_3)$.
  \item When $\pi^2/(4L^2)<f'(0)\leq\pi^2/L^2$, there exists $\bar{l}_2\in (0,L)$ such that $\pi^2/(4l^2)<f'(0)\leq\pi^2/l^2$ for all $l\in (\bar{l}_2,L)$ and $ f'(0)<\pi^2/(4l^2)$ for all $l\in (0,\bar{l}_2)$.
  \item When  $f'(0)\leq\pi^2/(4L^2)$,
          then by (\ref{lambdacomparison}) and  Table \ref{lambda0Ltable}, we have
$$0\leq \frac{\pi^2}{4L^2}-f'(0)=\lambda_0(L)-f'(0)=\lambda_{L,f}<\lambda_1(\alpha,l).
$$
\end{itemize}
 We then have the following observations.
 \begin{Theorem}\label{finremarkND}
 {\rm When Neumann boundary condition is applied at $x=0$, and Dirichlet boundary condition is applied at $x=L$, the following results hold.
 \begin{enumerate}
 \item[(i)] If $f'(0)>\pi^2/L^2$, then
       \begin{enumerate}
         \item for  $l\in [\bar{l}_1,L)$,
                   the population will persist no matter where the protection zone is;
             \item for   $l\in (\bar{l}_2,\bar{l}_1)$,
                      the population will persist if the protection zone starts near $x=0$ and
                     the optimal protection zone should be set up at $[0,l]$;
         \item if $ \lambda_1(0,\bar{l}_2)<0$, then
                \begin{enumerate}
         \item[(c-1)]  for $l\in(\bar{l}_3,\bar{l}_2]$, the population will persist if the protection zone is set up at $[0,l]$;
             \item[(c-2)]  for  $l\in(0,\bar{l}_3]$, the population will be extinct;
          \end{enumerate}
        \item if $ \lambda_1(0,\bar{l}_2)\geq 0$, then for   $l\in (0,\bar{l}_2)$,
         the population will  be extinct.
       \end{enumerate}
   \item[(ii)]  If $\pi^2/(4L^2)<f'(0)\leq\pi^2/L^2$, then
       the results in (i)(b)-(i)(d) are valid by replacing $\bar{l}_1$ with $L$ in (i)(b).

   \item[(iii)]   If  $f'(0)\leq\pi^2/(4L^2)$, then
        the population will be extinct no matter where the protection zone is.
  \end{enumerate}
 }
\end{Theorem}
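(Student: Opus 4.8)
The plan is to derive this statement by combining Theorem~\ref{persistenceND} with the monotonicity of $\lambda_1(\alpha,l)$ in $\alpha$ (Lemma~\ref{monolambdaalphaND}) and in $l$ (Lemma~\ref{lambdadeplth}), together with the three itemized facts displayed immediately before the statement. The organizing principle is that, by Theorem~\ref{persistenceupt}, the population persists when $\lambda_1(\alpha,l)<0$ and goes extinct (in the endangered-species sense) when $\lambda_1(\alpha,l)>0$; moreover, since by Lemma~\ref{monolambdaalphaND} the map $\alpha\mapsto\lambda_1(\alpha,l)$ is increasing on $[0,L-l]$, one has $\min_{\alpha\in[0,L-l]}\lambda_1(\alpha,l)=\lambda_1(0,l)$. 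Hence, for a fixed admissible length $l$, a protection zone capable of ensuring persistence exists if and only if $\lambda_1(0,l)<0$, and in that case the optimal placement is $[0,l]$; if $\lambda_1(0,l)\ge 0$, then $\lambda_1(\alpha,l)\ge 0$ for every $\alpha$ and the population is extinct regardless of placement.

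First I would treat part (i), under $f'(0)>\pi^2/L^2$. Using the first displayed fact, split $(0,L)$ into $[\bar{l}_1,L)$, $(\bar{l}_2,\bar{l}_1)$ and $(0,\bar{l}_2)$. For $l\in[\bar{l}_1,L)$ we have $f'(0)\ge\pi^2/l^2$, so Theorem~\ref{persistenceND}(i) gives persistence for every $\alpha$, which is (i)(a). For $l\in(\bar{l}_2,\bar{l}_1)$ we have $\pi^2/(4l^2)<f'(0)<\pi^2/l^2$; as established through \eqref{NDequ-0} (equivalently, $f'(0)\ge\pi^2/(4l^2)$), this forces $\lambda_1(0,l)<0$, so the optimal zone is $[0,l]$ by the monotonicity in $\alpha$, and Theorem~\ref{persistenceND}(ii) yields (i)(b). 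For $l\in(0,\bar{l}_2)$ we have $f'(0)<\pi^2/(4l^2)$, which places us in the regime of Theorem~\ref{persistenceND}(iii), so everything then hinges on the sign of $\lambda_1(0,l)$; since $l\mapsto\lambda_1(0,l)$ is decreasing (Lemma~\ref{lambdadeplth}), there are exactly two possibilities. Either $\lambda_1(0,\bar{l}_2)<0$, in which case the second displayed fact produces $\bar{l}_3$ and statements (i)(c-1)--(i)(c-2) follow; or $\lambda_1(0,\bar{l}_2)\ge 0$, in which case monotonicity in $l$ gives $\lambda_1(0,l)>0$, hence $\lambda_1(\alpha,l)>0$ for all $\alpha$, on all of $(0,\bar{l}_2)$, which is (i)(d).

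For part (ii), under $\pi^2/(4L^2)<f'(0)\le\pi^2/L^2$, the inequality $f'(0)\le\pi^2/L^2<\pi^2/l^2$ holds for every $l\in(0,L)$, so the regime $f'(0)\ge\pi^2/l^2$ of Theorem~\ref{persistenceND}(i) never arises and there is no analogue of $\bar{l}_1$; the third displayed fact supplies $\bar{l}_2$ with $\pi^2/(4l^2)<f'(0)\le\pi^2/l^2$ on $(\bar{l}_2,L)$ and $f'(0)<\pi^2/(4l^2)$ on $(0,\bar{l}_2)$. Applying the argument of part (i) verbatim on these two subintervals---with the role of $(\bar{l}_2,\bar{l}_1)$ now played by $(\bar{l}_2,L)$---gives precisely (i)(b)--(i)(d) with $\bar{l}_1$ replaced by $L$ in (i)(b). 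Finally, for part (iii), under $f'(0)\le\pi^2/(4L^2)=\lambda_0(L)$, Lemma~\ref{lambdafg1} (equivalently \eqref{lambdacomparison}) gives $\lambda_1(\alpha,l)>\lambda_{L,f}=\lambda_0(L)-f'(0)\ge 0$ for every admissible $(\alpha,l)$, whence the population is extinct regardless of the placement and length of the protection zone.

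The only mildly delicate point, and the main thing to be careful about, is the existence and correct ordering of the threshold lengths $\bar{l}_1,\bar{l}_2,\bar{l}_3$; but this is already settled by the three facts preceding the statement, resting solely on the continuity and strict monotonicity of $l\mapsto\lambda_1(0,l)$, on the endpoint values $\lambda_1(0,0)=\lambda_0(L)-g'(0)>0$ and $\lambda_1(0,L)=\lambda_0(L)-f'(0)$, and on the elementary behavior of $l\mapsto\pi^2/l^2$ and $l\mapsto\pi^2/(4l^2)$. Consequently no genuine obstacle remains once Theorem~\ref{persistenceND} and Lemmas~\ref{monolambdaalphaND}, \ref{lambdadeplth} and \ref{lambdafg1} are in hand: the proof is simply a bookkeeping assembly of these ingredients.
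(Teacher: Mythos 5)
Your proposal is correct and follows essentially the same route as the paper: the theorem is a bookkeeping assembly of Theorem \ref{persistenceND}, the monotonicity of $\lambda_1(\alpha,l)$ in $\alpha$ (Lemma \ref{monolambdaalphaND}) and in $l$ (Lemma \ref{lambdadeplth}), the bound \eqref{lambdacomparison} for part (iii), and the three threshold facts stated immediately before the theorem, which is exactly how the paper (implicitly) justifies it. The key observation you make explicit --- that $\alpha=0$ minimizes $\lambda_1(\alpha,l)$, so a persistence-enabling placement exists iff $\lambda_1(0,l)<0$ --- is the same organizing principle the paper uses.
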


 {\bf Case 4. Dirichlet boundary condition at $x=0$ and Neumann boundary condition at $x=L$}. In this case, $\lambda_0(L)=\pi^2/(4L^2)$ and $\lambda_1(\alpha,l)+g'(0)<\lambda_0(L)=\pi^2/(4L^2)$.

Note that by letting $y=L-x$, one can rewrite the eigenvalue problem (\ref{model1dgelineig}) with Dirichlet Boundary condition at $x=0$ and Neumann Boundary condition at $x=L$ into an eigenvalue problem in the same form of (\ref{model1dgelineig}) with Neumann Boundary condtion at $x=0$ and Dirichlet Boundary condition at $x=L$. Thus, by adapting the results in Case 3, we can obtain
${\lambda_1}_\alpha<0$ in this case and in turn the following results hold.

\begin{Lemma}\label{monolambdaalphaDN}
{\rm When Dirichlet boundary condition is applied at $x=0$, and Neumann boundary condition is applied at $x=L$, $\lambda_1(\alpha,l)$
decreases in $\alpha\in[0,L-l]$.
}
\end{Lemma}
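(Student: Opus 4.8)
The plan is to deduce this from Case 3 via the reflection $y = L-x$, exactly as indicated in the paragraph preceding the lemma. First I would let $\varphi_1$ be the principal eigenfunction of \eqref{model1dgelineig} under the Dirichlet condition at $x=0$ and the Neumann condition at $x=L$, and set $\psi(y) := \varphi_1(L-y)$ on $[0,L]$. Since $\psi_{yy}(y) = \varphi_{1,xx}(L-y)$, the identity $-\varphi_{1,xx} - f'(0)\varphi_1 = \lambda_1(\alpha,l)\varphi_1$ on $(\alpha,\alpha+l)$ transforms into $-\psi_{yy} - f'(0)\psi = \lambda_1(\alpha,l)\psi$ on the image interval $(L-\alpha-l, L-\alpha)$, and similarly the $g'(0)$-equation holds on $(0,L-\alpha-l)\cup(L-\alpha,L)$. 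In other words, $\psi$ solves a problem of the form \eqref{model1dgelineig} with the \emph{same} eigenvalue but with the protection zone now being $(\tilde\alpha, \tilde\alpha+\tilde l)$, where $\tilde\alpha = L-\alpha-l$ and $\tilde l = l$; note that $\tilde\alpha$ ranges over $[0,L-l]$ as $\alpha$ does, and $\alpha\mapsto\tilde\alpha$ is an orientation-reversing bijection of $[0,L-l]$ onto itself.

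Next I would check the boundary conditions: $\varphi_1(0)=0$ becomes $\psi(L)=0$ (Dirichlet at $y=L$) and $\varphi_1'(L)=0$ becomes $\psi'(0)=0$ (Neumann at $y=0$), the sign change from the chain rule being irrelevant because the right-hand side vanishes. Hence $\psi$ is, up to a positive multiple, the principal eigenfunction of the Case 3 problem with protection zone parameters $(\tilde\alpha, l)$; since the principal eigenvalue of \eqref{model1dgelineig0} is simple and is the unique eigenvalue admitting a positive eigenfunction (as recalled after \eqref{model1dgelineig0}), the two principal eigenvalues must coincide, giving
\begin{equation*}
\lambda_1^{\mathrm{DN}}(\alpha,l) \;=\; \lambda_1^{\mathrm{ND}}(L-\alpha-l,\,l), \qquad \alpha\in[0,L-l],
\end{equation*}
where the superscripts indicate which pair of boundary conditions is in force.

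Finally, Lemma \ref{monolambdaalphaND} asserts that $\tilde\alpha\mapsto\lambda_1^{\mathrm{ND}}(\tilde\alpha,l)$ is increasing on $[0,L-l]$; composing with the decreasing bijection $\alpha\mapsto L-\alpha-l$ shows that $\alpha\mapsto\lambda_1^{\mathrm{DN}}(\alpha,l)$ is decreasing on $[0,L-l]$, which is the claim. The only delicate point --- and the main, though rather mild, obstacle --- is the bookkeeping in the change of variables: one must verify that $\partial_{xx}$ is preserved under $y=L-x$, that the protection interval and its complement are carried to an interval and its complement so that the roles of $f'(0)$ and $g'(0)$ in $H$ are unchanged, and that the two (distinct) boundary conditions are interchanged in precisely the way that converts Case 4 into Case 3 rather than into some other configuration. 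Once this is checked, uniqueness of the principal eigenvalue makes the identification above rigorous and the monotonicity follows at once.
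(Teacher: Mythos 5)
Your proposal is correct and follows exactly the paper's argument: the reflection $y=L-x$ converts the Dirichlet--Neumann problem into the Neumann--Dirichlet problem of Case 3 with protection zone starting at $L-\alpha-l$, and composing the increasing function from Lemma \ref{monolambdaalphaND} with the decreasing map $\alpha\mapsto L-\alpha-l$ gives the claim. The paper states this in one sentence; you have simply supplied the (correct) bookkeeping details it leaves implicit.
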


\begin{Theorem}\label{persistenceDN}
{\rm When Dirichlet boundary condition is applied at $x=0$, and Neumann boundary condition is applied at $x=L$, the following statements hold.
\begin{enumerate}
  \item[(i)] If $f'(0)\geq\pi^2/l^2$, then
     the population will persist no matter where the protection zone is.
  \item[(ii)] If $\pi^2/(4l^2)<f'(0)<\pi^2/l^2$, then
      the population will persist if the protection zone starts somewhere near $L-l$.
  \item[(iii)] If $f'(0)\leq \pi^2/(4l^2)$, then
    the population will be extinct if the protection zone starts near $0$.
\end{enumerate}
}
 \end{Theorem}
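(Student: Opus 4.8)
The plan is to deduce Theorem~\ref{persistenceDN} from Case~3 by the reflection $y=L-x$, in exactly the way this substitution was already used to obtain Lemma~\ref{monolambdaalphaDN}. Under $y=L-x$ the eigenvalue problem~\eqref{model1dgelineig} with Dirichlet condition at $x=0$ and Neumann condition at $x=L$ is transformed into an eigenvalue problem of the same form with Neumann condition at the left endpoint and Dirichlet condition at the right endpoint, and the protection zone $(\alpha,\alpha+l)$ in the $x$ variable is carried to the interval $(L-\alpha-l,\,L-\alpha)$ in the $y$ variable, i.e.\ a protection zone of the same length $l$ with starting point $L-\alpha-l$. Writing $\lambda_1^{DN}(\alpha,l)$ and $\lambda_1^{ND}(\alpha,l)$ for the principal eigenvalues in Case~4 and Case~3 respectively, this yields
\begin{equation*}
\lambda_1^{DN}(\alpha,l)=\lambda_1^{ND}(L-\alpha-l,\,l),\qquad \alpha\in[0,L-l].
\end{equation*}
In particular $\lambda_1^{DN}(0,l)=\lambda_1^{ND}(L-l,l)$ and $\lambda_1^{DN}(L-l,l)=\lambda_1^{ND}(0,l)$; combined with Lemma~\ref{monolambdaalphaND} (monotone increase of $\lambda_1^{ND}$ in its starting point) this reproduces the decrease of $\lambda_1^{DN}(\cdot,l)$ asserted in Lemma~\ref{monolambdaalphaDN}.

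For (i), when $f'(0)\ge\pi^2/l^2$ Theorem~\ref{persistenceND}(i) gives $\lambda_1^{ND}(\alpha',l)<0$ for every $\alpha'\in[0,L-l]$, hence $\lambda_1^{DN}(\alpha,l)<0$ for every $\alpha\in[0,L-l]$, and Theorem~\ref{persistenceupt}(i) yields uniform persistence irrespective of where the protection zone is. For (ii), when $\pi^2/(4l^2)<f'(0)<\pi^2/l^2$ the computation preceding Theorem~\ref{persistenceND} (via \eqref{NDequ-0}, using $f'(0)>\pi^2/(4l^2)$) gives $\lambda_1^{ND}(0,l)<0$, so $\lambda_1^{DN}(L-l,l)=\lambda_1^{ND}(0,l)<0$; since $\lambda_1^{DN}(\cdot,l)$ is decreasing by Lemma~\ref{monolambdaalphaDN}, it stays negative on a left neighbourhood of $L-l$, and Theorem~\ref{persistenceupt}(i) gives persistence whenever the protection zone starts sufficiently close to $L-l$. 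For (iii), when $f'(0)\le\pi^2/(4l^2)$ the analysis in Case~3 gives $\lambda_1^{ND}(L-l,l)>0$, so $\lambda_1^{DN}(0,l)=\lambda_1^{ND}(L-l,l)>0$; again by monotonicity $\lambda_1^{DN}(\cdot,l)>0$ on a right neighbourhood of $0$, and Theorem~\ref{persistenceupt}(ii) shows that for such starting points the population with sufficiently small initial data dies out.

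The only genuinely delicate point is the bookkeeping in the change of variables: one must check that $y=L-x$ swaps the two boundary operators (sending the Dirichlet condition $\varphi(0)=0$ to the Neumann condition at the new endpoint $y=0$, and the Neumann condition $\varphi'(L)=0$ to the Dirichlet condition at $y=L$), that the piecewise-constant potential $H$ is carried to the analogous potential whose logistic part sits on $(L-\alpha-l,\,L-\alpha)$, and that the principal eigenvalue---characterised by positivity of its eigenfunction, or variationally---is invariant under this isometry of $[0,L]$. Once that is verified, all three assertions are a direct transcription of Theorems~\ref{persistenceND} and~\ref{persistenceupt}; no new estimate is required.
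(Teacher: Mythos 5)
Your proposal is correct and follows exactly the route the paper intends: the reflection $y=L-x$ turns the Dirichlet--Neumann problem into the Neumann--Dirichlet problem of Case 3 with the protection zone carried to starting point $L-\alpha-l$, giving $\lambda_1^{DN}(\alpha,l)=\lambda_1^{ND}(L-\alpha-l,l)$, after which all three parts are read off from the Case 3 sign information at $\alpha'=0$ and $\alpha'=L-l$ together with Lemma \ref{monolambdaalphaDN} and Theorem \ref{persistenceupt}. The paper leaves this transcription implicit ("by adapting the results in Case 3"), and your write-up supplies precisely the bookkeeping it omits.
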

 Furthermore, the following results are valid.
\begin{itemize}
  \item When $f'(0)>\pi^2/L^2$, there exists $\bar{l}_1\in (0,L)$ and $\bar{l}_2\in (0,\bar{l}_1)$ such that $f'(0)\geq\pi^2/l^2$ for all $l\in [\bar{l}_1,L)$, $\pi^2/(4l^2)< f'(0)<\pi^2/l^2$ for all $l\in (\bar{l}_2,\bar{l}_1)$,  and $ f'(0)<\pi^2/(4l^2)$ for all $l\in (0,\bar{l}_2)$. If $ \lambda_1(L-\bar{l}_2,\bar{l}_2)<0$, then there exists $\bar{l}_3\in (0,\bar{l}_2)$ such that $ \lambda_1(L-l,l)<0$ for all $l\in(\bar{l}_3,\bar{l}_2]$ and $ \lambda_1(L-l,l)>0$ for all $l\in (0,\bar{l}_3)$.
  \item When $\pi^2/(4L^2)<f'(0)\leq\pi^2/L^2$, there exists $\bar{l}_2\in (0,L)$ such that $\pi^2/(4l^2)<f'(0)\leq\pi^2/l^2$ for all $l\in (\bar{l}_2,L)$ and $ f'(0)<\pi^2/(4l^2)$ for all $l\in (0,\bar{l}_2)$.
  \item When  $f'(0)\leq\pi^2/(4L^2)$,
          then by (\ref{lambdacomparison}) and  Table \ref{lambda0Ltable}, we have
 $\lambda_1(\alpha,l)>0$ for all $ \alpha\in [0,L-l]$.
\end{itemize}
\begin{Theorem}\label{finremarkDN}
{\rm When Dirichlet boundary condition is applied at $x=0$, and Neumann boundary condition is applied at $x=L$, the following statements hold.
\begin{enumerate}
 \item[(i)] If $f'(0)>\pi^2/L^2$, then
       \begin{enumerate}
       \item for $l\in [\bar{l}_1,L)$,
            the population will persist no matter where the protection zone is;
             \item for  $l\in (\bar{l}_2,\bar{l}_1)$,
                       the population will persist if the protection zone starts near $x=L-l$ and
                    the optimal protection zone should be set up at $[L-l,L]$;

         \item if $ \lambda_1(L-\bar{l}_2,\bar{l}_2)<0$, then
                  \begin{enumerate}
         \item[(c-1)]  for  $l\in(\bar{l}_3,\bar{l}_2]$, the population will persist if the protection zone is set up at $[L-l,L]$;
             \item[(c-2)]  for  $l\in(0,\bar{l}_3]$, the population will  be extinct;
          \end{enumerate}
        \item if $ \lambda_1(L-\bar{l}_2,\bar{l}_2)\geq 0$, then for   $l\in (0,\bar{l}_2)$,
         the population will be extinct.
       \end{enumerate}
   \item[(ii)]  If $\pi^2/(4L^2)<f'(0)\leq\pi^2/L^2$, then
    the results in (i)(b)-(i)(d) are valid by replacing $\bar{l}_1$ with $L$ in (i)(b).

   \item[(iii)]   If $f'(0)\leq\pi^2/(4L^2)$, then
      the population will be extinct.
     \end{enumerate}
}
 \end{Theorem}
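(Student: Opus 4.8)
The plan is to obtain Theorem~\ref{finremarkDN} purely by combining facts already established for this boundary configuration (Case~4): the monotonicity of $\lambda_1(\alpha,l)$ in $\alpha$ from Lemma~\ref{monolambdaalphaDN}, the persistence/extinction trichotomy of Theorem~\ref{persistenceDN}, the characterization of persistence versus extinction by the sign of $\lambda_1(\alpha,l)$ via Theorem~\ref{persistenceupt}, and the monotonicity of $\lambda_1$ in the protection-zone length. Recall that the Dirichlet--Neumann configuration is carried by the reflection $y=L-x$ into the Neumann--Dirichlet problem of Case~3, under which a zone $[\alpha,\alpha+l]$ is mapped to $[L-\alpha-l,L-\alpha]$; hence $\lambda_1^{DN}(\beta,l)=\lambda_1^{ND}(L-\beta-l,l)$, and Lemma~\ref{monolambdaalphaND} (increase in the starting point for Case~3) yields Lemma~\ref{monolambdaalphaDN} (strict decrease in $\alpha$ on $[0,L-l]$). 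In particular, for every admissible $l$ the map $\alpha\mapsto\lambda_1(\alpha,l)$ attains its minimum at $\alpha=L-l$, i.e. when the zone is $[L-l,L]$; this is what identifies the optimal placement throughout, so it suffices to track the sign of $\lambda_1(L-l,l)$ together with the $l$-thresholds.

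For part (i), fix $f'(0)>\pi^2/L^2$ and let $\bar l_1=\pi/\sqrt{f'(0)}$, $\bar l_2=\pi/(2\sqrt{f'(0)})$, so that $f'(0)\ge\pi^2/l^2$ on $[\bar l_1,L)$, $\pi^2/(4l^2)<f'(0)<\pi^2/l^2$ on $(\bar l_2,\bar l_1)$, and $f'(0)<\pi^2/(4l^2)$ on $(0,\bar l_2)$, exactly as in the bulleted facts preceding the statement. On $[\bar l_1,L)$, Theorem~\ref{persistenceDN}(i) gives $\lambda_1(\alpha,l)<0$ for every $\alpha$, hence persistence regardless of placement, proving (i)(a); on $(\bar l_2,\bar l_1)$, Theorem~\ref{persistenceDN}(ii) gives persistence for $\alpha$ near $L-l$, and combined with the $\alpha$-monotonicity of Lemma~\ref{monolambdaalphaDN} this forces the optimal zone to be $[L-l,L]$, proving (i)(b). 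For the short-zone regime $l\in(0,\bar l_2)$, note that enlarging the protection subinterval $(L-l,L)$ only decreases the potential $H$ pointwise, so by the same argument as in Lemma~\ref{lambdadeplth} (monotonicity of the principal eigenvalue in $H$) the map $l\mapsto\lambda_1(L-l,l)$ is strictly decreasing, with limit $\lambda_0(L)-g'(0)=\pi^2/(4L^2)-g'(0)>0$ as $l\downarrow 0$. Hence if $\lambda_1(L-\bar l_2,\bar l_2)<0$ there is a unique $\bar l_3\in(0,\bar l_2)$ with $\lambda_1(L-l,l)<0$ on $(\bar l_3,\bar l_2]$ and $\lambda_1(L-l,l)>0$ on $(0,\bar l_3)$; since $\alpha=L-l$ minimizes $\lambda_1(\cdot,l)$, the former gives persistence for the zone $[L-l,L]$ (item (c-1)) and the latter gives $\lambda_1(\alpha,l)>0$ for all $\alpha$, hence extinction everywhere by Theorem~\ref{persistenceupt}(ii) (item (c-2)). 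If instead $\lambda_1(L-\bar l_2,\bar l_2)\ge0$, monotonicity in $l$ gives $\lambda_1(L-l,l)\ge\lambda_1(L-\bar l_2,\bar l_2)\ge0$, whence $\lambda_1(\alpha,l)>0$ for all $\alpha$ and all $l\in(0,\bar l_2)$, proving (i)(d).

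Part (ii), $\pi^2/(4L^2)<f'(0)\le\pi^2/L^2$, is the same argument with $\bar l_1$ deleted: since $l\le L$ forces $f'(0)\le\pi^2/L^2\le\pi^2/l^2$, the unconditional-persistence regime disappears, while $\bar l_2=\pi/(2\sqrt{f'(0)})<L$ still exists and on $(\bar l_2,L)$ one has $\pi^2/(4l^2)<f'(0)\le\pi^2/l^2$, so this interval plays the role of (i)(b) — persistence with optimal zone $[L-l,L]$ — and the analysis of $(0,\bar l_2)$ with $\bar l_3$ is verbatim, giving (i)(b)--(i)(d) with $\bar l_1$ replaced by $L$ in (i)(b). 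Part (iii): when $f'(0)\le\pi^2/(4L^2)=\lambda_0(L)$, the comparison $(\ref{lambdacomparison})$ of Lemma~\ref{lambdafg1} together with Table~\ref{lambda0Ltable} gives $0\le\lambda_0(L)-f'(0)=\lambda_{L,f}<\lambda_1(\alpha,l)$ for all admissible $(\alpha,l)$, so $\lambda_1(\alpha,l)>0$ always and Theorem~\ref{persistenceupt}(ii) yields extinction regardless of the protection zone.

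The only delicate point in the whole argument is the bookkeeping: one must verify that the reflection $y=L-x$ genuinely transports the Case~3 monotonicity (Lemma~\ref{monolambdaalphaND}) and trichotomy (Theorem~\ref{persistenceDN}) to Case~4, that the $\alpha$-minimum is consistently taken at $\alpha=L-l$, and that $l\mapsto\lambda_1(L-l,l)$ is monotone — here one cannot cite Lemma~\ref{lambdadeplth} literally, since both the length and the starting point of the zone move, but the pointwise inequality $-f'(0)<-g'(0)$ makes the potential decrease as $l$ grows, so the Hess monotonicity used there still applies. There is no analytic difficulty beyond these verifications; everything else is packaged in Lemmas~\ref{lambdafg1}, \ref{monolambdaalphaDN} and Theorems~\ref{persistenceupt}, \ref{persistenceDN}.
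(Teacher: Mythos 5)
Your proposal is correct and follows essentially the same route as the paper: the paper derives Theorem \ref{finremarkDN} directly from the reflection $y=L-x$ linking Case 4 to Case 3, the monotonicity of $\lambda_1(\alpha,l)$ in $\alpha$ (Lemma \ref{monolambdaalphaDN}), the trichotomy of Theorem \ref{persistenceDN}, and the bulleted threshold facts about $\bar{l}_1,\bar{l}_2,\bar{l}_3$. Your write-up in fact supplies slightly more detail than the paper (explicit formulas for $\bar{l}_1,\bar{l}_2$ and the Hess-monotonicity justification that $l\mapsto\lambda_1(L-l,l)$ decreases, which the paper leaves implicit when asserting the existence of $\bar{l}_3$).
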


From Cases 1-4, we see that if the boundary conditions only involve Neumann type and Dirichlet type, then to help population persist in the habitat, the protection zone should be close to the end with Neumann boundary condition but away from the end with Dirichlet condition;
 the longer the protection zone is the easier it is to help population persist;  if the growth rate in the protection zone is too low ($f'(0)<\pi^2/L^2$ in Case 2 or $f'(0)<\pi^2/(4L^2)$ in Cases 3 and 4) then a protection zone may not help population persist in the whole habitat.

\vskip4pt
{\bf Case 5. Robin boundary conditions ($\alpha_1\varphi'(0)-\alpha_2\varphi(0)=0$) at $x=0$ or ($\beta_1\varphi'(L)+\beta_2\varphi(L)=0$) at $x=L$}.

When the Robin boundary condition is applied at $x=0$ or at $x=L$, by analyzing the sign of $\tan
\tilde{f}l$ in (\ref{RRtanequ1}) and the sign of ${\lambda_1}_\alpha$ in (\ref{lambdaalphagl0}), we can obtain the following results.

\begin{figure}[t!]
\centering
\includegraphics[height=2in,
width=2in]{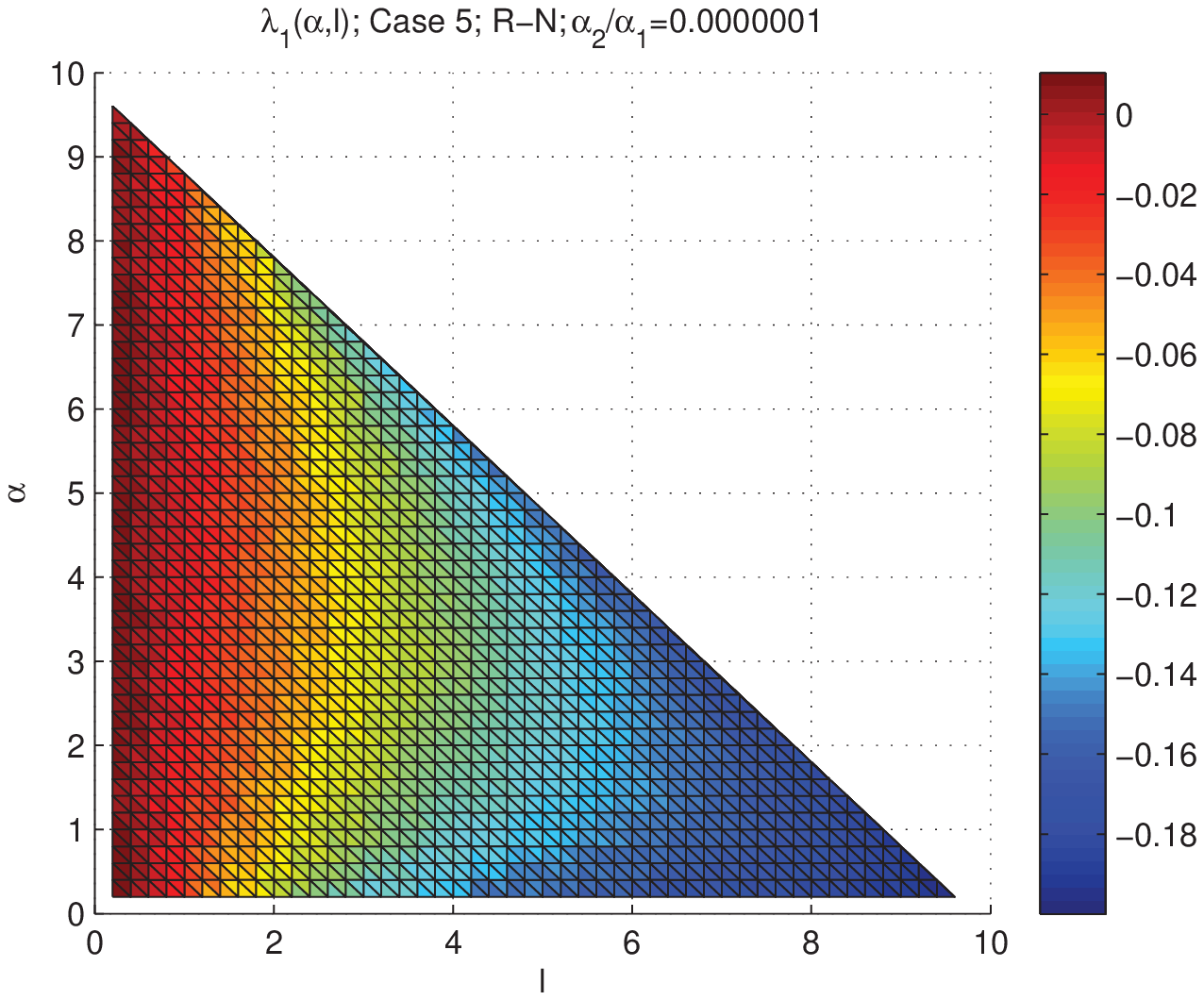}
\includegraphics[height=2in,
width=2in]{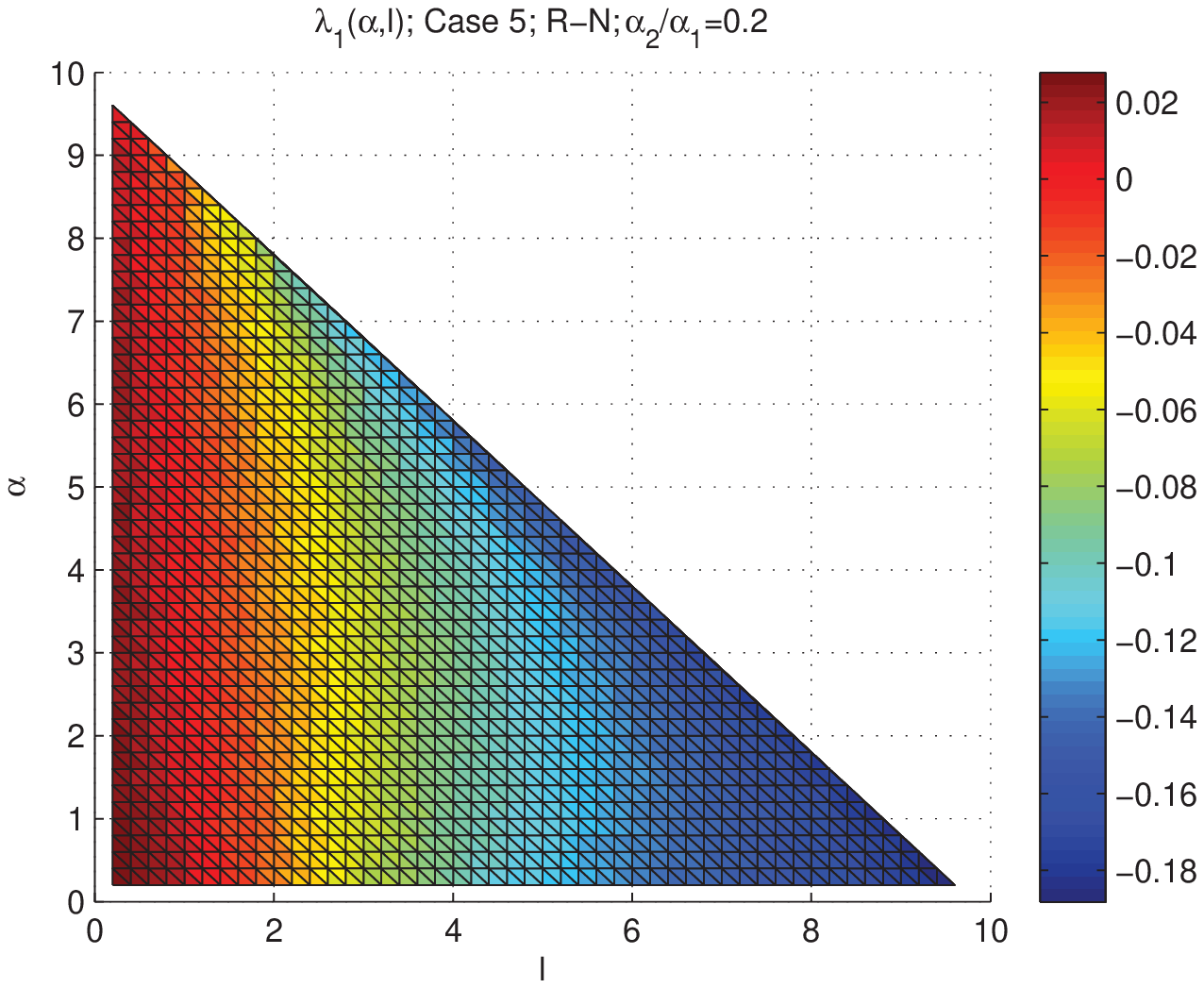}
\includegraphics[height=2in,
width=2in]{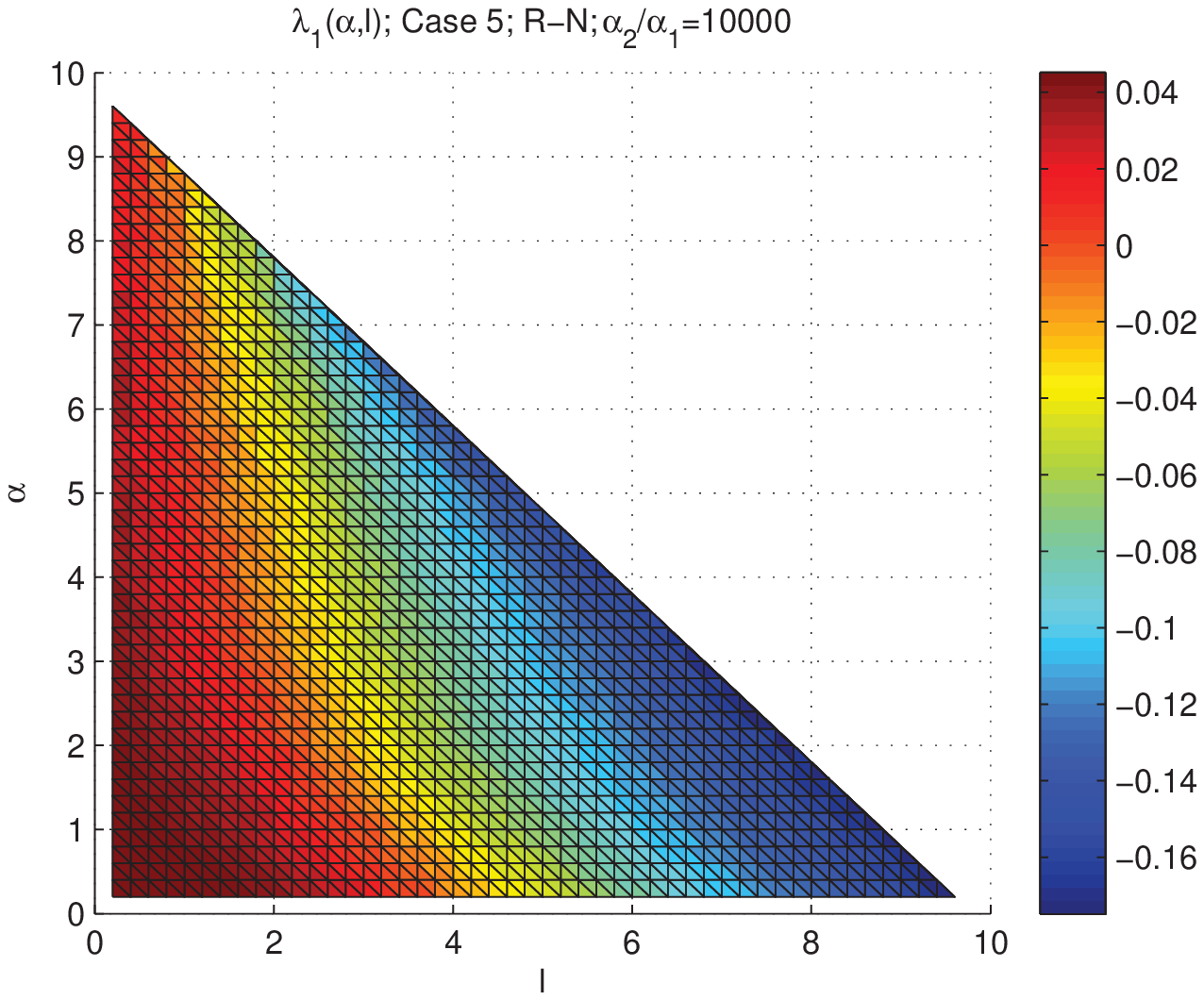}
\caption{The relation among $\lambda_1(\alpha,l)$ and $\alpha$ and $l$ in Case 5. In the figures, ``R-N" represents that the Robin boundary condition ($\alpha_1\varphi'(0)-\alpha_2\varphi(0)=0$) is applied at $x=0$ and the Neumann boundary condition is applied at $x=L$. Here $f(u)=ru(1-u)$ and $g(u)=ru(1-u)(u-a)$. Parameters: $r=0.2$, $a=0.1$, $L=10$.} \label{lambda1alphalRN}
\end{figure}

\begin{Theorem}\label{Thcases6to8}
{\rm The following statements hold.
\begin{enumerate}
\item[(i)] When Robin boundary condition ($\alpha_1\varphi'(0)-\alpha_2\varphi(0)=0$) is applied at $x=0$ and Neumann boundary condition is applied at $x=L$,
if  $\alpha_1>0$, $\alpha_2>0$, and $\alpha_2/\alpha_1$ is sufficiently small, then
 the results in Theorem \ref{persistenceNN} are true;
if  $\alpha_1>0$, $\alpha_2>0$, and $\alpha_2/\alpha_1$ is sufficiently large, then
 the results in Theorem \ref{persistenceDN} are true.
\item[(ii)]  When Robin boundary condition ($\alpha_1\varphi'(0)-\alpha_2\varphi(0)=0$) is applied at $x=0$ and Dirichlet boundary condition is applied at $x=L$,
if  $\alpha_1>0$, $\alpha_2>0$, and $\alpha_2/\alpha_1$ is sufficiently small, then
 the results in Theorem \ref{persistenceND} are true;
if  $\alpha_1>0$, $\alpha_2>0$, and $\alpha_2/\alpha_1$ is sufficiently large, then
 the results in Theorem \ref{persistenceDD} are true.
\item[(iii)]  When Neumann boundary condition is applied at $x=0$ and Robin boundary condition ($\beta_1\varphi'(L)+\beta_2\varphi(L)=0$) is applied at $x=L$,
if  $\beta_1>0$, $\beta_2>0$, and $\beta_2/\beta_1$ is sufficiently small, then
 the results in Theorem \ref{persistenceNN} are true;
if  $\beta_1>0$, $\beta_2>0$, and $\beta_2/\beta_1$ is sufficiently large, then
 the results in Theorem \ref{persistenceND} are true.
  \item[(iv)]  When Dirichlet boundary condition is applied at $x=0$ and Robin boundary condition  ($\beta_1\varphi'(L)+\beta_2\varphi(L)=0$) is applied at $x=L$,
if  $\beta_1>0$, $\beta_2>0$, and $\beta_2/\beta_1$ is sufficiently small, then
 the results in Theorem \ref{persistenceDN} are true;
if  $\beta_1>0$, $\beta_2>0$, and $\beta_2/\beta_1$ is sufficiently large, then
 the results in Theorem \ref{persistenceDD} are true.
\end{enumerate}
}
\end{Theorem}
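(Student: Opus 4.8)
The plan is to derive all four sub-cases as small perturbations of the pure Neumann/Dirichlet problems already settled in Cases 1--4. The key observation is that every structural identity of Sections 3.2--3.3 --- the transcendental relation \eqref{RRtanequ1}, the derivative formula \eqref{lambdaalphagl0}, and the (H2)/(H3) analogues in Lemmas \ref{lambdaalphataneqnsgeq0} and \ref{lambdaalphataneqnsgg0} --- was written for \emph{general} Robin data, hence applies verbatim here, with $R_1,R_2,\hat R_1,\hat R_2$ of \eqref{R12def} and \eqref{ABCDT1T2def} now carrying the boundary coefficients. Writing $\rho_\alpha=\alpha_2/\alpha_1$ and $\rho_\beta=\beta_2/\beta_1$, one checks directly from \eqref{R12def} that, for fixed $\tilde f,\tilde g$, as $\rho_\alpha\to 0^+$ one has $R_1\to 1$ and $\hat R_1\to 0$ (the Neumann-at-$0$ values used in Case 1), while as $\rho_\alpha\to+\infty$ one has $R_1\to -1$ and $\hat R_1\to 0$ (the Dirichlet-at-$0$ values used in Cases 2--3); the symmetric statements hold for $R_2,\hat R_2$ in terms of $\rho_\beta$. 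Combining this with the continuous --- in fact real-analytic --- dependence of the principal eigenpair $(\lambda_1,\varphi_1)$ of \eqref{model1dgelineig0} on the boundary coefficients (see e.g.\ \cite{Hessbook}), I obtain that both $\lambda_1(\alpha,l)$ and $\partial_\alpha\lambda_1(\alpha,l)$ converge, \emph{uniformly for} $\alpha\in[0,L-l]$, to the corresponding quantities of the relevant limiting problem: in (i) the Neumann--Neumann problem as $\rho_\alpha\to0$ and the Dirichlet--Neumann problem as $\rho_\alpha\to\infty$; in (ii) the Neumann--Dirichlet problem as $\rho_\alpha\to0$ and the Dirichlet--Dirichlet problem as $\rho_\alpha\to\infty$; in (iii) Neumann--Neumann as $\rho_\beta\to0$ and Neumann--Dirichlet as $\rho_\beta\to\infty$; in (iv) Dirichlet--Neumann as $\rho_\beta\to0$ and Dirichlet--Dirichlet as $\rho_\beta\to\infty$. (The uniform convergence of $\partial_\alpha\lambda_1$ may also be obtained, bypassing \eqref{lambdaalphagl0}, from the Hadamard identity $\partial_\alpha\lambda_1=(f'(0)-g'(0))\bigl(\varphi_1(\alpha)^2-\varphi_1(\alpha+l)^2\bigr)/\|\varphi_1\|_2^2$ together with the uniform $C^1$-convergence of $\varphi_1$.)

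Next I transfer the qualitative structure. Each assertion in Theorems \ref{persistenceNN}, \ref{persistenceDD}, \ref{persistenceND}, \ref{persistenceDN} is a consequence of two facts about the limiting problem alone: the monotonicity pattern of $\lambda_1(\cdot,l)$ in $\alpha$, and the signs of $\lambda_1$ at the distinguished points $\alpha=0,(L-l)/2,L-l$. When the limiting problem is of Dirichlet--Neumann or Neumann--Dirichlet type, $\partial_\alpha\lambda_1$ has a \emph{strict} sign on the whole compact interval $[0,L-l]$ (Lemmas \ref{monolambdaalphaDN}, \ref{monolambdaalphaND}); hence for $\rho$ close enough to its limit $\partial_\alpha\lambda_1$ keeps that sign, $\lambda_1(\cdot,l)$ stays strictly monotone, the endpoint values converge to those of the limiting problem, and the persistence/extinction conclusions follow from Theorem \ref{persistenceupt} exactly as in the proofs of those theorems. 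When the limiting problem is Neumann--Neumann or Dirichlet--Dirichlet, $\partial_\alpha\lambda_1$ of that problem is strictly positive (resp.\ negative) on $[0,(L-l)/2)$ and strictly negative (resp.\ positive) on $((L-l)/2,L-l]$; fixing a small $\epsilon>0$, these strict signs persist under perturbation on $[0,(L-l)/2-\epsilon]\cup[(L-l)/2+\epsilon,L-l]$, so for $\rho$ near its limit $\lambda_1(\cdot,l)$ is still increasing then decreasing (resp.\ the reverse) off the $\epsilon$-window, whence it attains its extremum over $[0,L-l]$ inside the window; since $\lambda_1(\cdot,l)$ converges uniformly there and $(L-l)/2$ is the maximizer (resp.\ minimizer) of the limiting $\lambda_1$, letting $\epsilon\downarrow0$ shows the extremal value of $\lambda_1$ for the Robin problem converges to the value $\lambda_1((L-l)/2,l)$ of the limiting problem. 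Consequently every sign dichotomy that drives Theorems \ref{persistenceNN} and \ref{persistenceDD} --- whether $\max_\alpha\lambda_1<0$, whether $\lambda_1((L-l)/2,l)$ or $\lambda_1(0,l)$ is positive or negative --- holds for the Robin problem once $\rho$ is close enough to the corresponding limit, on the parameter set where the limiting theorem already gives a strict verdict, and the matching conclusions (including the prescription for the optimal placement) carry over.

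\emph{The main obstacle} is making the uniform-in-$\alpha$ convergence of the \emph{derivative} $\partial_\alpha\lambda_1$ precise, since it is this, and not merely the convergence of $\lambda_1$ itself, that allows the monotonicity pattern to be imported. I would argue it through the Hadamard identity above: $\varphi_1$ depends continuously on $(\alpha,\rho)$ in $C^1([0,L])$, $[0,L-l]$ is compact, so $\varphi_1(\alpha)^2-\varphi_1(\alpha+l)^2$ converges uniformly in $\alpha$ while $\|\varphi_1\|_2$ stays bounded away from $0$; alternatively one shows the denominator $E_{{\lambda_1}_\alpha}$ of \eqref{lambdaalphagl0} (and of its (H2)/(H3) analogues) stays bounded away from zero near each limit. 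A secondary point is that the explicit derivative formulas are organized by the sign of $g'(0)+\lambda_1$ into the three regimes (H1)--(H3): near a Neumann--Neumann limit one stays in (H1) throughout (Remark \ref{remarksec2}), and near a Dirichlet limit one stays in whichever single regime that limiting problem occupies, the non-generic borderline $g'(0)+\lambda_1=0$ being harmless since the conclusions agree on its two sides. Finally, the thresholds ``$\rho_\alpha,\rho_\beta$ sufficiently small/large'' are to be taken for each fixed $l$ (and the remaining fixed parameters), and the phrase ``the results of Theorem \ref{persistenceNN} (etc.)\ are true'' is read branch by branch, with ``optimal protection zone at $[(L-l)/2,(L+l)/2]$'' understood as ``near $(L-l)/2$'', since the Robin perturbation breaks the reflection symmetry \eqref{lambda0lLlequal} that pins the optimum exactly to the midpoint.
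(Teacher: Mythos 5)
Your proposal is correct, and it is worth noting that the paper itself offers essentially no proof of this theorem: it consists of the single sentence that the results follow ``by analyzing the sign of $\tan\tilde{f}l$ in (\ref{RRtanequ1}) and the sign of ${\lambda_1}_\alpha$ in (\ref{lambdaalphagl0})'', backed by the numerical evidence of Figures \ref{lambda1alphalRN}--\ref{lambda1alphalRD}. Your first paragraph (checking that $R_1\to\pm1$, $\hat R_1\to 0$ as $\rho_\alpha\to 0^+$ or $+\infty$, and likewise for $R_2,\hat R_2$) is exactly the computation the authors have in mind, but you then go further along a genuinely different route: instead of redoing the sign analysis of the transcendental relation and of $E_{{\lambda_1}_\alpha}$ for general Robin coefficients, you transfer the monotonicity patterns and endpoint/midpoint sign dichotomies from the four limiting problems by uniform convergence of $\lambda_1$ and $\partial_\alpha\lambda_1$ on the compact parameter interval $[0,L-l]$. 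Two ingredients of yours are real improvements over the paper's machinery: the Hadamard identity $\partial_\alpha\lambda_1=(f'(0)-g'(0))\bigl(\varphi_1(\alpha)^2-\varphi_1(\alpha+l)^2\bigr)/\|\varphi_1\|_2^2$ (which I have verified; since $f'(0)-g'(0)>0$ it reduces every monotonicity lemma of Section 3.3 to comparing the eigenfunction at the two edges of the zone, and bypasses the case split (H1)--(H3) entirely), and the $\epsilon$-window argument at $\alpha=(L-l)/2$, which is needed precisely because the strict sign of $\partial_\alpha\lambda_1$ degenerates at the critical point of the Neumann--Neumann and Dirichlet--Dirichlet limits. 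Your caveats are also the right ones: the conclusions transfer only where the limiting theorem gives a strict verdict, and the ``optimal'' midpoint placement in Theorem \ref{persistenceDD}(iii) must be read as ``near $(L-l)/2$'' once the Robin perturbation breaks the symmetry (\ref{lambda0lLlequal}). The only point I would insist you write out in a final version is the $C^1$-convergence of the principal eigenpair in the singular Dirichlet limit $\rho\to\infty$, which is standard for one-dimensional Sturm--Liouville problems but is the one place where ``continuous dependence on the boundary coefficients'' is not a one-line citation.
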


 Figures \ref{lambda1alphalRN} and \ref{lambda1alphalRD} show the dependence of $ \lambda_1(\alpha,l)$ on $\alpha$ and $l$ for different ratios of $\alpha_2/\alpha_1$ when Robin boundary condition ($\alpha_1\varphi'(0)-\alpha_2\varphi(0)=0$) is applied at $x=0$ and Neumann or Dirichlet boundary condition is applied at $x=L$. The phenomena in the figures confirm the results in
 Theorem \ref{Thcases6to8}.
\begin{figure}[t!]
\centering
\includegraphics[height=2in,
width=2in]{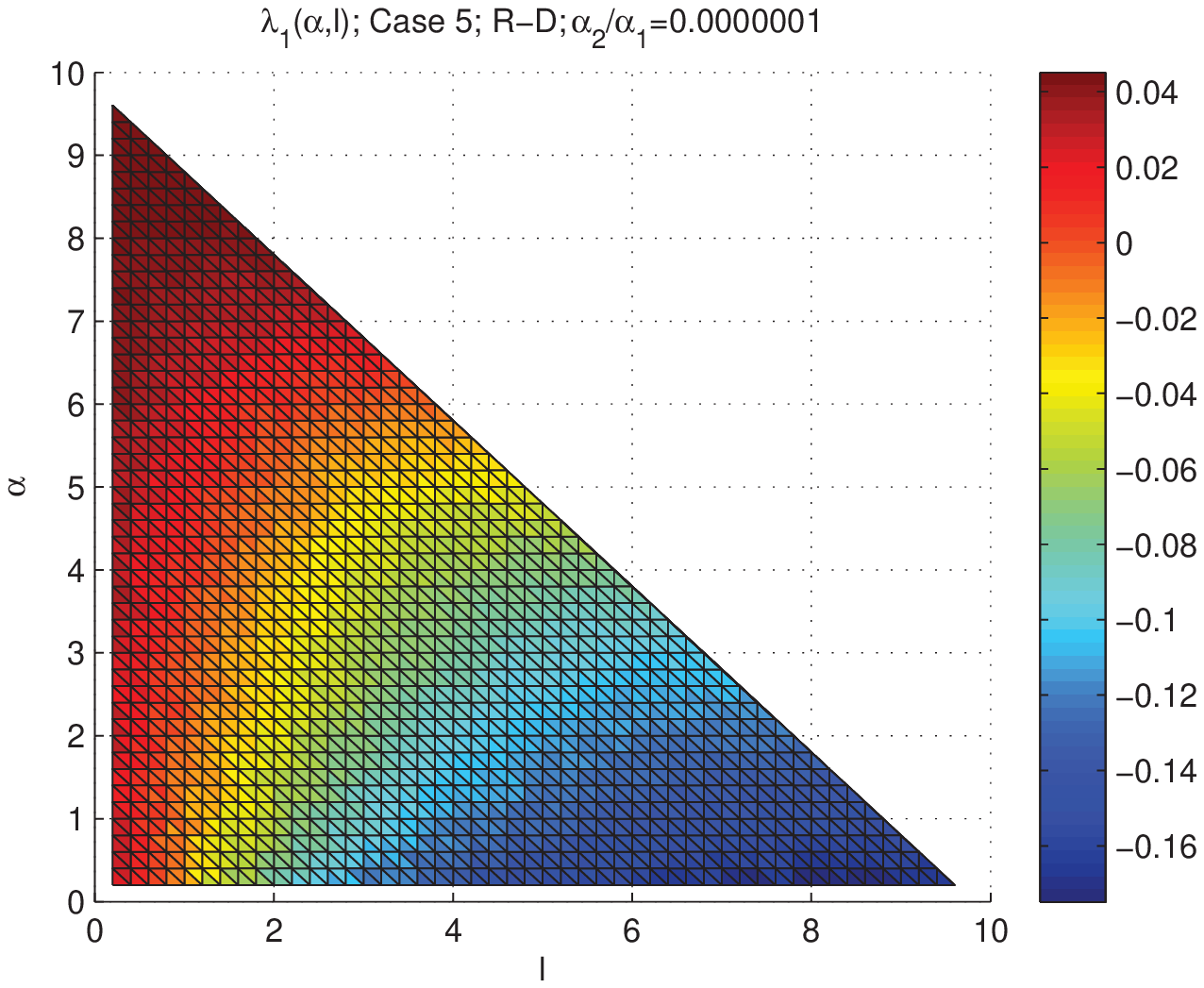}
\includegraphics[height=2in,
width=2in]{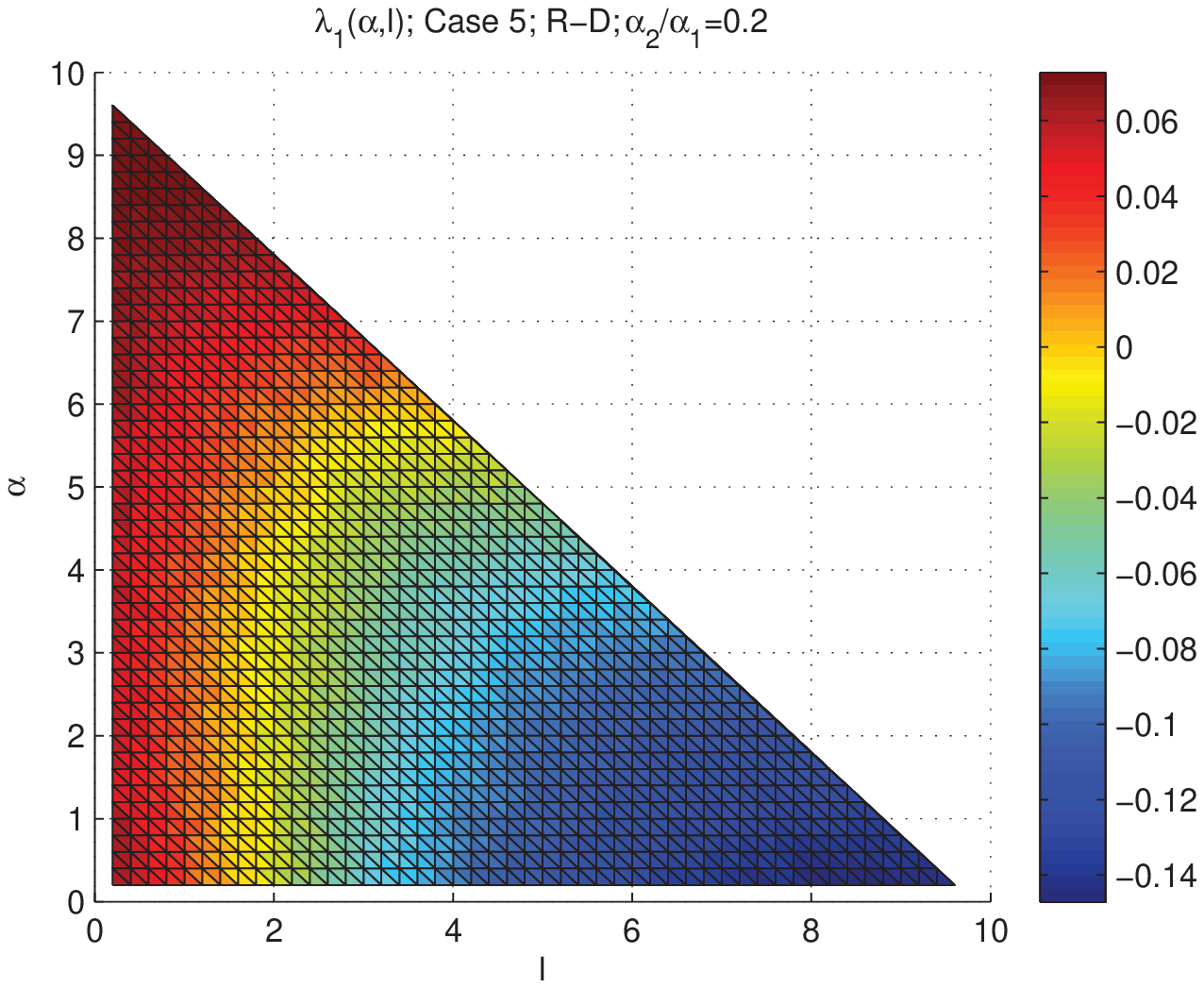}
\includegraphics[height=2in,
width=2in]{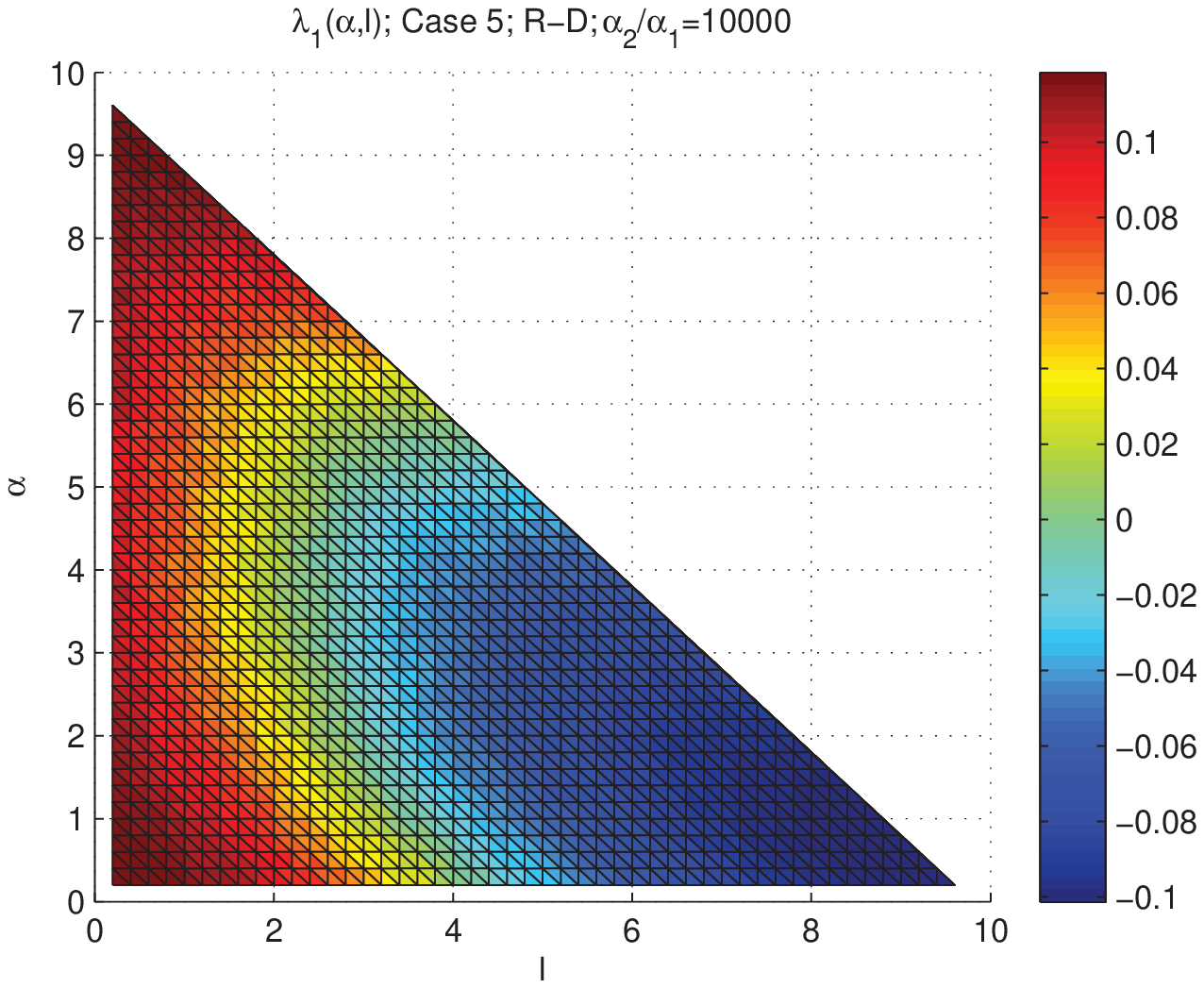}
\caption{The relation among $\lambda_1(\alpha,l)$ and $\alpha$ and $l$ in Case 5. ``R-D" represents that the Robin boundary condition ($\alpha_1\varphi'(0)-\alpha_2\varphi(0)=0$) is applied at $x=0$ and the Dirichlet boundary condition is applied at $x=L$.
Other parameters are the same as in Figure \ref{lambda1alphalRN}.
} \label{lambda1alphalRD}
\end{figure}

Figure \ref{solutiondiffbcs} shows the time evolution of a solution of \eqref{model1dge}-\eqref{udbcs} under different boundary conditions and with different protection zone locations. It indicates that when the initial population density is low, the same setup of the protection zone may lead to population persistence or extinction under different boundary conditions and  a different setup of the protection zone may help population persist in the whole domain even though the boundary conditions don't change. In particular, with the parameters in Figure \ref{solutiondiffbcs}, we see that
with the protection zone on the interval $[1,4]$ in the total habitat $[0,10]$, the population can persist if the boundary condition is Neumann at $x=0$ but it will be extinct if the boundary condition is Dirichlet at $x=0$; see Figure \ref{solutiondiffbcs} (a-b) and (d-e). If the boundary condition is Dirichlet at $x=0$, then shifting the protection zone from the interval $[1,4]$ to the interval $[3,6]$ helps the population persist; see Figure \ref{solutiondiffbcs} (b-c) and (e-f).

\begin{figure}[t!]
\centering
\subfigure[]{ \label{left}\hspace{-0.2cm}\includegraphics[height=2in,
width=2in]{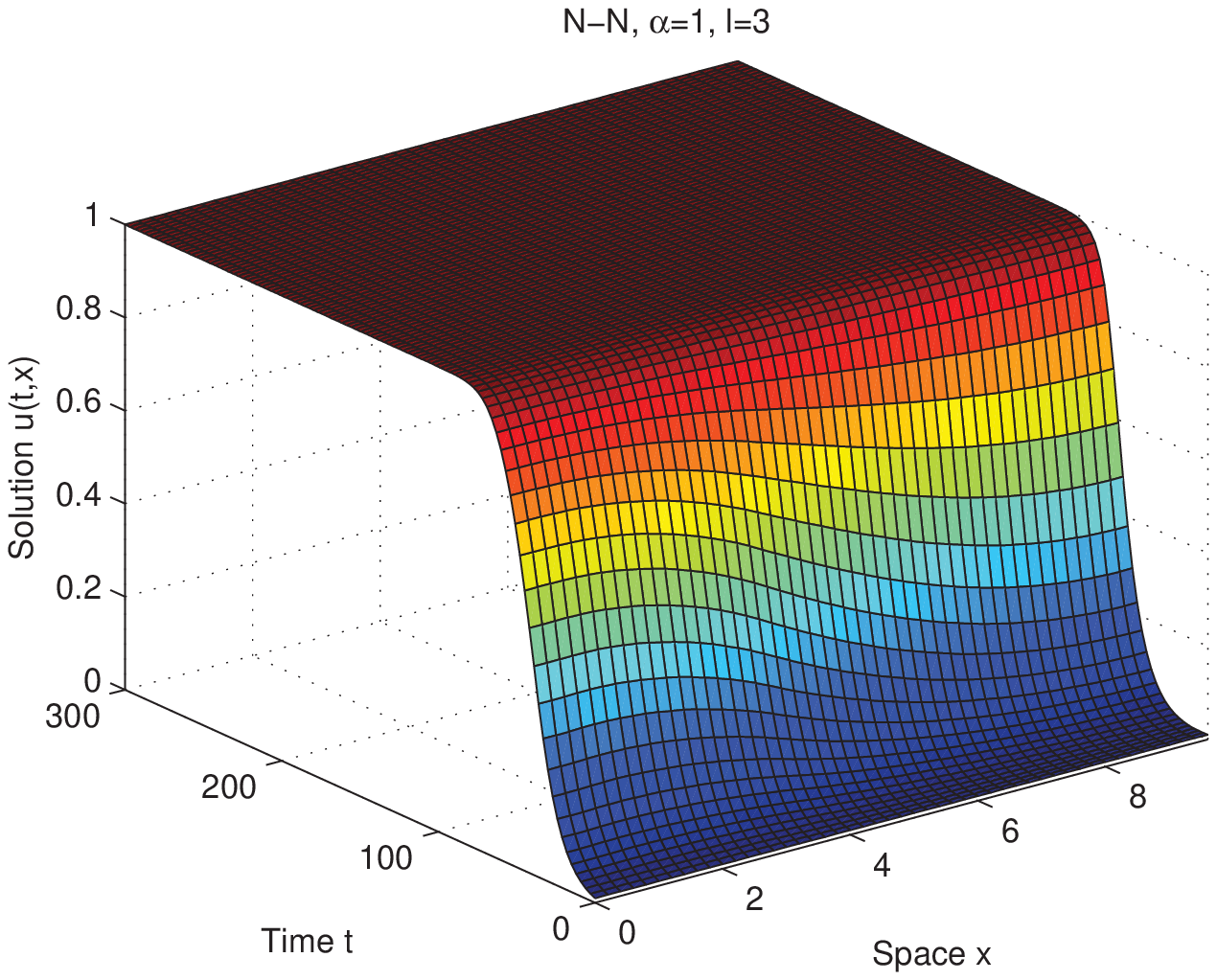}}
\hspace{-0.2cm}
  \subfigure[]{\label{right}\includegraphics[height=2in,
width=2in]{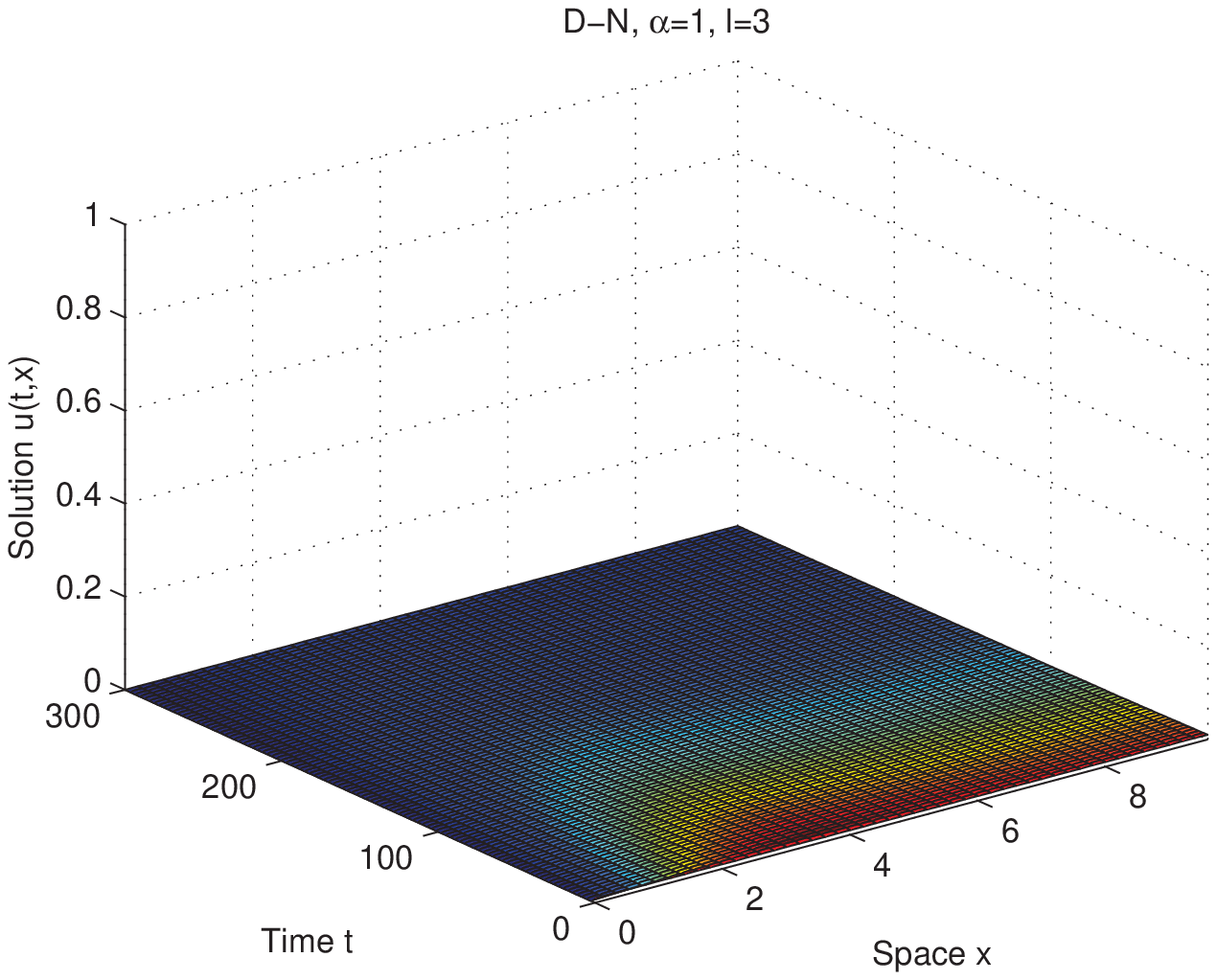}}
   \subfigure[]{\label{left}\hspace{-0.2cm} \includegraphics[height=2in,
width=2in]{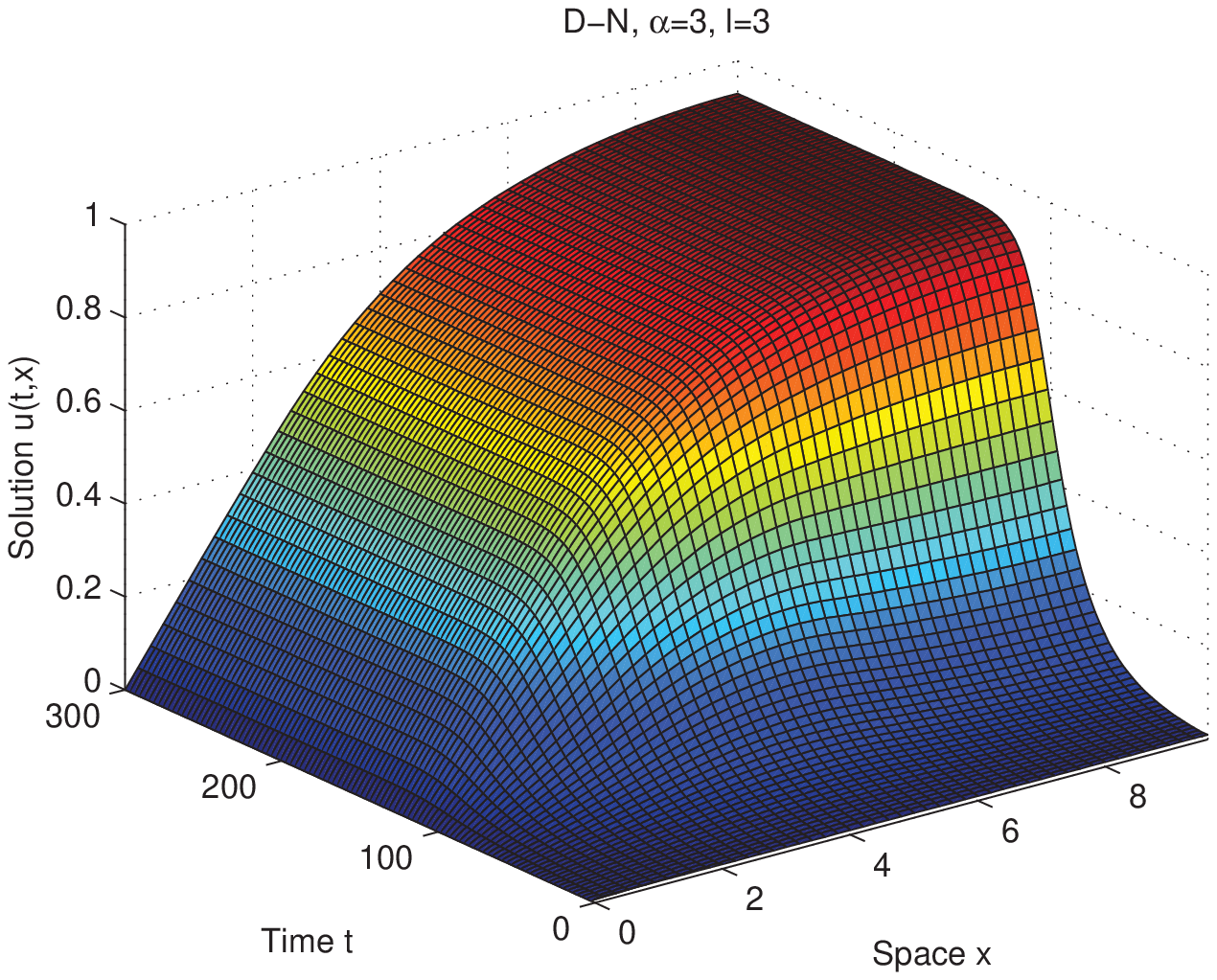}}\\
   \subfigure[]{\label{right} \hspace{-0.2cm}
   \includegraphics[height=2in,
width=2in]{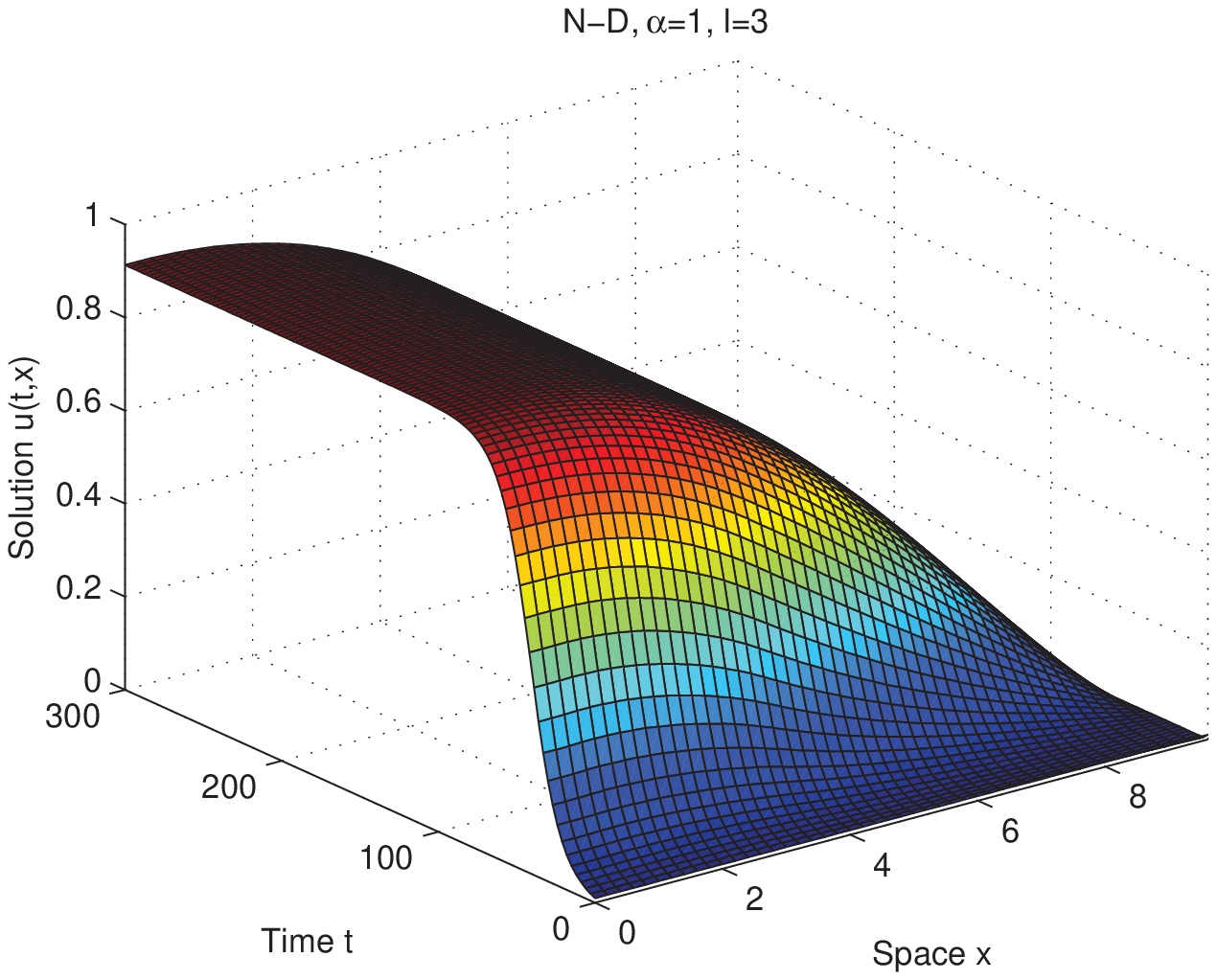}}
 \subfigure[]{\label{right} \hspace{-0.2cm}\includegraphics[height=2in,
width=2in]{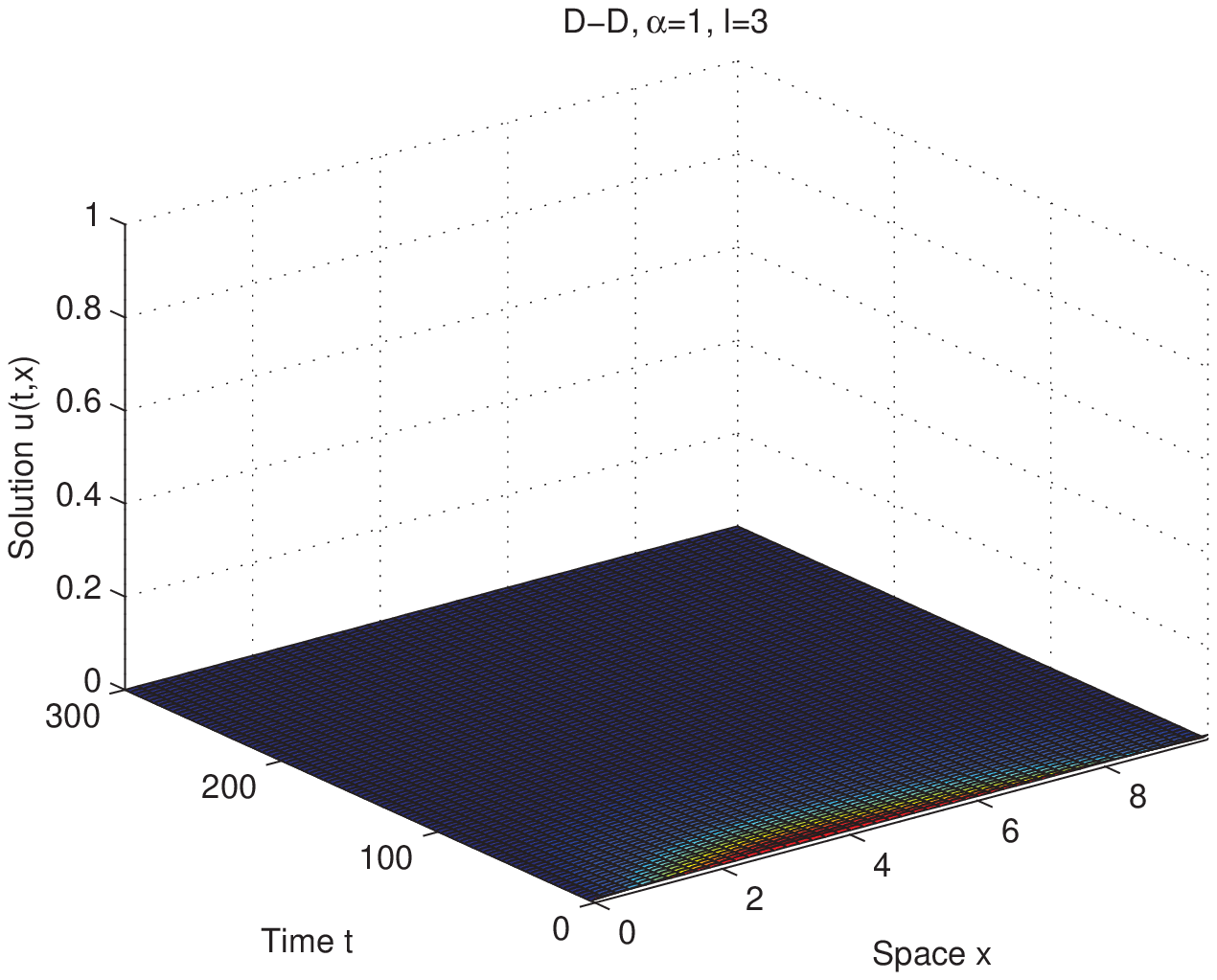}}
 \subfigure[]{\label{right} \hspace{-0.2cm}\includegraphics[height=2in,
width=2in]{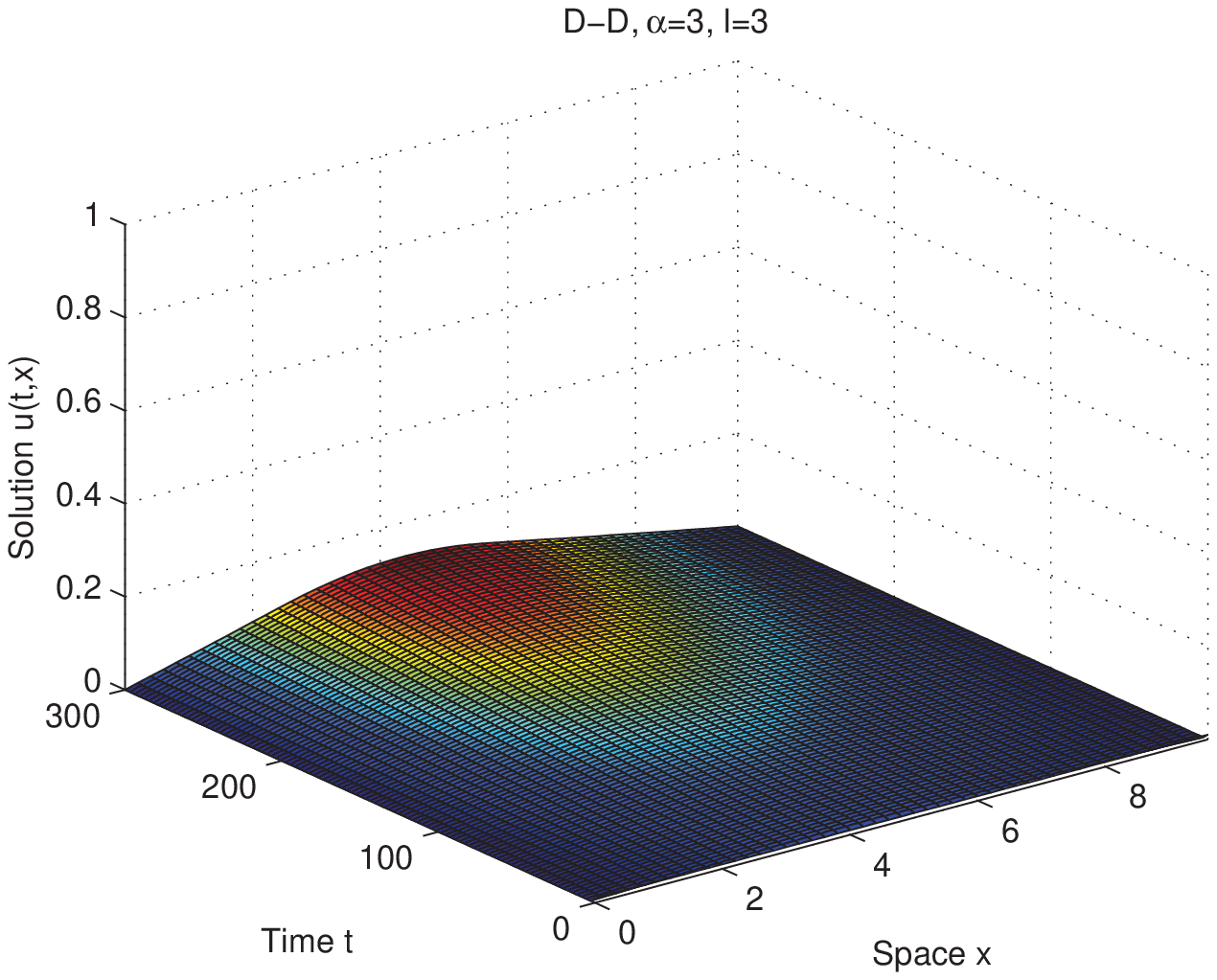}}
 \caption{The solutions of \eqref{model1dge}-\eqref{udbcs} under different boundary conditions and with different protection zone locations.
  Other parameters are the same as in Figure \ref{lambda1alphalRN}. The initial conditions are $u_0(x)\equiv 0.01$.}
 \label{solutiondiffbcs}
\end{figure}

\section{Discussion}

Designing effective strategies to protect an endangered species is
important for the conservation of the species itself
 as well as the conservation of the habitat and the health of
  the involved ecosystems. Protection zones for an endangered or native species have been observed to provide many economic,
social, environmental, and cultural values \cite{Holt1977,Holt1984,Ami2005,Jerry2010,Loisel,Halpern}. Mathematical models have been developed
  to investigate the feasibility or efficiency of protection zones.
  Previous studies have been mainly focused on qualitative analysis and  revealed that the size of a protection zone is always positively related to the persistence status of the population but when the size of the protection zone is small other factors such as the demographic or dispersal parameters also play important roles on the persistence of the population (see e.g., \cite{Dushi2006,Duliang2008,Dupengwang2009}).

   In this work,
  we consider a reaction-diffusion model for a species in a one-dimensional bounded
   domain $[0,L]$. We assume that the growth of the species in
 its natural domain is subjected to a strong Allee effect, that is, its growth rate is negative when the population density is low, and hence, the population will not be able to recover once its density is below a certain level and the species will eventually be extinct. To protect the species, we assume that there is a protection zone $[\alpha,\alpha+l]$ in the habitat where the population's growth satisfies a logistic type  function and the population can grow to its carrying capacity there.
 We have proved that the principal eigenvalue $\lambda_1(\alpha,l)$ of the  eigenvalue problem associated with the linearization of the system at the trivial solution can be used to determine population persistence ($\lambda_1(\alpha,l)<0$) and extinction ($\lambda_1(\alpha,l)>0$) (see Theorem \ref{persistenceupt}). Our goal is to specify
the relations between $\lambda_1(\alpha,l)$ and the parameters related to the protection zone and then provide precise strategies for an optimal protection zone in order for the population to persist.

  It is not surprising to obtain that $\lambda_1(\alpha,l)$ is always a decreasing function of the length of the protection zone $l$ (see Lemma \ref{lambdadeplth}), which implies that the longer the protection zone is, the easier it is for the
population to persist in the whole habitat. This is intuitively understandable and coincides with previous findings (see e.g., \cite{Dushi2006,Duliang2008,Dupengwang2009,Dupeng2019}).
The dependence of $\lambda_1(\alpha,l)$ on the starting point $\alpha$ of the protection zone is described in terms of the derivative of  $\lambda_1(\alpha,l)$  with respect to $\alpha$, denoted by ${\lambda_1}_\alpha$, but the sign of ${\lambda_1}_\alpha$ is complicated
and dependent of the boundary conditions.
We study it in the cases of different combinations of Neumann and Dirichlet conditions. It turns out that ${\lambda_1}_\alpha$ is positive (resp. negative) for all $\alpha\in [0,L-l)$ if the boundary conditions are Neumann (at $x=0$)-Dirichlet (at $x=L$) (resp. Dirichlet-Neumann) and that  ${\lambda_1}_\alpha$ is positive (resp. negative) first and then negative (resp. positive) when $\alpha$ increases from $0$ to $L-l$ if the boundary conditions are Neumann-Neumann (resp. Dirichlet-Dirichlet); see Lemmas \ref{monolambdaalphaNN}, \ref{monolambdaalphaDD},
 \ref{monolambdaalphaND}, and \ref{monolambdaalphaDN}.

By using the dependence of $\lambda_1(\alpha,l)$ on $\alpha$ and $l$, we then can investigate the effect of the protection zone on population persistence and extinction. In fact, if the growth rate in the protection zone is sufficiently large, then the population will persist in the whole domain in a long run regardless of the initial distribution and the location of the protection zone provided that the length of the protection zone is sufficiently large (see Theorems \ref{finremarkNN}(i)(a), \ref{finremarkDD}(i)(a), \ref{finremarkND}(i)(a), \ref{finremarkDN}(i)(a)); if the growth rate in the protection zone is sufficiently small, then there are always initial data (typically all sufficiently small initial data) such that the population will be extinct no matter where
the protection zone is or how long the protection zone is  (see Theorems  \ref{finremarkDD}(ii), \ref{finremarkND}(iii), \ref{finremarkDN}(iii));
otherwise, the population may persist when the protection zone is sufficiently long and is set up at some optimal locations  (see Theorems \ref{finremarkNN}((i)(b-1), (ii)(a)), \ref{finremarkDD}(i)(b-1), \ref{finremarkND}((i)(b), (i)(c-1), (ii)), \ref{finremarkDN}((i)(b), (i)(c-1), (ii))),
while the population will be extinct if the protection zone is not long enough or  is set up at a bad location  (see Theorems \ref{finremarkNN}((i)(b-2), (i)(c), (ii)(b)), \ref{finremarkDD}((i)(b-2), (i)(c), (ii)), \ref{finremarkND}((i)(c-2), (i)(d), (ii)), \ref{finremarkDN}((i)(c-2), (i)(d), (ii))). Note that in this paper, by ``extinction", we mean that the population will be extinct if its initial distribution density is sufficiently low.
 By comparing these results, we see that, in general, if there is a Neumann boundary end, then the optimal location for the protection zone, if needed, is the part of the domain connecting the Neumann boundary end; if both ends are Dirichlet, then the optimal location for the protection zone, if needed, should be in the middle of the domain.
 Due to the mathematical complexity rising in the cases of Robin boundary conditions, we have not obtained all detailed results about the optimal locations for the protection zone in these cases, but we have seen that if the Robin boundary condition is more like a Neumann (or Dirichlet) condition, then the results related to the system is more like those related to the system with a Neumann (or Dirichlet) condition.

We complete this paper by proposing several interesting problems that deserve further consideration.
The first one is to extend the study of the effects of protection zones on spatial population dynamics to the time-periodic setting.
Many plant and animal species have demonstrated seasonal population dynamics in response
to seasonal fluctuations and time-periodic varying environments, in particular, the weather conditions \cite{WangOgdenZhu}.
The spatiotemporal environments play an important role in characterizing the persistence and extinction of some species, such as diseases transmission \cite{Allen,Pengzhao2012,WangZhao}, single population growth \cite{CC1989}. Some plants may become extinct in arid seasons, while vegetate densely in moist climate, so it is interesting to investigate how protection zones can prevent population from extinction though the species lives in some arid seasons.
Another issue is to concern the protection mechanism in a network.
A real river system usually has rich topological structures, which greatly influence the population growth
and spread of organisms living in it \cite{Grant}. It is spontaneous and crucial to prevent endangered species in real river systems from extinction. From a mathematical point of view, the addition of advection to reaction-diffusion models may change the long
term outcome (persistence/extinction) of the population \cite{HsuLou,JinLewis} in river systems. Based on a local river reaction-diffusion equation, \cite{JinPengShi,DuLouPengZhou,Rami} studied the dynamical behavior of species spreading from a location in a river network where two or three branches meet. It has been found that both the water flow speeds in the river branches and the cross section areas can affect the extinction/persistence of species. Nevertheless, to our knowledge, there hasn't been any understanding about the long-term  population dynamics when a protection measure is considered to ensure the perpetual persistence in a river network. This should also be an interesting problem for future work.

\section{Appendix}

\subsection{Proof of Lemma \ref{lambdaalphataneqnsgeq0}}\label{applemmalambdaalphataneqnsgeq0}

Assume that  {\bf (H2)} is true, i.e., $g'(0)+\lambda_1(\alpha,l)=0.
$
We can write the general form of $\varphi_1$ in (\ref{model1dgelineig}) on $[0,L]$ as
\begin{equation}\label{varphi1solutiong0ap}
\varphi_1(x)=\left\{
\begin{array}{ll}
C_3+C_4x, &
\text{for}\;\;x\in[0,\alpha],\\
C_1\cos\tilde{f}x+C_2\sin\tilde{f}x, &
\text{for}\;\;x\in[\alpha,\alpha+l],\\
C_5+C_6x, &
\text{for}\;\;x\in[\alpha+l,L],
\end{array}
\right.
\end{equation}
where each $C_i$ ($i=1,\cdots,6$) is a constant. Then
\begin{equation*}
\varphi_1'(x)=\left\{
\begin{array}{ll}
C_4, &
\text{for}\;\;x\in[0,\alpha],\\
-C_1 {\tilde{f}} \sin\tilde{f}x+C_2 {\tilde{f}} \cos\tilde{f}x, &
\text{for}\;\;x\in[\alpha,\alpha+l],\\
 C_6, &
\text{for}\;\;x\in[\alpha+l,L].
\end{array}
\right.
\end{equation*}

We first consider the case where the Dirichlet boundary condition is applied at $x=0$ and $x=L$.
The boundary conditions of $\varphi_1$ at $0$ and $L$ lead to
$$
\begin{array}{l}
-\alpha_2 C_3=0,\quad
\beta_2 (C_5+C_6L)=0,
\end{array}
$$
which imply $C_3=0$ and $C_6=-C_5/L$.
Due to the continuous differentiability of $\varphi_1$ at $\alpha$ and $\alpha+l$, we infer that
\begin{equation}\label{RR1ap}
C_4 \alpha=C_1\cos\tilde{f}\alpha+C_2\sin\tilde{f}\alpha,
\end{equation}
\begin{equation}\label{RR3ap}
C_4=-C_1 {\tilde{f}}\sin\tilde{f}\alpha+C_2 {\tilde{f}} \cos\tilde{f}\alpha,
\end{equation}
\begin{equation}\label{RR2ap}
C_5-C_5(\alpha+l)/L=C_1\cos\tilde{f}(\alpha+l)+C_2\sin\tilde{f}(\alpha+l),
\end{equation}
\begin{equation}\label{RR4ap}
-C_5/L
=-C_1 {\tilde{f}}\sin\tilde{f}(\alpha+l)+C_2 {\tilde{f}}\cos\tilde{f}(\alpha+l).
\end{equation}
Then
$\eqref{RR1ap}\times\eqref{RR2ap}+\dfrac{\eqref{RR3ap}\times\eqref{RR4ap}}{\tilde{f}^2}$
leads to
\begin{equation}\label{RR01ap}
\begin{array}{l}
\D C_4 C_5(\alpha(1-\frac{\alpha+l}{L})-\frac{1}{L\tilde{f}^2})
= (C_1^2+C_2^2)\cos {\tilde{f}}l
\end{array}
\end{equation}
and $\eqref{RR1ap}\times\eqref{RR4ap}-\eqref{RR3ap}\times\eqref{RR2ap}$ leads to
\begin{equation}\label{RR02ap}
\begin{array}{l}
C_4C_5(-\frac{\alpha}{L}-(1-\frac{\alpha+l}{L}))
= -(C_1^2+C_2^2){\tilde{f}} \sin {\tilde{f}}l.
\end{array}
\end{equation}
By simplifying (\ref{RR01ap}) and (\ref{RR02ap}), we obtain
that $\lambda_1(\alpha,l)$ satisfies (\ref{DDtanequg0}).
Differentiating
 (\ref{DDtanequg0}) with respect $\alpha$ on two sides yields
$$
\begin{array}{rcl}
&&\sec^2 {\tilde{f}}l \cdot l \cdot
\D \frac{\lambda_{1\alpha}}{2\tilde{f}}
\\ &=&\D \frac{(L-l)\frac{\lambda_{1\alpha}}{2\tilde{f}}({\tilde{f}}^2\alpha(L-\alpha-l)-1)-(L-l){\tilde{f}}(2{\tilde{f}}\frac{\lambda_{1\alpha}}{2\tilde{f}}\alpha(L-\alpha-l)+{\tilde{f}}^2(L-\alpha-l)-{\tilde{f}}^2\alpha)}{T_2^2},
\end{array}
$$
which gives (\ref{DDlambdaalphag0}).

We next consider the case where the Neumann boundary condition is applied at $x=0$ and the Dirichlet boundary condition is applied at $x=L$.
The boundary conditions of $\varphi_1$ at $0$ and $L$ infer
$$
\begin{array}{l}
\alpha_1 C_4=0,\quad
\beta_2 (C_5+C_6L)=0,
\end{array}
$$
which imply $C_4=0$ and $C_6=-C_5/L$.
As $\varphi_1$ is continuously differentiable at $\alpha$ and $\alpha+l$, we have
\begin{equation}\label{RR1apND}
C_3=C_1\cos\tilde{f}\alpha+C_2\sin\tilde{f}\alpha,
\end{equation}
\begin{equation}\label{RR3apND}
0=-C_1 {\tilde{f}}\sin\tilde{f}\alpha+C_2 {\tilde{f}} \cos\tilde{f}\alpha,
\end{equation}
\begin{equation}\label{RR2apND}
C_5-C_5(\alpha+l)/L=C_1\cos\tilde{f}(\alpha+l)+C_2\sin\tilde{f}(\alpha+l),
\end{equation}
\begin{equation}\label{RR4apND}
-C_5/L
=-C_1 {\tilde{f}}\sin\tilde{f}(\alpha+l)+C_2 {\tilde{f}}\cos\tilde{f}(\alpha+l).
\end{equation}
Then
$\eqref{RR1apND}\times\eqref{RR2apND}+\dfrac{\eqref{RR3apND}\times\eqref{RR4apND}}{\tilde{f}^2}$
leads to
\begin{equation}\label{RR01apND}
\begin{array}{l}
\D C_3 C_5(1-\frac{\alpha+l}{L})
= (C_1^2+C_2^2)\cos {\tilde{f}}l
\end{array}
\end{equation}
and $\eqref{RR1apND}\times\eqref{RR4apND}-\eqref{RR3apND}\times\eqref{RR2apND}$ leads to
\begin{equation}\label{RR02apND}
\begin{array}{l}
-C_3C_5\frac{1}{L}
= -(C_1^2+C_2^2){\tilde{f}} \sin {\tilde{f}}l.
\end{array}
\end{equation}
By simplifying (\ref{RR01apND}) and (\ref{RR02apND}), we obtain
that $\lambda_1(\alpha,l)$ satisfies (\ref{NDtanequg0}).
Differentiating  (\ref{NDtanequg0})
 with respect $\alpha$ on two sides yields
$$
\begin{array}{rcl}
\sec^2 {\tilde{f}}l \cdot l \cdot
\frac{\lambda_{1\alpha}}{2\tilde{f}}=
\frac{1}{(L-\alpha-l)^2}\frac{1}{\tilde{f}}-\dfrac{1}{L-\alpha-l}\frac{\lambda_{1\alpha}}{2\tilde{f}^3},
\end{array}
$$
which implies (\ref{NDlambdaalphag0}) immediately.

\subsection{Proof of Lemma \ref{lambdaalphataneqnsgg0}}\label{applemmalambdaalphataneqnsgg0}

Assume that {\bf (H3)} is true, i.e.,  $g'(0)+\lambda_1(\alpha,l)>0.
$
We can write the general form of $\varphi_1$ on $[0,L]$ as
\begin{equation}\label{varphi1solutionapH3}
\varphi_1(x)=\left\{
\begin{array}{ll}
C_3\cos\tilde{g}x+C_4\sin\tilde{g}x, &
\text{for}\;\;x\in[0,\alpha],\\
C_1\cos\tilde{f}x+C_2\sin\tilde{f}x, &
\text{for}\;\;x\in[\alpha,\alpha+l],\\
C_5\cos\tilde{g}x+C_6\sin\tilde{g}x, &
\text{for}\;\;x\in[\alpha+l,L],
\end{array}
\right.
\end{equation}
where each $C_i$ ($i=1,\cdots,6$) is a constant.
Then it holds that
\begin{equation*}
\varphi_1'(x)=\left\{
\begin{array}{ll}
-C_3 {\tilde{f}} \sin\tilde{f}x+C_4 {\tilde{f}} \cos\tilde{f}x, &
\text{for}\;\;x\in[0,\alpha],\\
-C_1 {\tilde{f}} \sin\tilde{f}x+C_2 {\tilde{f}} \cos\tilde{f}x, &
\text{for}\;\;x\in[\alpha,\alpha+l],\\
 -C_5 {\tilde{f}} \sin\tilde{f}x+C_6 {\tilde{f}} \cos\tilde{f}x, &
\text{for}\;\;x\in[\alpha+l,L].
\end{array}
\right.
\end{equation*}

We first consider the case where the Dirichlet boundary condition is applied at $x=0$ and $x=L$.
By the boundary conditions of $\varphi_1$ at $0$ and $L$, we have
$$
\begin{array}{l}
C_3=0,\quad
C_5 \cos {\tilde{g}}L+C_6 \sin {\tilde{g}}L=0,
\end{array}
$$
which imply $C_6=-\cot {\tilde{g}}L \cdot C_5$.
Using the continuous differentiability of $\varphi_1$ at $\alpha$ and $\alpha+l$, we obtain
\begin{equation}\label{RR1H3apdd}
C_4\sin {\tilde{g}}\alpha=C_1\cos\tilde{f}\alpha+C_2\sin\tilde{f}\alpha,
\end{equation}
\begin{equation}\label{RR3H3apdd}
C_4 {\tilde{g}} \cos {\tilde{g}}\alpha=-C_1 {\tilde{f}}\sin\tilde{f}\alpha+C_2 {\tilde{f}} \cos\tilde{f}\alpha,
\end{equation}
\begin{equation}\label{RR2H3apdd}
C_5\cos {\tilde{g}}(\alpha+l)-\cot {\tilde{g}}L\cdot C_5 \sin {\tilde{g}}(\alpha+l)=C_1\cos\tilde{f}(\alpha+l)+C_2\sin\tilde{f}(\alpha+l),
\end{equation}
\begin{equation}\label{RR4H3apdd}
-C_5\tilde{g}\sin {\tilde{g}}(\alpha+l)-\cot {\tilde{g}}L\cdot C_5 {\tilde{g}} \cos {\tilde{g}}(\alpha+l)
=-C_1 {\tilde{f}}\sin\tilde{f}(\alpha+l)+C_2 {\tilde{f}}\cos\tilde{f}(\alpha+l).
\end{equation}
Then
$\eqref{RR1H3apdd}\times\eqref{RR2H3apdd}+\dfrac{\eqref{RR3H3apdd}\times\eqref{RR4H3apdd}}{\tilde{f}^2}$
leads to
\begin{equation}\label{RR01H3apdd}
\begin{array}{rl}
&C_4 C_5\Big[\sin {\tilde{g}}\alpha(\cos {\tilde{g}}(\alpha+l)-\cot {\tilde{g}}L\cdot \sin {\tilde{g}}(\alpha+l))-\D \frac{{\tilde{g}} \cos {\tilde{g}}\alpha(\tilde{g}\sin {\tilde{g}}(\alpha+l)-\cot {\tilde{g}}L\cdot  {\tilde{g}} \cos {\tilde{g}}(\alpha+l))}{{\tilde{f}}^2}\Big]\\
& =(C_1^2+C_2^2)\cos {\tilde{f}}l
\end{array}
\end{equation}
and $\eqref{RR1H3apdd}\times\eqref{RR4H3apdd}-\eqref{RR3H3apdd}\times\eqref{RR2H3apdd}$ leads to
\begin{equation}\label{RR02H3apdd}
\begin{array}{rl}
&-C_4 C_5(\sin {\tilde{g}}\alpha(\tilde{g}\sin {\tilde{g}}(\alpha+l)+\cot {\tilde{g}}L\cdot  {\tilde{g}} \cos {\tilde{g}}(\alpha+l))+{\tilde{g}}\cos {\tilde{g}}\alpha (\cos {\tilde{g}}(\alpha+l)-\cot {\tilde{g}}L\cdot C_5 \sin {\tilde{g}}(\alpha+l)))\\
& =-(C_1^2+C_2^2){\tilde{f}} \sin {\tilde{f}}l.
\end{array}
\end{equation}
By simplifying (\ref{RR01H3apdd}) and (\ref{RR02H3apdd}), we see that (\ref{DDtanequgg0}) holds.
Differentiating (\ref{DDtanequgg0})
 with respect $\alpha$ on two sides, we get (\ref{DDlambdaalphagg0}).

We next consider the case where the Neumann boundary condition is applied at $x=0$ and the Dirichlet boundary condition is applied at $x=L$.
In view of the boundary conditions of $\varphi_1$ at $0$ and $L$, we find $$
\begin{array}{l}
C_4=0,\quad
C_5 \cos {\tilde{g}}L+C_6 \sin {\tilde{g}}L=0,
\end{array}
$$
which imply $C_6=-\cot {\tilde{g}}L \cdot C_5$.
Thanks to the continuous differentiability of $\varphi_1$ at $\alpha$ and $\alpha+l$, we see
\begin{equation}\label{RR1H3apnd}
C_3\cos {\tilde{g}}\alpha=C_1\cos\tilde{f}\alpha+C_2\sin\tilde{f}\alpha,
\end{equation}
\begin{equation}\label{RR3H3apnd}
-C_3 {\tilde{g}} \sin {\tilde{g}}\alpha=-C_1 {\tilde{f}}\sin\tilde{f}\alpha+C_2 {\tilde{f}} \cos\tilde{f}\alpha,
\end{equation}
\begin{equation}\label{RR2H3apnd}
C_5\cos {\tilde{g}}(\alpha+l)-\cot {\tilde{g}}L\cdot C_5 \sin {\tilde{g}}(\alpha+l)=C_1\cos\tilde{f}(\alpha+l)+C_2\sin\tilde{f}(\alpha+l),
\end{equation}
\begin{equation}\label{RR4H3apnd}
-C_5\tilde{g}\sin {\tilde{g}}(\alpha+l)-\cot {\tilde{g}}L\cdot C_5 {\tilde{g}} \cos {\tilde{g}}(\alpha+l)
=-C_1 {\tilde{f}}\sin\tilde{f}(\alpha+l)+C_2 {\tilde{f}}\cos\tilde{f}(\alpha+l).
\end{equation}
Then
$\eqref{RR1H3apnd}\times\eqref{RR2H3apnd}+\dfrac{\eqref{RR3H3apnd}\times\eqref{RR4H3apnd}}{\tilde{f}^2}$
leads to
\begin{equation}\label{RR01H3apnd}
\begin{array}{rl}
&C_3 C_5\Big[\cos {\tilde{g}}\alpha(\cos {\tilde{g}}(\alpha+l)-\cot {\tilde{g}}L\cdot \sin {\tilde{g}}(\alpha+l))+\D\frac{{\tilde{g}} \sin {\tilde{g}}\alpha(\tilde{g}\sin {\tilde{g}}(\alpha+l)+\cot {\tilde{g}}L\cdot  {\tilde{g}} \cos {\tilde{g}}(\alpha+l))}{{\tilde{f}}^2}\Big]\\
& =(C_1^2+C_2^2)\cos {\tilde{f}}l,
\end{array}
\end{equation}
and $\eqref{RR1H3apnd}\times\eqref{RR4H3apnd}-\eqref{RR3H3apnd}\times\eqref{RR2H3apnd}$ leads to
\begin{equation}\label{RR02H3apnd}
\begin{array}{rl}
&C_3 C_5(-\cos {\tilde{g}}\alpha(\tilde{g}\sin {\tilde{g}}(\alpha+l)-\cot {\tilde{g}}L\cdot  {\tilde{g}} \cos {\tilde{g}}(\alpha+l))+{\tilde{g}}\sin {\tilde{g}}\alpha (\cos {\tilde{g}}(\alpha+l)-\cot {\tilde{g}}L\cdot C_5 \sin {\tilde{g}}(\alpha+l)))\\
& =-(C_1^2+C_2^2){\tilde{f}} \sin {\tilde{f}}l.
\end{array}
\end{equation}
By simplifying (\ref{RR01H3apnd}) and (\ref{RR02H3apnd}), we obtain (\ref{NDtanequgg0}).
Differentiating (\ref{NDtanequgg0})
 with respect $\alpha$ on two sides gives rise to (\ref{NDlambdaalphagg0}).

\section*{Acknowledgments} Y. Jin is supported in part by Simons Collaboration Grants for Mathematicians 713985, R. Peng is supported in part by NSFC grant 11671175, and J. F. Wang is supported in part by NSFC grant 11971135.

\end{document}